\documentclass{article}

\usepackage{amsmath}
\usepackage{amssymb}
\usepackage{amsthm}
\usepackage{enumerate}

\numberwithin{equation}{section}  

\newtheorem{lem}{Lemma}[section]
\newtheorem{thm}[lem]{Theorem}
\newtheorem{dfn}[lem]{Defination}
\newtheorem{rmk}[lem]{Remark}
\newtheorem{prp}[lem]{Proposition}
\newtheorem{cor}[lem]{Corollary}
\newtheorem{exm}[lem]{Example}

\begin{document}

\title{Unstability of pseudoharmonic maps between pseudo-Hermitian manifolds
\author{Tian Chong, Yuxin Dong and Yibin Ren}
\footnote{This work was supported by NSFC grant No. 11271071 and LMNS, Fudan.}}
\date{}
\maketitle
\begin{abstract}
In this paper, we derive the second variation formula of pseudoharmonic maps into any pseudo-Hermitian manifolds. When the target manifold is an isometric embedded CR manifold in complex Euclidean space or a pseudo-Hermitian immersed
submanifold in Heisenberg group, we give some conditions on Weingarten maps to obtain some unstability
of pseudoharmonic maps between these pseudo-Hermitian manifolds.
\end{abstract}

\section{Introduction}

\ \ \ \ As known to all, a harmonic map is a critical point of the energy integral. A harmonic map is called stable if it has nonnegative second variation, that is, the index of the map is 0. The stability problem is an important problem in the theory of harmonic maps. In [7], R.T. Smith estimated the index of the identity map of a Riemannian manifold, in particular, he showed that the identity map on $S^{m}$  is $m+1$. In [8], Y.L. Xin proved that for $m\geq3$, any nonconstant harmonic map $f:S^{m}\rightarrow N^n$ is unstable. A result of Leung [5] states that any nonconstant map from a compact Riemannian manifold to the sphere  is unstable too. In [4], the authors extended the Leung's result to the case that the target manifold is a compact immersed submanifold of Euclidean space.\\
\indent In recent years, some generalized harmonic maps have been introduced and investigated in various geometric backgrounds. For example, some pseudoharmonic maps were introduced in the field of pseudo-Hermitian geometry (cf. [1],[2],[6]). For a map $f:(M^{2m+1},H(M),J,\theta)\rightarrow(N,\widetilde{H}(N),\widetilde J,\widetilde{\theta})$ between two pseudo-Hermian maps, Petit [6] introduced a new horizontal energy functional $E_{H,\widetilde{H}}(f)$. He derived the first variation formula of $E_{H,\widetilde{H}}(f)$  and called a critical point of the energy a pseudoharmonic map. Note that there is an extra condition on the pull-back of the torsion on the target manifold. The second author [1] modified Petit's variational problem slightly by restricting the variational vector field to be horizontal. In [1], the critical point of the restricted variantional problem about   is refered to as a pseudoharmonic too. Among other results, the second author derived the second variation formula of pseudoharmonic maps into Sasakian manifold and proved that any nonconstant horizontal pseudoharmonic map from a closed pseudo-Hermitian manifold into the odd dimensional sphere is unstable.\\
\indent In this paper, we will extend their results to the case that the target manifold is an isometric embedded CR manifold or a pseudo-Hermitian immersed submanifold of Heisenberg group and give a low bound of the index of identity map $I:S^{2n+1}\rightarrow S^{2n+1}$ with $n\geq1$. Firstly, we derive the second variation formula of pseudoharmonic maps into any pseudo-Hermitian manifolds. Then we give a condition on Weingarten map which implies that there is no nonconstant horizontal pseudoharmonic map from a closed pseudo-Hermitian manifold into an isometric embbed CR manifold. Next we consider the identity map $I:S^{2n+1}\rightarrow S^{2n+1}$. From the above result we known that it is unstable.  Following the result of [7] we discuss the degree of the unstability and  derive that index($I$)$\geq$ 2n+2. In the end, we give a condition on CR Weingarten map which also implies there is no nonconstant pseudoharmonic map from a closed pseudo-Hermitian manifold into a pseudo-Hermitian immersed submanifold of Heisenberg group.
\section{Basic Notions}
\ \ \ \ We will follow mainly the notations and terminologies in [2], but a somewhat different wedge multiplication for forms.\\
\indent Let $M$ be a real ($2m+1$)-dimensional $C^{\infty}$ differentiable manifold. Let $TM \otimes \mathbb{C}$ be the complexified tangent bundle over $M$.
Let $T_{1,0}M\subseteq TM \otimes \mathbb{C}$ be a complex subbundle of complex rank $m$.
$T_{1,0}M$ is called an $CR$ structure on $M$, if $T_{1,0}M\cap T_{0,1}M=0 $ and $[T_{1,0}M,T_{1,0}M]\subseteq (T_{1,0}M)$.
Here $T_{0,1}M=\overline{T_{1,0}M}$.\\
\indent A pair ($M,T_{1,0}M$) consisting of a $C^{\infty}$ manifold and a $CR$ structure is a $CR$ manifold. The integer $m$ is the CR dimension. \\
\indent Its Levi distribution is the real rank $2m$ subbundle $H(M)\subseteq TM$ given by $H(M)=Re\{T_{1,0}M\oplus T_{0,1}M\}$. It carries the complex structure $J_{b}:H(M)\rightarrow H(M)$ given by $J_{b}(V+\overline{V})=\sqrt{-1}(V-\overline{V})$ for any $V\in\Gamma(T_{1,0}M)$.
\begin{dfn}\quad
Let ($M,T_{1,0}M$) and ($N,T_{1,0}N$) be two $CR$ manifolds. A $C^{\infty}$ map $f:M\rightarrow N$ is a CR map if
\begin{align}
(d_xf)(T_{1,0}M)_x\subseteq (T_{1,0}N)_{f(x)} ,  \nonumber
\end{align}
for any $x\in M$, where $d_{x}f$ or $(f_{*})_x$ is the differential of $f$ at $x$.
\end{dfn}
\indent There exists a global nonvanishing 1-form $\theta$ such that $ker(\theta_{x})\supseteq H(M)_x$ for any $x\in M$.
Any such section $\theta$ is referred to as a pseudo-Hermitian structure on $M$. Given a pseudo-Hermitian structure $\theta$ on $M$, the Levi-form $L_{\theta}$ is defined by $L_{\theta}(Z,\overline{W})=-\frac{\sqrt{-1}}{2}(d\theta)(Z,\overline{W})$
for any $Z,W \in \Gamma(T_{1,0}M)$.\\
\indent We can define the bilinear form $G_{\theta}$ by setting $G_{\theta}(X,Y)=\frac{1}{2}(d\theta)(X,J_{b}Y)$
for any $X,Y \in \Gamma(H(M))$. Since $L_{\theta}$ and (the $\mathbb{C}$ linear extension to $H(M)\otimes\mathbb{C}$ of) $G_{\theta}$ coincide on $H(M)\otimes H(M)$, then
$G_{\theta}(J_{b}X,J_{b}Y)=G_{\theta}(X,Y)$
for any $X,Y \in \Gamma(H(M))$. In particular, $G_{\theta}$ is symmetric.
\begin{dfn}\quad
Let ($M,T_{1,0}M$) be an orientable $CR$ manifold and $\theta$ a fixed pseudo-Hermitian structure on $M$. We call that $(M,T_{1,0}M)$ is a strictly pseudoconvex CR manifold if its Levi form $L_{\theta}$ is positive definite.
\end{dfn}
\begin{rmk}\quad
In this paper, we only consider the strictly pseudoconvex CR manifolds. ($M,H(M),J_{b},\theta$) is called a pseudo-Hermitian manifold, if ($M,T_{1,0}M$) is a strictly pseudoconvex $CR$ manifold and $\theta$ is the pseudo-Hermitian structure such that $L_{\theta}$ is positive definite.
\end{rmk}
If ($M,H(M),J_b,\theta$) is a pseudo-Hermitian manifold, there exists a unique globally defined nowhere zero tangent vector field $T$ on $M$, which transverse to $H(M)$, satisfying
\begin{align}
\theta(T)=1,T\lrcorner \ d \theta=0. \label{70}
\end{align}
The vector $T$ is referred to as the characteristic direction. Then we have the decomposition: $TM=H(M)\oplus RT$.
Using the decomposition we may extend $G_{\theta}$ to a Riemannian metric $g_{\theta}$ on $M$. Let $g_{\theta}$ be the Riemannian metric given by $g_{\theta}(X,Y)=G_{\theta}(\pi_{H}X,\pi_{H}Y)+\theta(X)\theta(Y)$ for any $X,Y\in \Gamma(H(M))$, where $\pi_{H}:TM\rightarrow H(M)$ is the projection associated with the direct sum decomposition. $g_{\theta}$ is called the Webster metric. In this paper we always write $g_{\theta}$ or $G_{\theta}$ as $ \langle , \rangle $ for simplicity.\\

\indent If $\nabla$ is a linear connection on $M$, we use $T_{\nabla}$ to denote its torsion field. A vector-valued 1-form $\tau$: $TM\rightarrow TM$ is defined by $\tau X=T_{\nabla}(T,X)$ for any $X\in\Gamma(TM)$.\\
\indent Let us extend $J_b$ to a $(1,1)$ tensor field $J$ on $M$ by requiring that $J T=0$.
Then ($M,H(M),J,\theta$) becomes a pseudo-Hermitian manifold. \\
\indent On a pseudo-Hermitian manifold, there is a canonical linear connection preserving the complex structure and the webster metric. Precisely, we have the following theorem:
\begin{thm}([2])\quad \label{80}
Let ($M,H(M),J,\theta$) be a pseudo-Hermitian manifold. Let $T$ be the characteristic direction and $J$ the complex structure in H(M). Let $g_{\theta}$ be the webster metric. There is a unique linear connection on $M$(called the Tanaka-Webster connection) such that
\begin{enumerate}[(i)]
\item The Levi distribution H(M) is parallel with respect to $\nabla$.
\item $\nabla J=0,\nabla g_{\theta}=0$.
\item The torsion $T_{\nabla}$ of $\nabla$ satisfies the following conditions:
\begin{align}
T_{\nabla}(Z,W)=0,    \label{73}
\end{align}
\begin{align}
T_{\nabla}(Z,\overline{W})=2\sqrt{-1}L_{\theta}(Z,\overline{W})T, \label{74}
\end{align}
\begin{align}
\tau J+J \tau=0     \label{75}
\end{align}
for any $Z,W \in \Gamma(T_{1,0}M)$.
\end{enumerate}
\end{thm}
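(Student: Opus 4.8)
The plan is to prove existence and uniqueness separately, in the spirit of the construction of the Chern connection on a Hermitian manifold. The key preliminary remark is that, granting (i), the torsion conditions \eqref{73}--\eqref{74} are equivalent to saying that on horizontal vectors the torsion is purely vertical and equals $d\theta$: for $X,Y\in\Gamma(H(M))$ one has $\theta(T_\nabla(X,Y))=-\theta([X,Y])=d\theta(X,Y)$ because $\theta$ kills $\nabla_XY,\nabla_YX\in\Gamma(H(M))$, and $\pi_H(T_\nabla(X,Y))=0$ follows from \eqref{73}--\eqref{74} after decomposing $X,Y$ into their $(1,0)$ and $(0,1)$ parts, so $T_\nabla(X,Y)=d\theta(X,Y)\,T$. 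Dually, $T\lrcorner d\theta=0$ and $\theta(T)=1$ give $\mathcal L_T\theta=0$ and $\mathcal L_T(d\theta)=0$; in particular $\theta([T,Y])=-d\theta(T,Y)=0$, so $[T,Y]\in\Gamma(H(M))$ whenever $Y\in\Gamma(H(M))$. Both observations will be used repeatedly.

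For uniqueness, suppose $\nabla$ satisfies (i)--(iii). From $\nabla g_\theta=0$ we get $\langle\nabla_XT,T\rangle=\tfrac12X\langle T,T\rangle=0$, while $\nabla J=0$ with $JT=0$ gives $J(\nabla_XT)=\nabla_X(JT)=0$, hence $\nabla_XT\in RT\cap T^{\perp}=\{0\}$; thus $\nabla T=0$. Next, for $X,Y,Z\in\Gamma(H(M))$ property (i) gives $\nabla_XY\in\Gamma(H(M))$, so $\nabla_XY$ is known once the numbers $\langle\nabla_XY,Z\rangle$ are; since $\langle T_\nabla(X,Y),Z\rangle=d\theta(X,Y)\langle T,Z\rangle=0$, the standard cyclic-sum manipulation for a metric connection with prescribed torsion yields the Koszul-type identity
\begin{align}
2\langle\nabla_XY,Z\rangle=X\langle Y,Z\rangle+Y\langle Z,X\rangle-Z\langle X,Y\rangle+\langle[X,Y],Z\rangle-\langle[X,Z],Y\rangle-\langle[Y,Z],X\rangle, \nonumber
\end{align}
in which only $g_\theta$ and Lie brackets occur. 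Finally, for $Y\in\Gamma(H(M))$ we have $\nabla_TY=[T,Y]+\tau Y$ with $\tau X=T_\nabla(T,X)$; applying $\nabla J=0$ along $T$ gives $\tau(JY)-J\tau Y=-(\mathcal L_TJ)Y$, which combined with \eqref{75} forces $2J\tau Y=(\mathcal L_TJ)Y$, i.e.\ $\tau=-\tfrac12J(\mathcal L_TJ)$ on $H(M)$ and $\tau T=0$; this is automatically consistent with $\nabla g_\theta=0$ since $\mathcal L_T(d\theta)=0$ makes $(\mathcal L_Tg_\theta)|_{H(M)}$ the symmetric bilinear form attached to $\tau$. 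Hence $\nabla_T$ is determined on $H(M)$ too, and at most one connection satisfies (i)--(iii).

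For existence I would promote these formulas into a definition: set $\nabla T=0$; for $X,Y\in\Gamma(H(M))$ let $\nabla_XY$ be the section of $H(M)$ given by the displayed Koszul identity (whose right-hand side is $C^\infty(M)$-linear in $X$ and a derivation in $Y$); set $\nabla_TY=[T,Y]-\tfrac12J(\mathcal L_TJ)Y$ for $Y\in\Gamma(H(M))$; and extend to $TM$ using $TM=H(M)\oplus RT$, the relation $\nabla T=0$, and the Leibniz rule. Then (i) is built in; for (ii), $\nabla g_\theta=0$ on $H(M)\times H(M)$ is the usual consequence of the Koszul identity, $\nabla J=0$ there is an algebraic identity that holds because $g_\theta(J\cdot,J\cdot)=g_\theta$ and $[T_{1,0}M,T_{1,0}M]\subseteq T_{1,0}M$, and in the $T$-directions both reduce to the relations that defined $\tau$; for (iii), \eqref{73}--\eqref{74} merely restate the horizontal-torsion normalization, while $\tau J+J\tau=0$ follows from $(\mathcal L_TJ)J+J(\mathcal L_TJ)=0$, obtained by differentiating $J^2=-\mathrm{id}$ along $T$. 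An equivalent route, probably closer to the reference [2], is to work in a local admissible coframe $\{\theta^\alpha,\theta^{\bar\alpha},\theta\}$ with $d\theta=2\sqrt{-1}\,g_{\alpha\bar\beta}\,\theta^\alpha\wedge\theta^{\bar\beta}$, invoke Cartan's lemma to get unique $1$-forms $\omega^\alpha_\beta$ and $\tau^\alpha=A^\alpha_{\bar\beta}\theta^{\bar\beta}$ with $d\theta^\alpha=\theta^\beta\wedge\omega^\alpha_\beta+\theta\wedge\tau^\alpha$ and $dg_{\alpha\bar\beta}=\omega_{\alpha\bar\beta}+\omega_{\bar\beta\alpha}$, and set $\nabla T_\alpha=\omega^\beta_\alpha\otimes T_\beta$, $\nabla T=0$.

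I expect the main obstacle to lie in the existence half rather than the uniqueness half. Proving $\nabla J=0$ on $H(M)\times H(M)$ for the Koszul-defined connection genuinely uses integrability of the CR structure (in the coframe picture this is hidden in the absence of $\theta^\alpha\wedge\theta^\beta$ terms in $d\theta^\alpha$), and checking $\nabla g_\theta=0$ and all the torsion identities in the mixed $T$-directions requires careful bookkeeping of $\pi_H$, of $\mathcal L_TJ$, and of signs. In the coframe picture the analogous point is that the metric normalization $dg_{\alpha\bar\beta}=\omega_{\alpha\bar\beta}+\omega_{\bar\beta\alpha}$ together with the prescribed shape of $d\theta^\alpha$ has $\omega^\alpha_\beta$ as its unique solution --- this is precisely where \eqref{75} is used, in the guise that $\tau^\alpha$ carries no $\theta^\beta$ or $\theta$ term --- and that the outcome does not depend on the chosen admissible coframe.
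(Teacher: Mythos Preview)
The paper does not prove this theorem at all: it is stated as a citation from reference~[2] (Dragomir--Tomassini), with no argument given in the paper beyond the attribution ``([2])''. So there is no ``paper's own proof'' to compare against.

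That said, your proposal is a correct and standard route to the result, essentially the one in~[2]. The uniqueness argument is clean: deducing $\nabla T=0$ from $\nabla J=0$ and $\nabla g_\theta=0$, then the Koszul formula on $H(M)\times H(M)$ (using that the horizontal torsion is purely vertical so contributes nothing against horizontal $Z$), and finally pinning down $\tau=-\tfrac12 J(\mathcal L_TJ)$ from $\nabla J=0$ along $T$ together with \eqref{75}. For existence your two suggested routes are both the textbook ones; the admissible-coframe construction with Cartan's lemma is exactly what Dragomir--Tomassini do, and your diagnosis of where the work lies---integrability for $\nabla J=0$ on $H(M)$, and the torsion normalization forcing $\tau^\alpha$ to have only $\theta^{\bar\beta}$ components---is accurate. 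One small point worth making explicit in the Koszul step: since $[X,Y]$ need not be horizontal, the brackets appearing in the right-hand side are paired with horizontal vectors via the full Webster metric $g_\theta$, and the identity you wrote is literally the Levi--Civita Koszul formula restricted to $H(M)$; the reason it computes $\nabla$ rather than $\nabla^\theta$ is precisely that the torsion correction $\langle T_\nabla(\cdot,\cdot),Z\rangle$ vanishes for horizontal $Z$.
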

By the Theorem \ref{80} we may conclude that $\nabla T=0\ and\  \nabla \theta=0.$ The vector-valued 1-form $\tau$ on $M$ is called the pseudo-Hermitian torsion of $\nabla$.
\begin{rmk}\quad
The pseudo-Hermitian torsion $\tau$ is $H(M)$-valued. It is self-adjoint with respect to $g_{\theta}$ and   $trace\ \tau=0$ (cf.[2]. page37).\\
\indent Let us set $A(X,Y)=g_{\theta}(\tau X,Y)$. Then $A(X,Y)=A(Y,X)$. In particular, ($M,H(M),J,\theta$) is called a Sasakian manifold, if the pseudo-Hermitian torsion $\tau$ is zero.
\end{rmk}
Since $g_{\theta}$ is a Riemannian metric on $M$, then there exists the Levi-Civita connection of ($M,g_{\theta}$) denoted by $\nabla^{\theta}$. We have the following lemma:
\begin{lem}([2])\quad \label{81}
Let ($M,H(M),J,\theta$) be a pseudo-Hermitian manifold. Let $\nabla$ be the Tanaka-Webster connection. Then the torsion tensor field $T_{\nabla}$ of $\nabla$ is given by
\begin{align}
T_{\nabla}=\theta \wedge \tau+d \theta \otimes T. \label{72}
\end{align}
Moreover, the Levi-Civita connection $\nabla^{\theta}$ of ($M,g_{\theta}$) is related $\nabla$ by
\begin{align}
\nabla^{\theta}=\nabla-(\frac{1}{2}d\theta+A)\otimes T+\tau \otimes \theta+2\theta \odot J. \label{77}
\end{align}
 Here $\odot$ denotes the symmetric tensor product. For instance, $2(\theta \odot J)(X,Y)=\theta(X)JY+\theta(Y)JX $ for $\forall X,Y \in\Gamma(TM)$.
\end{lem}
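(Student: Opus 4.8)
The plan is to establish the two identities \eqref{72} and \eqref{77} separately, in each case reducing to a pointwise check that uses the splitting $TM=H(M)\oplus\mathbb{R}T$ and the characterizing properties of the Tanaka--Webster connection from Theorem \ref{80}.

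For the torsion formula \eqref{72}, I would first observe that both sides are skew-symmetric $TM$-valued $2$-forms, so by bilinearity it suffices to evaluate them on three kinds of arguments. (a) On horizontal $X,Y\in\Gamma(H(M))$: the right-hand side of \eqref{72} collapses to $(d\theta\otimes T)(X,Y)=d\theta(X,Y)\,T$ since $\theta|_{H(M)}=0$; on the left, decomposing $X,Y$ into their $T_{1,0}M$- and $T_{0,1}M$-parts and invoking \eqref{73}, \eqref{74}, the integrability $[T_{1,0}M,T_{1,0}M]\subseteq T_{1,0}M$ (which makes $d\theta$ vanish on $T_{1,0}M\times T_{1,0}M$ and on $T_{0,1}M\times T_{0,1}M$), together with the identity $d\theta(Z,\overline W)=2\sqrt{-1}\,L_\theta(Z,\overline W)$ coming from the definition of $L_\theta$, one recovers $T_\nabla(X,Y)=d\theta(X,Y)\,T$. (b) On $T$ and a horizontal $Y$: the right-hand side is $\theta(T)\tau Y-\theta(Y)\tau T+d\theta(T,Y)\,T$, which equals $\tau Y$ using $\theta(T)=1$, $\theta(Y)=0$ and $T\lrcorner d\theta=0$ from \eqref{70}, while the left-hand side is $T_\nabla(T,Y)=\tau Y$ by the very definition of $\tau$. (c) On $T,T$ both sides vanish by skew-symmetry. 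This proves \eqref{72}.

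For \eqref{77}, write $S$ for the difference tensor $-(\tfrac12 d\theta+A)\otimes T+\tau\otimes\theta+2\theta\odot J$ appearing on the right and set $\widetilde\nabla:=\nabla+S$. Since the Levi--Civita connection $\nabla^{\theta}$ is the unique torsion-free connection compatible with $g_\theta$, it suffices to verify that $\widetilde\nabla$ enjoys both properties. For metric compatibility: because $\nabla g_\theta=0$, this reduces to showing that $Z\mapsto\langle S(X,Y),Z\rangle$ is skew-symmetric under the interchange of $Y$ and $Z$; expanding $\langle S(X,Y),Z\rangle$ with $\langle T,Z\rangle=\theta(Z)$, $\langle\tau X,Z\rangle=A(X,Z)$, the $g_\theta$-skew-symmetry of $J$, the symmetry of $A$, and—crucially—the identity $\langle JX,Y\rangle=\tfrac12 d\theta(X,Y)$ (which follows from $G_\theta(X,Y)=\tfrac12 d\theta(X,J_bY)$ together with the $J$-invariance of $G_\theta$, and extends to all of $TM$ since $JT=0$ and $T\lrcorner d\theta=0$), the symmetrization in $Y,Z$ cancels term by term. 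For torsion-freeness: the torsion of $\widetilde\nabla$ is $T_\nabla(X,Y)+S(X,Y)-S(Y,X)$, and substituting \eqref{72} for $T_\nabla$ and using that $d\theta$ is skew while $A$ is symmetric, the antisymmetrized part of $S$ equals $-d\theta(X,Y)\,T+\theta(Y)\tau X-\theta(X)\tau Y$, which is precisely $-T_\nabla(X,Y)$. Hence $\widetilde\nabla=\nabla^{\theta}$, proving \eqref{77}.

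The main obstacle is not conceptual but bookkeeping: one must be careful with the paper's nonstandard wedge convention and with the precise meaning, as connection-difference tensors, of $\tau\otimes\theta$ and $2\theta\odot J$ (the latter given by $2(\theta\odot J)(X,Y)=\theta(X)JY+\theta(Y)JX$), and above all one must track the sign conventions relating $J$, $G_\theta$ and $d\theta$, since every cancellation in the verification of \eqref{77} hinges on them. Once the conventions are fixed, the only genuine inputs are Theorem \ref{80}, the definitions of $L_\theta$ and $G_\theta$, and the structure equations $\theta(T)=1$, $T\lrcorner d\theta=0$; everything else is a short, mechanical computation.
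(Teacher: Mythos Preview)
Your approach is correct, but note that the paper does not actually supply a proof of this lemma: it is stated with the citation ``([2])'' (Dragomir--Tomassini) and used as background input, so there is no in-paper argument to compare against. The verification you outline---checking \eqref{72} by evaluating both sides on the three types of pairs determined by the splitting $TM=H(M)\oplus\mathbb{R}T$, and establishing \eqref{77} by showing that $\nabla+S$ is torsion-free and $g_\theta$-compatible, hence equals the Levi--Civita connection by uniqueness---is exactly the standard argument one finds in that reference, and your bookkeeping (in particular the identity $\langle JX,Y\rangle=\tfrac12\,d\theta(X,Y)$ and the cancellations in the symmetrization/antisymmetrization of $S$) is accurate.
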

\begin{exm}(Heisenberg group)\quad. \label{92}
The Heisenberg group $\mathbf{H}_{n}$ is obtained by $\mathbf{C}^n\times \mathbf{R}$ with the group law
\begin{align}
(z,t)\cdot(w,s)=(z+w,t+s+2Im \langle z,w \rangle ). \nonumber
\end{align}
Here $(z,t)=(z^1=x^1+y^1,\cdots,z^n=x^n+y^n,t)$ is the natural coordinates.\\

\indent Let us consider the complex vector fields on $\mathbf{H}_{n}$,
\begin{align}
T_{\alpha}=\frac{\partial}{\partial z^{\alpha}}+\sqrt{-1}\overline{z^{\alpha}}\frac{\partial}{\partial t},\ \  \alpha=1,\cdots,n .\nonumber
\end{align}
Here $\frac{\partial}{\partial z^{\alpha}}=\frac{1}{2}(\frac{\partial}{\partial x^{\alpha}}-\sqrt{-1}\frac{\partial}{\partial y^{\alpha}})$ and $z^{\alpha}=x^{\alpha}+y^{\alpha}$. The $CR$ structure $T_{1,0}\mathbf{H}_{n}$ is spanned by $\{T_1,\cdots,T_n\}$. There is a pseudo-Hermitian structure $\theta$ on $\mathbf{H}_n$ defined by
\begin{align}
\theta=dt+2\sum_{i=1}^{n}(x_idy_i-y_idx_i) \nonumber
\end{align}
such that ($\mathbf{H}_n,H(\mathbf{H}),J,\mathbf{\theta}$) becomes a pseudo-Hermitian manifold. Moreover, it is a Sasakian manifold. See [2] for details.
\end{exm}
Now let us discuss the divergence of a vector field on a pseudo-Hermitian manifold. \\
\indent For a vector field $X\in\Gamma(TM)$, the divergence $div(X)$ can be locally computed as:
\begin{align}
div(X)=trace_{g_{\theta}}(\nabla^{\theta}X)=\sum_{\lambda=1}^{2m} \langle \nabla^{\theta}_{e_{\lambda}}X,e_{\lambda} \rangle + \langle \nabla^{\theta}_{T}X,T \rangle ,
\end{align}
where $\{e_{\lambda}\}_{\lambda=1}^{2m}$ is a local orthonormal frame of $H(M)$.\\
\indent Using (\ref{77}) and $\nabla^{\theta}g_{\theta}=0$, we have
\begin{align}
&divX=\sum_{\lambda=1}^{2m}e_{\lambda} \langle X,e_{\lambda} \rangle -\sum_{\lambda=1}^{2m} \langle X,\nabla_{e_{\lambda}}^{\theta}e_{\lambda} \rangle +T(\theta(X)) \nonumber \\
&=\sum_{\lambda=1}^{2m}e_{\lambda} \langle X,e_{\lambda} \rangle -\sum_{\lambda=1}^{2m} \langle X,\nabla_{e_{\lambda}}e_{\lambda} \rangle +(\frac{1}{2}d\theta+A)(e_{\lambda},e_{\lambda})\theta(X)
+T(\theta(X)) \nonumber
\end{align}
Since trace$\tau=0$, then
\begin{align}
divX=\sum_{\lambda=1}^{2m} \langle \nabla_{e_{\lambda}}X,e_{\lambda} \rangle +T(\theta(X)).\label{78}
\end{align}
In particular, for $X\in\Gamma(H(M))$ the identity (\ref{78}) becomes
\begin{align}
divX=\sum_{\lambda=1}^{2m} \langle \nabla_{e_{\lambda}}X,e_{\lambda} \rangle   \label{90}
\end{align}

\section{Pseudoharmonic map}
\ \ \ \ Assume that $(M,H(M),J,\theta)$ and $(N,\widetilde{H}(N),\widetilde{J},\widetilde{\theta})$ are two pseudo-Hermitian manifolds and M is closed, $dim_{R}M=2m+1\; and \; dim_{R}N=2n+1$. \\
\indent For any smooth map $f:M \rightarrow N$
Petit([6]) introduced the following horizontal energy
$E_{H,\widetilde{H}}(f)=\frac{1}{2}\int_{M}|df_{H,\widetilde{H}}|^{2}dv_{g_{\theta}}$.\ Here $df_{H,\widetilde{H}}=\pi_{\widetilde{H}} \circ df\circ i_{H}$, $\pi_{\widetilde{H}}:TN\rightarrow \widetilde{H} (N)$ is the natural projection and $i _H:H(M)\rightarrow TM$ is the natural inclusion. \\
\indent Let $\nabla$ and $\widetilde {\nabla}$ be the Tanaka-Webster connections on M and N respectively. According to [6], we may define the second fundamental form with respect to the data ($\nabla,\widetilde{\nabla}$)
\begin{align}
\beta(X,Y)=(\widetilde{\nabla}_{X}df)(Y)=\widetilde{\nabla}_X df(Y)-df(\nabla_X Y). \nonumber
\end{align}
Here we also use $\widetilde{\nabla}$ to denote the pull-back connection in $f^{-1}TN$. \\
\indent In [6], Petit derived the first variation formula of the energy $E_{H,\widetilde H}$ and call a critical point of $E_{H,\widetilde H}$ a pseudoharmonic map. Note that there is an extra condition on the pull-back of the pseudo-Hermitian torsion. Dong([1]) modified the variational problem slightly and considered the restricted variational problem by requiring the variational vector fields to be horizontal. Then we have
\begin{prp}\label{97} ([1],cf also [6])\quad
For any horizontal vector field $\nu \in \Gamma(f^{-1} \widetilde{H}(N))$, let $\{f_{t}\}(|t|<\varepsilon)$ with $f_{0}=f$ and $\nu= \frac{\partial f_t}{\partial t}|_{t=0}$ be a one parameter variation . Then we have
\begin{align}
\frac{d E_{H,\widetilde{H}}(f_t)}{d t} |_{t=0}=-\int_{M} \langle \nu,\tau_{H,\widetilde{H}} (f) \rangle dv_{\theta}, \nonumber
\end{align}
where
\begin{align}
\; \tau_{H,\widetilde{H}}(f)= tr_{G_{\theta}}({\beta_{H,\widetilde{H}}+(f^* \widetilde{\theta} \otimes f^*\widetilde{\tau})_{H,\widetilde{H}}}); \nonumber
\end{align}
\begin{align}
\beta_{H,\widetilde{H}}=\pi _ {\widetilde{H}} \circ \beta |_{H(M)\times H(M)}; \nonumber
\end{align}
\begin{align}
(f^* \widetilde{\theta} \otimes f^* \widetilde{\tau})_{H,\widetilde{H}}=\pi_{\widetilde{H}}[f^* \widetilde{\theta} \otimes f^* \widetilde{\tau}]|_{H(M)\times H(M)}. \nonumber
\end{align}
We call $\tau _{H,\widetilde{H}}$ the pseudo-tension field of f.
\end{prp}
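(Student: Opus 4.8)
\noindent\emph{Proof proposal.}\quad The plan is to run the classical first--variation argument for the energy, but carrying along the torsion of the Tanaka--Webster connection $\widetilde\nabla$ of $N$; this torsion is exactly what produces the extra term $tr_{G_{\theta}}(f^{*}\widetilde\theta\otimes f^{*}\widetilde\tau)_{H,\widetilde H}$. Put $F\colon M\times(-\varepsilon,\varepsilon)\to N$, $F(x,t)=f_{t}(x)$, extend a local $g_{\theta}$--orthonormal frame $\{e_{a}\}_{a=1}^{2m}$ of $H(M)$ to $M\times(-\varepsilon,\varepsilon)$ so that $[\partial_{t},e_{a}]=0$, and let $\widetilde\nabla$ also denote the connection induced on $F^{-1}TN$. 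Since $E_{H,\widetilde H}(f_{t})=\frac12\int_{M}\sum_{a}\langle\pi_{\widetilde H}dF(e_{a}),\pi_{\widetilde H}dF(e_{a})\rangle\,dv_{\theta}$, differentiating under the integral and using that $\widetilde H(N)$ is $\widetilde\nabla$--parallel (Theorem \ref{80}(i), so $\pi_{\widetilde H}$ commutes with $\widetilde\nabla$) together with $\widetilde\nabla\widetilde{g}_{\theta}=0$ gives
\[
\frac{d}{dt}E_{H,\widetilde H}(f_{t})=\int_{M}\sum_{a}\big\langle \pi_{\widetilde H}\widetilde\nabla_{\partial_{t}}dF(e_{a}),\,\pi_{\widetilde H}dF(e_{a})\big\rangle\,dv_{\theta}.
\]
The crucial input is the commutation rule for a pull--back connection with torsion: since $[\partial_{t},e_{a}]=0$, one has $\widetilde\nabla_{\partial_{t}}dF(e_{a})=\widetilde\nabla_{e_{a}}dF(\partial_{t})+T_{\widetilde\nabla}(dF(\partial_{t}),dF(e_{a}))$, which at $t=0$ (where $dF(\partial_{t})=\nu$ and $dF(e_{a})=df(e_{a})$) turns the integrand into $\sum_{a}\langle\widetilde\nabla_{e_{a}}\nu+\pi_{\widetilde H}T_{\widetilde\nabla}(\nu,df(e_{a})),\,\pi_{\widetilde H}df(e_{a})\rangle$.

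Next I treat the two summands separately. For the first, introduce the horizontal vector field $W=\sum_{a}\langle\nu,\pi_{\widetilde H}df(e_{a})\rangle e_{a}$ on $M$; expanding $div\,W$ by (\ref{90}), using $\widetilde\nabla_{e_{a}}\nu=\widetilde\nabla_{e_{a}}(\pi_{\widetilde H}\nu)$ (as $\nu$ is horizontal), metric compatibility of $\widetilde\nabla$, and $\beta(e_{a},e_{a})=\widetilde\nabla_{e_{a}}df(e_{a})-df(\nabla_{e_{a}}e_{a})$, the $\nabla_{e_{a}}e_{a}$--terms cancel and one obtains the pointwise identity
\[
div\,W=\sum_{a}\langle\widetilde\nabla_{e_{a}}\nu,\pi_{\widetilde H}df(e_{a})\rangle+\big\langle\nu,\,tr_{G_{\theta}}\beta_{H,\widetilde H}\big\rangle ,
\]
so that integrating over the closed manifold $M$ removes $div\,W$ and leaves the contribution $-\int_{M}\langle\nu,tr_{G_{\theta}}\beta_{H,\widetilde H}\rangle\,dv_{\theta}$. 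For the second summand, apply Lemma \ref{81} on $N$, namely $T_{\widetilde\nabla}=\widetilde\theta\wedge\widetilde\tau+d\widetilde\theta\otimes\widetilde T$. Since $\nu$ is horizontal, $\widetilde\theta(\nu)=0$, so $\pi_{\widetilde H}T_{\widetilde\nabla}(\nu,df(e_{a}))=-(f^{*}\widetilde\theta)(e_{a})\,\widetilde\tau(\nu)$ (the $d\widetilde\theta\otimes\widetilde T$ part is vertical, hence killed by $\pi_{\widetilde H}$, and $\widetilde\tau$ is $\widetilde H(N)$--valued); using moreover that $\widetilde\tau$ is self--adjoint and $\widetilde\tau\widetilde T=0$ yields
\[
\sum_{a}\big\langle\pi_{\widetilde H}T_{\widetilde\nabla}(\nu,df(e_{a})),\pi_{\widetilde H}df(e_{a})\big\rangle=-\big\langle\nu,\,tr_{G_{\theta}}(f^{*}\widetilde\theta\otimes f^{*}\widetilde\tau)_{H,\widetilde H}\big\rangle .
\]
Adding the two contributions gives the asserted formula with $\tau_{H,\widetilde H}(f)=tr_{G_{\theta}}(\beta_{H,\widetilde H}+(f^{*}\widetilde\theta\otimes f^{*}\widetilde\tau)_{H,\widetilde H})$.

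The step I expect to be the main obstacle is not the divergence manipulation, which is routine, but the careful handling of the torsion: the Tanaka--Webster connection is not torsion--free, so, unlike the Riemannian harmonic--map case, the torsion term in the commutation rule cannot be discarded and must be rewritten through Lemma \ref{81}; along the way one must repeatedly invoke that $\widetilde H(N)$ is $\widetilde\nabla$--parallel, $\widetilde\nabla\widetilde T=0$, $\widetilde\tau\widetilde T=0$, and that $\widetilde\tau$ is self--adjoint and $\widetilde H(N)$--valued, in order to make sure every insertion of $\pi_{\widetilde H}$ is harmless and that the surviving terms pair correctly against $\nu$. The interchange of $\tfrac{d}{dt}$ with $\int_{M}$ is justified by smoothness of $F$ and compactness of $M$, and the divergence theorem is applied in the form $\int_{M}div\,W\,dv_{\theta}=0$, valid by (\ref{90}) and the definition of $div$ in Section 2.
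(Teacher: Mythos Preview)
Your argument is correct and follows essentially the same route the paper takes (the proposition is cited from [1], but the paper reproduces exactly this computation at the start of Section~4): differentiate under the integral, use the commutation identity with the Tanaka--Webster torsion (your torsion formula via Lemma~\ref{81} is the content of Lemma~\ref{82}/equation~(\ref{83}) in the paper), then integrate by parts with the horizontal divergence identity~(\ref{90}). The only cosmetic difference is that the paper packages the swap of $\widetilde\nabla_{\partial_t}$ and $\widetilde\nabla_{e_a}$ through Lemma~\ref{82} rather than invoking $T_{\widetilde\nabla}=\widetilde\theta\wedge\widetilde\tau+d\widetilde\theta\otimes\widetilde T$ directly, but the manipulations and the use of $\widetilde\theta(\nu)=0$, $\widetilde\tau\widetilde T=0$, and self--adjointness of $\widetilde\tau$ are identical.
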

\begin{dfn}([1])\quad \label{99}
A $C^{\infty}$ map $f:(M,H(M),J,\theta)\rightarrow(N,\widetilde{H}(N),\widetilde{J},\widetilde{\theta})$ is called a pseudoharmonic map if it is a critical point of $E_{H,\widetilde{H}}$ for any horizontal vector field $\nu \in \Gamma(f^{-1}\widetilde{H}(N))$.
\end{dfn}
\begin{cor}([1])\quad
Let $f:(M,H(M),J,\theta)\rightarrow (N,\widetilde H(N),\widetilde J,\widetilde{\theta})$ be a $C^{\infty}$ map. Then $f$ is pseudoharmonic if and only if $\tau_{H,\widetilde H}=0$, that is,
\begin{align}
tr_{G_{\theta}}\{\beta_{H,\widetilde H}(f)+[(f^*\widetilde{\theta})\otimes(f^{*}\widetilde{\tau})]_{H,\widetilde H}\}=0 . \nonumber
\end{align}
\end{cor}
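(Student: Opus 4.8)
The plan is to read the corollary off directly from the first variation formula in Proposition \ref{97} together with the fundamental lemma of the calculus of variations. By Definition \ref{99}, $f$ is pseudoharmonic precisely when $\frac{d}{dt}E_{H,\widetilde H}(f_t)|_{t=0}=0$ for every one-parameter variation $\{f_t\}$ whose variation field $\nu=\frac{\partial f_t}{\partial t}|_{t=0}$ is a horizontal section of $f^{-1}\widetilde H(N)$. Proposition \ref{97} rewrites this derivative as $-\int_M\langle\nu,\tau_{H,\widetilde H}(f)\rangle\,dv_\theta$, so pseudoharmonicity is equivalent to the vanishing of $\int_M\langle\nu,\tau_{H,\widetilde H}(f)\rangle\,dv_\theta$ for all such $\nu$.

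First I would dispose of the easy implication: if $\tau_{H,\widetilde H}(f)=0$, the integral vanishes for every $\nu$, hence $f$ is pseudoharmonic. For the converse, I would note that $\tau_{H,\widetilde H}(f)=tr_{G_\theta}\big(\beta_{H,\widetilde H}+(f^*\widetilde\theta\otimes f^*\widetilde\tau)_{H,\widetilde H}\big)$ is, by construction, a smooth section of $f^{-1}\widetilde H(N)$, since each summand lies in the image of $\pi_{\widetilde H}$. Because $M$ is closed, this section is admissible as a variation field: one may take $f_t=\widetilde{\exp}_f\big(t\,\tau_{H,\widetilde H}(f)\big)$, where $\widetilde{\exp}$ is the exponential map of the Webster metric $g_{\widetilde\theta}$ on $N$ (well defined for $|t|$ small by compactness of $M$), whose variation field is exactly $\tau_{H,\widetilde H}(f)$. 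Substituting $\nu=\tau_{H,\widetilde H}(f)$ into the identity above gives $\int_M|\tau_{H,\widetilde H}(f)|^2\,dv_\theta=0$, and since $\langle\,,\,\rangle=G_{\widetilde\theta}$ is positive definite on $\widetilde H(N)$ by strict pseudoconvexity of $N$, the integrand is nonnegative; therefore $\tau_{H,\widetilde H}(f)\equiv0$.

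Finally, the displayed equation in the statement is merely $\tau_{H,\widetilde H}(f)=0$ after inserting the definition of the pseudo-tension field from Proposition \ref{97}, so no further computation is required. The only point deserving a word of care is the admissibility of $\tau_{H,\widetilde H}(f)$ as a variation field — that a horizontal section of $f^{-1}\widetilde H(N)$ can be integrated to a genuine one-parameter family of maps — but this is a routine compactness argument already implicit in the variational setup of [1] and [6], so I do not expect a real obstacle here.
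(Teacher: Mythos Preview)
Your argument is correct and is the standard route: combine Proposition \ref{97} with the fundamental lemma of the calculus of variations, taking $\nu=\tau_{H,\widetilde H}(f)$ itself as the horizontal test field. The paper does not supply its own proof of this corollary---it is simply quoted from [1]---so there is nothing further to compare; your write-up is precisely the argument one would expect to find there.
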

\begin{rmk}\quad \label{96}
 As we have mentioned, the notion of pseudoharmonic maps is slightly different from that in [6]. In [1], the author has shown that Petit's pseudoharmonic maps coincide with Dong's if the target manifold is Sasakian. Henceforth we will investigate pseudoharmonic maps in the sense of Definition \ref{99}.
\end{rmk}
We end this section by proving a lemma which will be used later.
\begin{lem}\quad \label{82}
Let $f:(M,H(M),J,\theta)\rightarrow (N,\widetilde H(N),\widetilde J,\widetilde{\theta})$ be a $C^{\infty}$ map between two pseudo-Hermitian manifolds. Then
\begin{align}
\widetilde{\nabla}_Xdf(Y)-\widetilde{\nabla}_Ydf(X)&=df([X,Y])+\widetilde{\theta}(df(X))\widetilde{\tau}(df(Y))
-\widetilde{\theta}(df(Y))\widetilde{\tau}(df(X))  \nonumber \\
&+d\widetilde{\theta}(df(X),df(Y))\widetilde T    \label{83}
\end{align}
for any $X,Y \in \Gamma(TM)$.
\end{lem}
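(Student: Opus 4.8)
The plan is to identify the left-hand side, after subtracting $df([X,Y])$, with the $f$-pullback of the torsion of the Tanaka--Webster connection on $N$, and then to substitute the explicit torsion formula of Lemma \ref{81}. Concretely, I would set $\Phi(X,Y):=\widetilde{\nabla}_X df(Y)-\widetilde{\nabla}_Y df(X)-df([X,Y])$, an $f^{-1}TN$-valued alternating form on $M$, and first check that $\Phi$ is tensorial in each argument. Replacing $X$ by $gX$ with $g\in C^{\infty}(M)$ and using the Leibniz rule for the pull-back connection together with $[gX,Y]=g[X,Y]-(Yg)X$, the terms proportional to $Yg$ cancel, so $\Phi(gX,Y)=g\,\Phi(X,Y)$; since $\Phi$ is manifestly antisymmetric it is then also $C^{\infty}(M)$-linear in $Y$. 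Hence it suffices to evaluate $\Phi$ on a local coordinate frame $\{\partial_i\}$ of $M$, for which $[\partial_i,\partial_j]=0$.

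Next I would carry out the coordinate computation. Fix local coordinates $\{y^a\}$ on $N$ near $f(x)$, write $f^a=y^a\circ f$ and $f^a_i=\partial f^a/\partial x^i$, and let $\widetilde{\Gamma}^c_{ab}$ denote the Christoffel symbols of the Tanaka--Webster connection $\widetilde{\nabla}$ of $N$. Using the defining property $\widetilde{\nabla}_X(f^*V)=f^*(\widetilde{\nabla}^N_{df(X)}V)$ of the pull-back connection, one obtains $\widetilde{\nabla}_{\partial_i}df(\partial_j)=f^c_{ij}\,\partial_c+f^a_i f^b_j\,\widetilde{\Gamma}^c_{ab}\,\partial_c$, where $\partial_c=f^*(\partial/\partial y^c)$. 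Antisymmetrizing in $i,j$ annihilates the symmetric Hessian terms $f^c_{ij}$, and relabelling the dummy indices leaves $f^a_i f^b_j(\widetilde{\Gamma}^c_{ab}-\widetilde{\Gamma}^c_{ba})\partial_c = f^a_i f^b_j\,T_{\widetilde{\nabla}}\!\big(\tfrac{\partial}{\partial y^a},\tfrac{\partial}{\partial y^b}\big)=T_{\widetilde{\nabla}}(df(\partial_i),df(\partial_j))$, using $[\partial/\partial y^a,\partial/\partial y^b]=0$ and the tensoriality of $T_{\widetilde{\nabla}}$. Thus $\Phi(\partial_i,\partial_j)=T_{\widetilde{\nabla}}(df(\partial_i),df(\partial_j))$, and by the first step this extends to arbitrary $X,Y\in\Gamma(TM)$, giving
\[
\widetilde{\nabla}_X df(Y)-\widetilde{\nabla}_Y df(X)=df([X,Y])+T_{\widetilde{\nabla}}(df(X),df(Y)).
\]

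Finally I would insert the torsion formula. Lemma \ref{81}, applied to $N$, gives $T_{\widetilde{\nabla}}=\widetilde{\theta}\wedge\widetilde{\tau}+d\widetilde{\theta}\otimes\widetilde{T}$, that is $T_{\widetilde{\nabla}}(U,V)=\widetilde{\theta}(U)\widetilde{\tau}(V)-\widetilde{\theta}(V)\widetilde{\tau}(U)+d\widetilde{\theta}(U,V)\widetilde{T}$ for $U,V\in\Gamma(TN)$ (one should verify this is the wedge convention in force, e.g. by testing against (\ref{74}) and against $\widetilde{\tau}Y=T_{\widetilde{\nabla}}(\widetilde{T},Y)$, since the paper uses a nonstandard wedge normalization). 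Taking $U=df(X)$ and $V=df(Y)$ reproduces exactly (\ref{83}). The only delicate point is the index bookkeeping in the coordinate step — ensuring the antisymmetrization yields $T_{\widetilde{\nabla}}$ with the correct sign rather than its negative; everything else is a routine application of the Leibniz rule. An equivalent coordinate-free route is to write $\widetilde{\nabla}_X df(Y)-\widetilde{\nabla}_Y df(X)=[\beta(X,Y)-\beta(Y,X)]+df(\nabla_X Y-\nabla_Y X)$, use $\nabla_X Y-\nabla_Y X=[X,Y]+T_{\nabla}(X,Y)$, and establish $\beta(X,Y)-\beta(Y,X)=T_{\widetilde{\nabla}}(df(X),df(Y))-df(T_{\nabla}(X,Y))$; but proving this last identity again reduces to the same computation.
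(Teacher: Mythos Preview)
Your proof is correct and follows essentially the same approach as the paper's own proof: both reduce the identity to a tensoriality check followed by a coordinate computation showing that $\widetilde{\nabla}_{\partial_i}df(\partial_j)-\widetilde{\nabla}_{\partial_j}df(\partial_i)=T_{\widetilde{\nabla}}(df(\partial_i),df(\partial_j))$, after which the explicit torsion formula (\ref{72}) is substituted. Your write-up is slightly more detailed (you spell out the tensoriality verification and the Christoffel-symbol manipulation, and you flag the wedge convention), but the argument is the same.
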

\begin{proof}\quad
Let ($U,x^1,\cdots,x^{2m+1}$) and ($V,y^{1},\cdots,y^{2n+1}$) be two local coordinate systems on $M$ and $N$ respectively($f(U)\subseteq V$). It is clear that both sides are $C^{\infty}M$ linear. Since $[\frac{\partial}{\partial x^i},\frac{\partial}{\partial x^j}]=0$, by (\ref{72}) we only need to prove that
\begin{align}
\widetilde{\nabla}_{\frac{\partial}{\partial x^i}}df(\frac{\partial}{\partial x^j})-\widetilde{\nabla}_{\frac{\partial}{\partial x^j}}df(\frac{\partial}{\partial x^i})=T_{\widetilde{\nabla}}(df(\frac{\partial}{\partial x^i}),df(\frac{\partial}{\partial x^j})).
\end{align}
\indent We use $(\frac{\partial}{\partial y^a})_{f}$ to denote the pull-back coordinate system. Then
\begin{align}
&\widetilde{\nabla}_{\frac{\partial}{\partial x^i}}df(\frac{\partial}{\partial x^j})-\widetilde{\nabla}_{\frac{\partial}{\partial x^j}}df(\frac{\partial}{\partial x^i}) \nonumber \\
&=\sum_{a=1}^{2n+1}[\widetilde{\nabla}_{\frac{\partial}{\partial x^i}}\frac{\partial f^a}{\partial x^j}\cdot(\frac{\partial}{\partial y^a})_f-
\widetilde{\nabla}_{\frac{\partial}{\partial x^j}}\frac{\partial f^a}{\partial x^i}\cdot(\frac{\partial}{\partial y^a})_f] \nonumber \\
&=\sum_{a,b=1}^{2n+1}[\frac{\partial f^a}{\partial x^j}\frac{\partial f^b}{\partial x^i}\widetilde{\nabla}_{\frac{\partial}{\partial y^b}}\frac{\partial}{\partial y^a}-\frac{\partial f^a}{\partial x^i}\frac{\partial f^b}{\partial x^j}\widetilde{\nabla}_{\frac{\partial}{\partial y^b}}\frac{\partial}{\partial y^a}]  \nonumber \\
&=\sum_{a,b=1}^{2n+1}\frac{\partial f^a}{\partial x^i}\frac{\partial f^b}{\partial x^j}T_{\widetilde{\nabla}}(\frac{\partial}{\partial y^a},\frac{\partial}{\partial y^b}) \nonumber \\
&=T_{\widetilde{\nabla}}(df(\frac{\partial}{\partial x^i}),df(\frac{\partial}{\partial x^j})).
\end{align}
\end{proof}

\section{The second variation formula}
\ \ \ \ In [1], the author derived the second variation formula for pseudoharmonic maps into Sasakian manifolds. In following, we will derive the second variation formula for pseudoharmonic maps into any pseudo-Hermitian manifolds. \\
\indent Firstly, let $f:(M,H(M),J,\theta)\rightarrow(N,\widetilde{H}(N),\widetilde J,\widetilde{\theta})$ be a pseudoharmonic map between two pseudo-Hermitian manifolds. Assume that M is closed.\\
\indent Given any one-parameter variation $ \{f_t\} ( |t|<\varepsilon)$ with
$f_0=f,V=\frac{\partial f_t}{\partial t}|_{t=0}$, we set $\Phi(\cdot,t)=f_t$ and $V_t=\frac{\partial{f_t}}{\partial{t}}$. Moreover, we require $V_t=\frac{\partial f_t}{\partial t}$ to be horizontal for all t, i.e. $\frac{\partial f_t}{\partial t} \in \Gamma(f_t^{-1}(\widetilde H(N)))$.\\
\indent From the first variation formula in [1], we have
\begin{align}
\frac{dE_{H,\widetilde H}(f_t)}{dt}&=\int_M \langle \widetilde{\nabla}_{\frac{\partial}{\partial t}}d\Phi_{H,\widetilde H}(e_{\lambda}),d\Phi_{H,\widetilde H}(e_{\lambda}) \rangle dV_{\theta} \nonumber \\
&=\int_M[ \langle \widetilde{\nabla}_{e_{\lambda}}d\Phi(\frac{\partial}{\partial t}),d\Phi_{H,\widetilde H}(e_{\lambda}) \rangle +\widetilde{\theta}(d\Phi(\frac{\partial}{\partial t})) \langle \widetilde{\tau}(d\Phi(e_{\lambda})),d\Phi_{H,\widetilde{H}}(e_{\lambda}) \rangle  \nonumber \\
&\ \ \ \ \ \ \ \ \ -\widetilde{\theta}(d\Phi(e_{\lambda})) \langle \widetilde{\tau}(d\Phi(\frac{\partial}{\partial t}),d\Phi_{H,\widetilde H}(e_{\lambda}) \rangle ]dV_{\theta} \nonumber \\
&=\int_M[- \langle d\Phi(\frac{\partial}{\partial t}),(\widetilde{\nabla}_{e_{\lambda}}d\Phi_{H,\widetilde{H}})(e_{\lambda}) \rangle +\widetilde{\theta}(d\Phi(\frac{\partial}{\partial t})) \langle \widetilde{\tau}(d\Phi(e_{\lambda})),d\Phi_{H,\widetilde{H}}(e_{\lambda}) \rangle  \nonumber \\
&\ \ \ \ \ \ \ \ \ -\widetilde{\theta}(d\Phi(e_{\lambda})) \langle \widetilde{\tau}(d\Phi(\frac{\partial}{\partial t})),d\Phi_{H,\widetilde H}(e_{\lambda}) \rangle ]dV_{\theta}. \nonumber
\end{align}
Then we get
\begin{align}
\frac{dE_{H,\widetilde H}(f_t)}{dt}=\int_{M}[- \langle V_t,\tau_{H,\widetilde H}(f_t) \rangle +\widetilde{\theta}(d\Phi(\frac{\partial}{\partial t})) \langle \widetilde{\tau}(d\Phi(e_{\lambda})),d\Phi_{H,\widetilde{H}}(e_{\lambda}) \rangle ]dV_{\theta}. \nonumber
\end{align}
Note that $f$ is pseudoharmonic and $\frac{\partial f_t}{\partial t} \in \Gamma(f_t^{-1}(\widetilde H(N)))$ for all $t$ , one gets
\begin{align}
\frac{d^2E_{H,\widetilde H}(f_t)}{dt^2}|_{t=0}
&=-\frac{d}{dt}\{\int_{M} \langle V_t,\tau_{H,\widetilde H}(f_t) \rangle dV_{\theta}\}|_{t=0}  \nonumber \\
&=-\int_{M} \langle V,\widetilde{\nabla}_{\frac{\partial}{\partial t}}\tau_{H,\widetilde H}(f_t) \rangle |_{t=0}dV_{\theta}. \nonumber
\end{align}
\indent Let $\widetilde{R}$ be the curvature tensor fields of $\widetilde{\nabla}$.
Using (\ref{83}) and $\widetilde{\nabla}\theta=0$
we derive that
\begin{align}
&\widetilde{\nabla}_{\frac{\partial}{\partial t}}
\tau_{H,\widetilde H}(f_t)|_{t=0} \nonumber \\
&=\sum_{\lambda=1}^{2m}\widetilde{\nabla}_{\frac{\partial}{\partial t}}[(\widetilde{\nabla}_{e_{\lambda}}df_{tH,\widetilde H})(e_{\lambda})+(f_t^{*}\widetilde{\theta})(e_{\lambda})\widetilde{\tau}(df_{tH,\widetilde H}(e_{\lambda}))]|_{t=0} \nonumber \\
&=\sum_{\lambda=1}^{2m}(\widetilde{\nabla}_{\frac{\partial}{\partial t}}\widetilde{\nabla}_{e_{\lambda}}d\Phi_{H,\widetilde H})(e_{\lambda})|_{t=0}
+\sum_{\lambda=1}^{2m}\widetilde{\nabla}_{\frac{\partial}{\partial t}}
[\widetilde {\theta}(d\Phi(e_{\lambda}))\widetilde{\tau}(d\Phi_{H,\widetilde H}(e_{\lambda}))]|_{t=0} \nonumber \\
&=\sum_{\lambda=1}^{2m}[\widetilde{R}(\frac{\partial }{\partial t},e_{\lambda})d\Phi_{H,\widetilde H}](e_{\lambda})|_{t=0}
+\sum_{\lambda=1}^{2m}(\widetilde{\nabla}_{e_{\lambda}}\widetilde{\nabla}_{\frac{\partial}{\partial t}}d\Phi_{H,\widetilde H})(e_{\lambda})|_{t=0} \nonumber \\
&+\sum_{\lambda=1}^{2m}[d\widetilde{\theta}(V,df(e_{\lambda}))\widetilde{\tau}(df_{H,\widetilde{H}}(e_{\lambda}))
+\widetilde{\theta}(df(e_{\lambda}))(\widetilde{\nabla}_{V}\widetilde{\tau})(df_{H,\widetilde{H}}(e_{\lambda}))] \nonumber \\
&+\sum_{\lambda=1}^{2m}\widetilde{\theta}(df(e_{\lambda}))\widetilde{\tau}(\widetilde{\nabla}_{e_{\lambda}}V-\widetilde{\theta}(df(e_{\lambda}))\widetilde{\tau}(V)) \nonumber
\end{align}
Note that $G_{\widetilde{\theta}}(\cdot,\cdot)=\frac{1}{2}d\widetilde{\theta}(\cdot,\widetilde J \cdot)$.
Then
\begin{align}
&\widetilde{\nabla}_{\frac{\partial}{\partial t}}
\tau_{H,\widetilde H}(f_t)|_{t=0} \nonumber \\
&=\sum_{\lambda=1}^{2m}[\widetilde{R}(\frac{\partial }{\partial t},e_{\lambda})d\Phi_{H,\widetilde H}](e_{\lambda})|_{t=0}
+\sum_{\lambda=1}^{2m}(\widetilde{\nabla}_{e_{\lambda}}\widetilde{\nabla}_{\frac{\partial}{\partial t}}d\Phi_{H,\widetilde H})(e_{\lambda})|_{t=0} \nonumber \\
&+\sum_{\lambda=1}^{2m}[2 \langle \widetilde JV,df_{H,\widetilde H}(e_{\lambda}) \rangle \widetilde{\tau}(df_{H,\widetilde{H}}(e_{\lambda}))
+\widetilde{\theta}(df(e_{\lambda}))(\widetilde{\nabla}_{V}\widetilde{\tau})(df_{H,\widetilde{H}}(e_{\lambda}))] \nonumber \\
&+\sum_{\lambda=1}^{2m}\widetilde{\theta}(df(e_{\lambda}))\widetilde{\tau}(\widetilde{\nabla}_{e_{\lambda}}V)-\sum_{\lambda=1}^{2m}[\widetilde{\theta}(df(e_{\lambda}))]^2\widetilde{\tau}^2(V) \label{54}
\end{align}
Taking into account the identity
\begin{align}
df(e_{\lambda})=\widetilde{\theta}(df(e_{\lambda}))\widetilde T+df_{H,\widetilde H}(e_{\lambda}), \nonumber
\end{align}
we have
\begin{align}
 \langle \widetilde R(V,df(e_{\lambda}))df_{H,\widetilde H}(e_{\lambda}),V \rangle =\widetilde{\theta}(df(e_{\lambda})) \langle \widetilde R(V,\widetilde T)df_{H,\widetilde H}(e_{\lambda}),V \rangle
+ \langle \widetilde R (V,df_{H,\widetilde H}(e_{\lambda}))df_{H,\widetilde H}(e_{\lambda}),V \rangle . \nonumber
\end{align}
By (1.77) in [2], we deduce that
\begin{align}
 \langle \widetilde R(V,df(e_{\lambda}))df_{H,\widetilde H}(e_{\lambda}),V \rangle &=-\widetilde{\theta}(df(e_{\lambda})) \langle \widetilde S(df_{H,\widetilde H}(e_{\lambda}),V),V \rangle
+ \langle \widetilde R (V,df_{H,\widetilde H}(e_{\lambda}))df_{H,\widetilde H}(e_{\lambda}),V \rangle  \nonumber \\
=-\widetilde{\theta}(df(e_{\lambda}))& \langle (\widetilde{\nabla}_{df_{H,\widetilde H}(e_{\lambda})}\widetilde{\tau})V
-(\widetilde{\nabla}_V\widetilde{\tau})(df_{H,\widetilde H}(e_{\lambda})),V \rangle
+ \langle \widetilde R (V,df_{H,\widetilde H}(e_{\lambda}))df_{H,\widetilde H}(e_{\lambda}),V \rangle  . \label{55}
\end{align}
Here $\widetilde{S}$ is given by $\widetilde{S}(X,Y)=(\widetilde{\nabla}_X \widetilde{\tau})(Y)-(\widetilde{\nabla}_Y \widetilde{\tau})(X)$ for any $X,Y \in \Gamma(TN)$.\\
\indent Let $X \in\Gamma(H(M))$ be (locally)defined by
\begin{align}
X=[\sum_{\lambda=1}^{2m} \langle (\widetilde{\nabla}_{\frac{\partial}{\partial t}}d\Phi_{H,\widetilde H})e_{\lambda},V \rangle e_{\lambda}]|_{t=0}. \nonumber
\end{align}
Then we compute the divergence of $X$.
By the divergence theorem, one deduces from $(\ref{83})$ that
\begin{align}
&\int_{M}\sum_{\lambda=1}^{2m} \langle (\widetilde{\nabla}_{e_{\lambda}}\widetilde{\nabla}_{\frac{\partial}{\partial t}}d\Phi_{H,\widetilde H})(e_{\lambda})|_{t=0},V \rangle dV_{\theta} \nonumber \\
&=-\int_{M}\sum_{\lambda=1}^{2m} \langle (\widetilde{\nabla}_{\frac{\partial}{\partial t}}d\Phi_{H,\widetilde H})(e_{\lambda}), \widetilde{\nabla}_{e_{\lambda}}V \rangle |_{t=0} dV_{\theta}  \nonumber \\\
&=-\int_M\sum_{\lambda=1}^{2m}[ \langle \widetilde{\nabla}_{e_{\lambda}}V,\widetilde{\nabla}_{e_{\lambda}}V \rangle
-\widetilde{\theta}(df(e_{\lambda})) \langle \widetilde{\tau}(V),\widetilde{\nabla}_{e_{\lambda}}V \rangle ]dV_{\theta} \label{56}
\end{align}
It follows from (\ref{54}), (\ref{55}) and (\ref{56}) that
\begin{thm}\quad \label{58}
Let $f:(M,H(M),J,\theta)\rightarrow (N,\widetilde{H} (N),\widetilde J,\widetilde{\theta})$ be a pseudoharmonic map between two pseudo-Hermitian manifolds. Assume that $M$ is closed. Let $ \{f_t\} ( |t| < \varepsilon)$ be a family of maps with
$f_0=f,\frac{\partial f_t}{\partial t} \in \Gamma(f_t^{-1}(\widetilde H(N)))$ for all $t$. Set $V=\frac{\partial f_t}{\partial t}|_{t=0}$ . Then the second variation formula of the energy functional $E_{H,\widetilde H}$ is given by
\begin{align}
\frac{d^2E_{H,\widetilde H}(f_t)}{dt^2}&|_{t=0}
=\int_{M}\sum_{\lambda=1}^{2m}\{ \langle \widetilde{\nabla}_{e_{\lambda}}V,\widetilde{\nabla}_{e_{\lambda}}V \rangle -\widetilde R (V,df_{H,\widetilde H}(e_{\lambda}),V,df_{H,\widetilde H}(e_{\lambda})) \nonumber \\
&+\widetilde{\theta}(df(e_{\lambda})) \langle (\widetilde{\nabla}_{df_{H,\widetilde H}(e_{\lambda})}\widetilde{\tau})V,V \rangle +[\widetilde{\theta}(df(e_{\lambda}))]^2 \langle \widetilde{\tau}(V),\widetilde{\tau}(V) \rangle  \nonumber \\
&-2\widetilde{\theta}(df(e_{\lambda}))[ \langle (\widetilde{\nabla}_{V}\widetilde{\tau})(df_{H,\widetilde H}(e_{\lambda})),V \rangle + \langle \widetilde{\tau}(V),\widetilde{\nabla}_{e_{\lambda}}V \rangle ] \nonumber \\
&-2 \langle df_{H,\widetilde H}(e_{\lambda}),\widetilde{\tau}V \rangle  \langle df_{H,\widetilde H}(e_{\lambda}),\widetilde{J}V \rangle \}dV_{\theta}, \label{59}
\end{align}
where $\{e_{\lambda}\}_{\lambda=1}^{2m}$ is a local orthonormal frame of H(M).
\end{thm}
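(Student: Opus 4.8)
The plan is to start from the first variation formula of [1] recorded above, differentiate it once more in $t$, and then reorganize the outcome using the commutation identity of Lemma~\ref{82}, the curvature of the pull-back connection, and the divergence theorem on the closed manifold $M$. Since $f$ is pseudoharmonic we have $\tau_{H,\widetilde H}(f)=0$, so when we differentiate $-\int_M\langle V_t,\tau_{H,\widetilde H}(f_t)\rangle\,dV_\theta$ at $t=0$ the term in which the $t$-derivative hits $V_t$ drops out (here one uses that $\tfrac{\partial f_t}{\partial t}$ is horizontal for every $t$, so that the pairing is against the full $\tau_{H,\widetilde H}(f_t)$, which vanishes at $t=0$), and we are left with $-\int_M\langle V,\widetilde\nabla_{\partial/\partial t}\tau_{H,\widetilde H}(f_t)\rangle|_{t=0}\,dV_\theta$. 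The whole problem reduces to computing $\widetilde\nabla_{\partial/\partial t}\tau_{H,\widetilde H}(f_t)|_{t=0}$.

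For this I would write $\tau_{H,\widetilde H}(f_t)=\sum_\lambda[(\widetilde\nabla_{e_\lambda}d\Phi_{H,\widetilde H})(e_\lambda)+(f_t^*\widetilde\theta)(e_\lambda)\widetilde\tau(d\Phi_{H,\widetilde H}(e_\lambda))]$ and apply $\widetilde\nabla_{\partial/\partial t}$ termwise. On the first summand, commuting $\widetilde\nabla_{\partial/\partial t}$ past $\widetilde\nabla_{e_\lambda}$ (using that $e_\lambda$ is $t$-independent and $[\partial/\partial t,e_\lambda]=0$) introduces the curvature $\widetilde R(\partial/\partial t,e_\lambda)$ of the pull-back connection, leaving $[\widetilde R(\partial/\partial t,e_\lambda)d\Phi_{H,\widetilde H}](e_\lambda)+(\widetilde\nabla_{e_\lambda}\widetilde\nabla_{\partial/\partial t}d\Phi_{H,\widetilde H})(e_\lambda)$. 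On the torsion summand the Leibniz rule together with $\widetilde\nabla\widetilde\theta=0$ (Theorem~\ref{80}) and Lemma~\ref{82} applied to the commuting pair $(\partial/\partial t,e_\lambda)$ produces the terms carrying $d\widetilde\theta(V,df(e_\lambda))$, $(\widetilde\nabla_V\widetilde\tau)(df_{H,\widetilde H}(e_\lambda))$, $\widetilde\tau(\widetilde\nabla_{e_\lambda}V)$ and $[\widetilde\theta(df(e_\lambda))]^2\widetilde\tau^2(V)$; then the identity $G_{\widetilde\theta}(\cdot,\cdot)=\tfrac{1}{2}d\widetilde\theta(\cdot,\widetilde J\cdot)$ turns $d\widetilde\theta(V,df_{H,\widetilde H}(e_\lambda))$ into $2\langle\widetilde JV,df_{H,\widetilde H}(e_\lambda)\rangle$. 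This is the content of (\ref{54}).

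Next, pairing with $V$, I would split the curvature term by inserting $df(e_\lambda)=\widetilde\theta(df(e_\lambda))\widetilde T+df_{H,\widetilde H}(e_\lambda)$, so that $\langle\widetilde R(V,df(e_\lambda))df_{H,\widetilde H}(e_\lambda),V\rangle$ becomes a purely horizontal curvature term plus $\widetilde\theta(df(e_\lambda))\langle\widetilde R(V,\widetilde T)df_{H,\widetilde H}(e_\lambda),V\rangle$. The mixed term I would rewrite using the structure equation (1.77) in [2] for the Tanaka--Webster curvature, which expresses $\widetilde R(\cdot,\widetilde T)$ through $\widetilde S(X,Y)=(\widetilde\nabla_X\widetilde\tau)Y-(\widetilde\nabla_Y\widetilde\tau)X$; this yields $-\widetilde\theta(df(e_\lambda))\langle(\widetilde\nabla_{df_{H,\widetilde H}(e_\lambda)}\widetilde\tau)V-(\widetilde\nabla_V\widetilde\tau)(df_{H,\widetilde H}(e_\lambda)),V\rangle$, i.e.\ (\ref{55}). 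Finally, to handle $\sum_\lambda\langle(\widetilde\nabla_{e_\lambda}\widetilde\nabla_{\partial/\partial t}d\Phi_{H,\widetilde H})(e_\lambda),V\rangle|_{t=0}$ I would integrate by parts: setting $X=\sum_\lambda\langle(\widetilde\nabla_{\partial/\partial t}d\Phi_{H,\widetilde H})e_\lambda,V\rangle e_\lambda|_{t=0}\in\Gamma(H(M))$, the divergence formula (\ref{90}) on the closed $M$ moves a derivative onto $\widetilde\nabla_{e_\lambda}V$, and Lemma~\ref{82} once more (using $\widetilde\nabla_{\partial/\partial t}d\Phi(e_\lambda)=\widetilde\nabla_{e_\lambda}V$) rewrites $(\widetilde\nabla_{\partial/\partial t}d\Phi_{H,\widetilde H})(e_\lambda)$ as $\widetilde\nabla_{e_\lambda}V$ minus the torsion correction $\widetilde\theta(df(e_\lambda))\widetilde\tau(V)$, giving (\ref{56}). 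Substituting (\ref{54}), (\ref{55}) and (\ref{56}) back into $-\int_M\langle V,\widetilde\nabla_{\partial/\partial t}\tau_{H,\widetilde H}(f_t)\rangle|_{t=0}\,dV_\theta$ and collecting terms produces (\ref{59}).

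The main obstacle I anticipate is bookkeeping rather than any single hard idea: one must keep track of which slots carry the horizontal projection $\pi_{\widetilde H}$, verify that the $\widetilde\theta(df(e_\lambda))$-weighted torsion contributions coming from three distinct sources (differentiating the torsion part of $\tau_{H,\widetilde H}$, the splitting of the curvature via (1.77), and the integration-by-parts correction) assemble with exactly the right coefficients and signs, and justify carefully the commutation identities in the pull-back bundle $\Phi^{-1}TN$ over $M\times(-\varepsilon,\varepsilon)$.
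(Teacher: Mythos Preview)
Your proposal is correct and follows essentially the same route as the paper's own argument: differentiate the first variation, use pseudoharmonicity and horizontality of $V_t$ to reduce to $-\int_M\langle V,\widetilde\nabla_{\partial/\partial t}\tau_{H,\widetilde H}(f_t)\rangle|_{t=0}$, expand via curvature commutation and Lemma~\ref{82} to obtain (\ref{54}), split the curvature term through (1.77) of [2] to get (\ref{55}), and integrate by parts with the horizontal vector field $X$ to get (\ref{56}). The only minor slip is the parenthetical ``$\widetilde\nabla_{\partial/\partial t}d\Phi(e_\lambda)=\widetilde\nabla_{e_\lambda}V$'', which is literally false (torsion is present); but since you immediately add the correct torsion correction $-\widetilde\theta(df(e_\lambda))\widetilde\tau(V)$ when passing to the horizontal projection, the computation you actually carry out is right.
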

\begin{dfn}([6])\quad
Let $f:(M,H(M),J,\theta)\rightarrow (N,\widetilde{H}(N),\widetilde{J},\widetilde{\theta})$ be a $C^{\infty}$ map between two pseudo-Hermitian manifolds. We say $f$ is horizontal if
\begin{align}
(d_xf)(H_xM)\subseteq \widetilde{H}_{f(x)}(N),
\end{align}
for any $x\in M$.
\end{dfn}
\begin{cor}\quad\label{86}
Let $f:(M,H(M),J,\theta)\rightarrow (N,\widetilde{H} (N),\widetilde J,\widetilde{\theta})$ be a horizontal pseudoharmonic map (i.e. $f$ is horizontal and pseudohamornic ) between two pseudo-Hermitian manifolds. Assume that $M$ is closed.
Let $ \{f_t\} ( |t|<\varepsilon)$ be a family of maps with
$f_0=f,V=\frac{\partial f_t}{\partial t}|_{t=0}$. Moreover, we require $\frac{\partial f_t}{\partial t}$ to be horizontal for all t, i.e.$\frac{\partial f_t}{\partial t} \in \Gamma(f_t^{-1}(\widetilde H(N)))$. Then the second variation formula of the energy functional $E_{H,\widetilde H}$ is given by
\begin{align}
\frac{d^2E_{H,\widetilde H}(f_t)}{dt^2}|_{t=0}
=&\int_M \sum_{\lambda=1}^{2m}[ \langle \widetilde{\nabla}_{e_{\lambda}}V,\widetilde{\nabla}_{e_{\lambda}}V \rangle -\widetilde R(V,df(e_{\lambda}),V,df(e_{\lambda})) \nonumber \\
&-2 \langle df(e_{\lambda}),\widetilde{\tau}V \rangle  \langle df(e_{\lambda}),\widetilde JV \rangle ]dV_{\theta}.
\end{align}
\end{cor}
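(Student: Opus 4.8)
The plan is to obtain Corollary \ref{86} as a direct specialization of the general second variation formula \eqref{59} in Theorem \ref{58}; the only thing to do beyond Theorem \ref{58} is to track which terms of the integrand survive under the extra hypothesis that $f$ is horizontal.

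First I would record the two elementary consequences of horizontality. For any local orthonormal frame $\{e_{\lambda}\}_{\lambda=1}^{2m}$ of $H(M)$ we have $df(e_{\lambda})\in\widetilde H(N)$ at each point. Hence, on the one hand, $\widetilde{\theta}(df(e_{\lambda}))=0$, since $\ker\widetilde{\theta}\supseteq\widetilde H(N)$; on the other hand, $df_{H,\widetilde H}(e_{\lambda})=\pi_{\widetilde H}(df(e_{\lambda}))=df(e_{\lambda})$, because the projection $\pi_{\widetilde H}:TN\to\widetilde H(N)$ restricts to the identity on horizontal vectors. Note also that $V$ is horizontal by hypothesis, but this fact is not needed for the reduction: the term $\langle\widetilde{\nabla}_{e_{\lambda}}V,\widetilde{\nabla}_{e_{\lambda}}V\rangle$ is simply carried over unchanged.

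Next I would substitute these two identities into \eqref{59}. Every summand of the integrand of \eqref{59} carrying a factor $\widetilde{\theta}(df(e_{\lambda}))$ then vanishes identically; concretely, this removes the term $\widetilde{\theta}(df(e_{\lambda}))\langle(\widetilde{\nabla}_{df_{H,\widetilde H}(e_{\lambda})}\widetilde{\tau})V,V\rangle$, the term $[\widetilde{\theta}(df(e_{\lambda}))]^{2}\langle\widetilde{\tau}(V),\widetilde{\tau}(V)\rangle$, and the whole bracket $-2\widetilde{\theta}(df(e_{\lambda}))[\langle(\widetilde{\nabla}_{V}\widetilde{\tau})(df_{H,\widetilde H}(e_{\lambda})),V\rangle+\langle\widetilde{\tau}(V),\widetilde{\nabla}_{e_{\lambda}}V\rangle]$. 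In the three remaining summands I replace $df_{H,\widetilde H}(e_{\lambda})$ by $df(e_{\lambda})$: this leaves $\langle\widetilde{\nabla}_{e_{\lambda}}V,\widetilde{\nabla}_{e_{\lambda}}V\rangle$ untouched, turns $-\widetilde R(V,df_{H,\widetilde H}(e_{\lambda}),V,df_{H,\widetilde H}(e_{\lambda}))$ into $-\widetilde R(V,df(e_{\lambda}),V,df(e_{\lambda}))$, and turns the last summand into $-2\langle df(e_{\lambda}),\widetilde{\tau}V\rangle\langle df(e_{\lambda}),\widetilde J V\rangle$. Summing over $\lambda$ and integrating over $M$ yields precisely the stated formula. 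Since this is a pure specialization, there is essentially no obstacle; the only points deserving an explicit word are the vanishing $\widetilde{\theta}(df(e_{\lambda}))=0$ (which is exactly the horizontality of $f$) and the identification of $\pi_{\widetilde H}$ with the identity on $\widetilde H(N)$.
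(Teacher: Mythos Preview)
Your proposal is correct and matches the paper's approach: the corollary is stated immediately after Theorem~\ref{58} as a direct specialization, with no separate proof given, precisely because the horizontality hypothesis forces $\widetilde{\theta}(df(e_{\lambda}))=0$ and $df_{H,\widetilde H}(e_{\lambda})=df(e_{\lambda})$, which is exactly the reduction you carry out.
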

Similar to the case of harmonic maps between Riemannian manifolds, we may introduce the notion of stability for pseudo-Hermitian harmonic maps as follows.
\begin{dfn}\quad \label{87}
Let $f:(M,H(M),J,\theta)\rightarrow(N,\widetilde{H}(N),\widetilde J,\widetilde{\theta})$ be a pseudoharmonic map. We say $f$ is stable, if $\frac{d^2}{dt^2}|_{t=0}E_{H,\widetilde{H}}(f_t)\geq0$ for any variation $\{f_t\}|_{t <|\epsilon|}$ with $f_0=f,\nu=\frac{\partial f_t}{\partial t}|_{t=0} \in \Gamma(f^{-1}\widetilde{H}(N))$.
If $\frac{d^2}{dt^2}|_{t=0}E_{H,\widetilde{H}}(f_t) < 0$ for some variation ${f_t}$ with $f_0=f,\nu=\frac{\partial f_t}{\partial t}|_{t=0} \in \Gamma(f^{-1}\widetilde{H}(N))$, $f$ is called unstable.
\end{dfn}
Note that if we require $\frac{\partial f_t}{\partial t} \in \Gamma(f^{-1}\widetilde{H}(N))$ to be horizontal for all $t$, by Theorem \ref{58} we have
\begin{align}
\frac{d^2}{dt^2}|_{t=0}E_{H,\widetilde{H}}(f_t)&=\int_{M}\sum_{\lambda=1}^{2m}\{ \langle \widetilde{\nabla}_{e_{\lambda}}V,\widetilde{\nabla}_{e_{\lambda}}V \rangle -\widetilde R (V,df_{H,\widetilde H}(e_{\lambda}),V,df_{H,\widetilde H}(e_{\lambda})) \nonumber \\
&+\widetilde{\theta}(df(e_{\lambda})) \langle (\widetilde{\nabla}_{df_{H,\widetilde H}(e_{\lambda})}\widetilde{\tau})V,V \rangle +[\widetilde{\theta}(df(e_{\lambda}))]^2 \langle \widetilde{\tau}(V),\widetilde{\tau}(V) \rangle  \nonumber \\
&-2\widetilde{\theta}(df(e_{\lambda}))[ \langle (\widetilde{\nabla}_{V}\widetilde{\tau})(df_{H,\widetilde H}(e_{\lambda})),V \rangle + \langle \widetilde{\tau}(V),\widetilde{\nabla}_{e_{\lambda}}V \rangle ] \nonumber \\
&-2 \langle df_{H,\widetilde H}(e_{\lambda}),\widetilde{\tau}V \rangle  \langle df_{H,\widetilde H}(e_{\lambda}),\widetilde{J}V \rangle \}dV_{\theta}. \label{106}
\end{align}
\indent In following, we will mainly use (\ref{106}) to verify the unstability of a pseudo-harmonic map. For convenience, we denote the right hand of (\ref{106}) by $H_f(V,V)$.

\begin{rmk}\quad \label{93}
When the target manifold N is Sasakian, the second author derived the second variation formula of $E_{H,\widetilde{H}}$ which is given by (cf. [1])
\begin{align}
\frac{d^2E_{H,\widetilde H}(f_t)}{dt^2}&|_{t=0}
=\int_{M}\sum_{\lambda=1}^{2m}[ \langle \widetilde{\nabla}_{e_{\lambda}}V_{\widetilde H},\widetilde{\nabla}_{e_{\lambda}}V_{\widetilde H} \rangle -\widetilde R (V,df_{H,\widetilde H}(e_{\lambda}),V,df_{H,\widetilde H}(e_{\lambda}))]dV_{\theta}. \label{200}
\end{align}
where $\{f_t\}$ is the variation of $f$ corresponding to $V$ and $V_{\widetilde H}$ denotes the horizontal part of $V$.
He used this formula to establish some stability and unstability results for pseudoharmoic maps.
\end{rmk}
\section{Pseudoharmonic maps into isometric embedded CR manifolds}
\ \ \ \ In [1], the author has shown that any nonconstant horizontal pseudoharmonic map from a closed pseudo-Hermitian manifold to the odd dimensional sphere is unstable. In this section we generalize his result to the case that the target manifold is an isometric embedded CR manifold.\\
\\
\indent Let $i:N\hookrightarrow \mathbb{C}^{n+k}(\cong\mathbf{R}^{2n+2k})$ be a real (2n+1)-dimensional submanifold($k\geq1$). If N is a CR manifold whose CR structure is induced from $\mathbb{C}^{n+k}$, i.e.
\begin{equation}
T_{1,0}(N)=T^{1,0}(\mathbb{C}^{n+k}) \cap (TN\otimes \mathbb{C}), \nonumber \\
\end{equation}
then N is referred to as an embedded CR manifold. \\
\indent For some pseudo-Hermitian structure $\widetilde{\theta}$ on $N$, let $g_{\widetilde{\theta}}$ be the Webster metric of $(N,\widetilde{\theta})$ and $g_{can}$ be the canonical metric on $\mathbb{C}^{n+k}(\cong\mathbf{R}^{2n+2k})$. We call $(N,g_{\widetilde{\theta}})$ an isometric embedded CR manifold if $ N$ is an embedded CR manifold and $g_{\widetilde{\theta}}$=$i^* g_{can}$.
Then $(N,\widetilde{H}(N),\widetilde{J},\widetilde{\theta})$ is a pseudo-Hermitian manifold, where $\widetilde{H}=ker\widetilde{\theta}$ and $\widetilde{J}$ is induced from the standard complex structure $\widehat{J}$ of $\mathbb{C}^{n+k}(\cong\mathbf{R}^{2n+2k})$. More precisely,
if X $\in\Gamma({\widetilde{H}(N)})$, we have
\begin{equation}
 \widetilde{J}X=\widehat{J}X.   \label{63}
\end{equation}
Let $\widetilde T$ be the characteristic direction of ($N,\widetilde {\theta}$).
If $T^{\bot}N$ is the normal bundle of N in $\mathbb{C}^{n+k}(\cong\mathbf{R}^{2n+2k})$, there exists a vector field $\xi\in \Gamma(T^{\bot}N)$ such that
\begin{equation}
\widetilde{T}=\widehat{J}\xi |_{N} \label{61}
\end{equation}
\begin{exm}([2],cf also [1])\quad \label{91}
The standard odd-dimensional sphere $i:S^{2n+1}\hookrightarrow \mathbb{C}^{n+1}$ is a isometric embedded CR manifold. Moreover, it is a Sasakian manifold.
\end{exm}

In this section we  always assume that $(N,\widetilde{H}(N),\widetilde J,\widetilde{\theta})$ is an isometric embedded CR manifold.
Let $\widehat{\nabla}$ be the standard flat connection on $ \mathbb{C}^{n+k}(\cong \mathbf{R}^{2n+2k})$, $\widetilde{\nabla}^{\theta}$ the Levi-Civita connection of (N,$g_{\widetilde{\theta}}$) and h the second fundamental form of N in $\mathbb{C}^{n+k}(\cong\mathbf{R}^{2n+2k})$. These are related by
\begin{equation}
 \widehat{ \nabla} _{X} Y = \widetilde{\nabla}_{X}^{\theta}Y+h(X,Y),\label{3}
\end{equation}
where X,Y$\in \Gamma(TN)$. \\
\indent For $\eta \in \Gamma(T^{\perp}N)$ and $X \in \Gamma(TN)$, we can define the Weingarten map $A_{\eta} X$ and the connection $\nabla _X^{\perp} \eta$ in the normal bundle by
\begin{equation}
\widehat{ \nabla} _X \eta =-A_{\eta} X+ \nabla_X^{\perp} \eta.
\end{equation}
The tensors h and A are related by
\begin{equation}
 \langle A_{\eta}X,Y \rangle = \langle h(X,Y),\eta  \rangle , \label{15}
\end{equation}
where X and Y are tangent to N and $\eta$ is normal to N. Obviously, $ h(X,Y)$ is symmetric in X and Y and for each $\eta $ the linear map $A_{\eta}$ is self-adjoint.\\
\\
\indent Let $\{v_{2n+2},\cdots ,v_{2n+2k}\}$ be an orthonormal basis for the normal space $T_{y}^{\bot}N$ to $N$ at $y$. Define a linear map $Q_{y}^{N}$: $T_y N \rightarrow T_{y}N$ by
\begin{equation}
Q_{y}^{N}=\sum_{\alpha=2n+2}^{2n+2k}\{2(\pi_{\widetilde{H}}A_{v_{\alpha}})^2
-trace_{G_{\widetilde{\theta}}}(A_{v_{\alpha}})\cdot A_{v_{\alpha}}+2A_{\xi}^2-4Id\},
\end{equation}
where $trace_{G_{\widetilde{\theta}}}(A_{v_{\alpha}})= \sum _{j=1} ^{2n} \langle A_{v_{\alpha}}(X_{j}),\;X_{j} \rangle $ for some (local)orthonormal frame $\{X_{j}:\; 1\leq j\leq 2n\}$ of $\widetilde{H}(N)$.
The definition of $Q_y ^{N}$ does not depend on the choice of orthonormal basis at $y$. Note that for any $X,Y \in \widetilde{H}_y(N)$ we have $\langle Q^N_yX,Y \rangle = \langle X,Q^N_yY \rangle$ .\\
\indent By the Gauss equation for submanifolds, the curvature tensor of $\widetilde{\nabla}^{\theta}$ is given by
\begin{equation}
\widetilde{R}^{\theta}(X,Y,Z,W)= \langle h(Y,W),h(X,Z) \rangle - \langle h(X,W),h(Y,Z) \rangle , \label{84}
\end{equation}
where $X, Y, Z, W$ $\in \Gamma(TN)$.\\
\indent Let $\widetilde{\nabla}$ be the Tanaka-Webster connection of $(N,\widetilde{H}(N),\widetilde{J},\widetilde{\theta})$ and $\widetilde{\tau}$ be pseudo-Hermitian torsion. The sectional curvature of $\widetilde{\nabla}$ is given by(cf. [2], page 49)
\begin{align}
\widetilde{R}(X,Y,X,Y)
&=\widetilde{R}^{\theta}(X,Y,X,Y)+3 \langle \widetilde{J}X,Y \rangle ^2- \langle \widetilde{\tau}X,Y \rangle ^2
+ \langle \widetilde{\tau}X,X \rangle  \langle \widetilde{\tau}Y,Y \rangle ,\nonumber
\end{align}
where $X,Y \in \Gamma(\widetilde{H}(N))$.
By (\ref{84}) we obtain
\begin{align}
\widetilde{R}(X,Y,X,Y)&= \langle h(X,X),h(Y,Y) \rangle -|h(X,Y)|^2+3 \langle \widetilde{J}X,Y \rangle ^2
- \langle \widetilde{\tau}X,Y \rangle ^2  + \langle \widetilde{\tau}X,X \rangle  \langle \widetilde{\tau}Y,Y \rangle ,\label{8}
\end{align}
where $X,Y \in \Gamma(\widetilde{H}(N))$.\\
\indent Next we give the following lemma which will be useful to us later.
\begin{lem}\quad \label{88}
Let $\xi \in\Gamma(T^{\bot}N)$ be the vector field in (\ref{61}). For any $ X \in \Gamma(\widetilde H(N))$ we have
\begin{enumerate}[(i)]
\item $ \langle A_{\xi}\widetilde T,X \rangle =0$
\item $A_{\xi}X=\widetilde J \widetilde{\tau}X-X$.
\end{enumerate}
\end{lem}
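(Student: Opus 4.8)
The plan is to play the flat ambient connection $\widehat\nabla$ on $\mathbb{C}^{n+k}\cong\mathbf{R}^{2n+2k}$ (which satisfies $\widehat\nabla\widehat J=0$, with $\widehat J$ a $g_{can}$-orthogonal, hence skew-adjoint, almost complex structure) against the Gauss--Weingarten decomposition, applied to the special tangent vector $\widetilde T=\widehat J\xi$. First I would record three elementary facts. Since $|\widetilde T|_{g_{\widetilde\theta}}=1$ and $g_{\widetilde\theta}=i^*g_{can}$, while $\widehat J$ is a $g_{can}$-isometry, the normal vector $\xi$ is of unit length; consequently $\langle\nabla^\perp_X\xi,\xi\rangle=0$ for every $X$ tangent to $N$, i.e. in $\widehat\nabla_X\xi=-A_\xi X+\nabla^\perp_X\xi$ the normal part $\nabla^\perp_X\xi$ is $g_{can}$-orthogonal to $\xi$. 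Next, $\widehat J\widetilde T=\widehat J\widehat J\xi=-\xi$, and on $\widetilde H(N)$ one has $\widehat J=\widetilde J$ by (\ref{63}); thus for a tangent vector $Z=\pi_{\widetilde H}Z+\widetilde\theta(Z)\widetilde T$ we get $\widehat JZ=\widetilde J\pi_{\widetilde H}Z-\widetilde\theta(Z)\xi$, whose tangential part lies in $\widetilde H(N)$. Finally, if $\eta$ is normal with $\eta\perp\xi$, then $\widehat J\eta$ is again normal: for any tangent $Z$, $\langle\widehat J\eta,Z\rangle=-\langle\eta,\widehat JZ\rangle=-\langle\eta,\widetilde J\pi_{\widetilde H}Z\rangle+\widetilde\theta(Z)\langle\eta,\xi\rangle=0$. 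In particular $\widehat J(\nabla^\perp_X\xi)$ is normal for every tangent $X$.

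For part (i) I would compute $\widehat\nabla_{\widetilde T}\widetilde T$ in two ways. Using the Gauss formula (\ref{3}) and the conversion identity (\ref{77}), together with $\widetilde\nabla\widetilde T=0$, $\widetilde\tau\widetilde T=0$, $\widetilde J\widetilde T=0$ and $\widetilde T\lrcorner d\widetilde\theta=0$, one finds $\widetilde\nabla^\theta_{\widetilde T}\widetilde T=0$, so $\widehat\nabla_{\widetilde T}\widetilde T=h(\widetilde T,\widetilde T)$ is purely normal. On the other hand $\widehat\nabla_{\widetilde T}\widetilde T=\widehat J(\widehat\nabla_{\widetilde T}\xi)=\widehat J(-A_\xi\widetilde T+\nabla^\perp_{\widetilde T}\xi)$; writing $A_\xi\widetilde T=b\widetilde T+v$ with $v=\pi_{\widetilde H}(A_\xi\widetilde T)$ and using $\widehat J\widetilde T=-\xi$, $\widehat Jv=\widetilde Jv$, and the fact that $\widehat J(\nabla^\perp_{\widetilde T}\xi)$ is normal, the tangential part of $\widehat\nabla_{\widetilde T}\widetilde T$ is exactly $-\widetilde Jv$. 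Comparing tangential parts gives $\widetilde Jv=0$, hence $v=0$ because $\widetilde J$ is invertible on $\widetilde H(N)$; this says precisely $\langle A_\xi\widetilde T,X\rangle=0$ for all $X\in\Gamma(\widetilde H(N))$.

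For part (ii) I would run the same argument with $\widetilde T$ replaced by an arbitrary $X\in\Gamma(\widetilde H(N))$. By part (i) and self-adjointness of $A_\xi$, $\langle A_\xi X,\widetilde T\rangle=\langle A_\xi\widetilde T,X\rangle=0$, so $A_\xi X\in\widetilde H(N)$ and $\widehat J(A_\xi X)=\widetilde J(A_\xi X)$. Then $\widehat\nabla_X\widetilde T=\widehat J(\widehat\nabla_X\xi)=-\widetilde J(A_\xi X)+\widehat J(\nabla^\perp_X\xi)$ has tangential part $-\widetilde J(A_\xi X)$, while (\ref{3}) and (\ref{77}) (computed as above, now with $\widetilde\theta(X)=0$) give that the tangential part of $\widehat\nabla_X\widetilde T$ equals $\widetilde\nabla^\theta_X\widetilde T=\widetilde\tau X+\widetilde JX$. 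Equating and applying $-\widetilde J$ once more (legitimate since $A_\xi X\in\widetilde H(N)$, where $\widetilde J^2=-\mathrm{Id}$) yields $A_\xi X=\widetilde J\widetilde\tau X-X$, which is (ii).

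The main obstacle, and the point where a first attempt typically stalls, is that the most obvious comparisons collapse to tautologies: $A_\xi$ being self-adjoint and $\widetilde T\perp\xi$ makes identities such as $\langle A_\xi\widetilde T,X\rangle=\langle A_\xi X,\widetilde T\rangle$ content-free. The decisive input is the rigidity coming from $\widehat\nabla_{\widetilde T}\widetilde T$ having \emph{no} tangential component, which forces $A_\xi\widetilde T$ to be proportional to $\widetilde T$ and thereby unlocks (ii). Care is also needed with the sign conventions in (\ref{77}) and with the identity $\widehat J\widetilde T=-\xi$, since an error there propagates into a wrong sign in (ii).
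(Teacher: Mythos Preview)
Your argument is correct and follows essentially the same approach as the paper: both proofs exploit $\widetilde T=\widehat J\xi$ with $\widehat\nabla\widehat J=0$, obtain (i) from the fact that $\widehat\nabla_{\widetilde T}\widetilde T=h(\widetilde T,\widetilde T)$ is purely normal, and obtain (ii) by comparing the tangential parts of $\widehat\nabla_X\widetilde T$ computed via the Gauss and Weingarten formulas, using (i) to control the $\widetilde T$-component. Your upfront observation that $\widehat J$ sends $\{\eta\in T^\perp N:\eta\perp\xi\}$ into $T^\perp N$, together with the use of self-adjointness of $A_\xi$ to get $A_\xi X\in\widetilde H(N)$ before computing, is a clean repackaging of what the paper does by checking afterward that $[\widehat J\,h(\widetilde T,X)]^\top=0$; the substance is the same.
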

\begin{proof}\quad Firstly, let us set $X,Y=\widetilde T $ in (\ref{3}) to obtain (since$\widetilde{\nabla}_{\widetilde T}^{\theta}\widetilde T=0$)
\begin{align}
\widehat{\nabla}_{\widetilde T}\widetilde T =\widetilde{\nabla}_{\widetilde T}^{\theta}\widetilde T+h(\widetilde T,\widetilde T)
=h(\widetilde T,\widetilde  T). \label{62}
\end{align}
Using (\ref{61}), for any $X \in \Gamma(\widetilde{H}(N))$ we have
\begin{align}
 \langle A_{\xi}\widetilde T,X \rangle =- \langle (\widehat{\nabla}_{\widetilde T}{\xi})^{\top},X \rangle
= \langle \widehat{\nabla}_{\widetilde T}\widehat{J}\widetilde T,X \rangle .\label{64}
\end{align}
\indent Note that $\widehat{\nabla}\widehat{J}=0$ and $\widehat{J}^2=-1$. By (\ref{63}) and (\ref{62}), (\ref{64}) may be written as
\begin{align}
 \langle A_{\xi}\widetilde T,X \rangle
=- \langle \widehat{\nabla}_{\widetilde T}\widetilde T,\widetilde J X \rangle
=- \langle h(\widetilde T,\widetilde T),\widetilde{J}X \rangle .
\end{align}
Since $h(\widetilde T,\widetilde T)\in\Gamma(T^{\bot}N)$ and $\widetilde JX\in\Gamma(TN)$, $(i)$ is proved. \\
\indent Next we may use (\ref{77}) and $\widetilde{\nabla}\widetilde T=0$ to perform the following calculations:
\begin{align}
\widehat{\nabla}_{\widetilde T}X&=\widetilde{\nabla}_{\widetilde T}^{\theta}X+h(\widetilde T,X) \nonumber \\
&=\widetilde{\nabla}_{\widetilde T}X+\widetilde JX+h(\widetilde T,X) \nonumber \\
&=\widetilde{\tau}X+[\widetilde T,X]+\widetilde J X+h(\widetilde T,X).
\end{align}
Using the fact that $\widehat{\nabla}$ is torsion free, the above identity becomes
\begin{align}
\widehat{\nabla}_X \widetilde T=\widehat{\nabla}_{\widetilde T}X-[\widetilde T,X]
=\widetilde{\tau}X+\widetilde JX+h(\widetilde T,X).\label{66}
\end{align}
By (\ref{61}) and $\widehat{\nabla}\widehat{J}=0$ we may write (\ref{66}) as
\begin{align}
-\widehat{\nabla}_X \xi=\widetilde J\widetilde{\tau}X-X+\widehat{J}h(\widetilde T,X); \nonumber
\end{align}
hence
\begin{align}
A_{\xi}X=\widetilde{J}\widetilde{\tau}X-X+[\widehat{J}h(\widetilde T,X)]^{\top}. \label{69}
\end{align}
\indent For any $Y \in\Gamma(\widetilde H(N))$. Since $h(\widetilde T,X)\in\Gamma(T^{\bot}N)\ and\  \widehat{J}^2=-1$, by (\ref{63}) we have
\begin{align}
 \langle \widehat{J}h(\widetilde T,X),Y \rangle =- \langle h(\widetilde T,X), \widetilde JY \rangle =0. \label{68}
\end{align}
\indent On the other hand, Using $\widehat{J}^2=-1$ again, by (\ref{63}), (\ref{61}) and (\ref{15}) a computation shows that
\begin{align}
& \langle \widehat Jh(\widetilde T,X),\widetilde T \rangle = \langle A_{\xi}\widetilde T,X \rangle .
\end{align}
We may conclude that
\begin{align}
 \langle \widehat{J} h(\widetilde T,X),\widetilde T \rangle =0 \label{67}
\end{align}
due to $(i)$.
It follows from (\ref{68}) and (\ref{67}) that $[\widehat{J}h(\widetilde T,X)]^{\top}=0$.
Substitution into (\ref{69}) shows that $ A_{\xi}X= \widetilde J \widetilde{\tau}X-X$.
\end{proof}
\indent Let $a$ be a vector field in $\mathbb{C}^{n+k}(\cong \mathbf{R}^{2n+2k})$. We define a vector field
$a^{\top}$ tangent to N and a vector field $a^{\bot}$ normal to N .\\
\indent According to the decomposition $TN=\widetilde{H}(N)\oplus \mathbf{R}\widetilde T$ we may write $a^{\top}$ as
\begin{align}
a^{\top}=\pi_{\widetilde{H}}a^{\top}+ \langle a,\widetilde T \rangle \widetilde T. \nonumber
\end{align}
Let us set $a_{\widetilde H}=\pi_{\widetilde H}a^{\top}$. Then we have
\begin{align}
a_{\widetilde H}
&=a-a^{\bot}- \langle a,\widetilde{T} \rangle  \widetilde{T}, \label{85}
\end{align}
where $a_{\widetilde H}\in \Gamma(\widetilde{H}(N))$. \\
\\
\indent To prove the main result in this section we start with the following lemma.

\begin{lem}\quad\label{7}
Assume that $a$ is a constant vector field. For the vector field $a_{\widetilde H}$ defined above , we have
\begin{align}
\widetilde{\nabla}_X a_{\widetilde{H}} =A_{a^{\perp}}X- \langle A_{a^{\perp}}X,\widetilde{T} \rangle \widetilde{T}- \langle a,\widetilde{T} \rangle \widetilde{\tau} X- \langle a,\widetilde{T} \rangle \widetilde{J}X \nonumber
\end{align}
for any $ X\in \Gamma (\widetilde{H}(N))$.
\end{lem}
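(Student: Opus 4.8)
The plan is to compute the \emph{ambient} covariant derivative $\widehat{\nabla}_X a_{\widetilde{H}}$ in $\mathbb{C}^{n+k}$ and then read off its component along the Levi distribution $\widetilde{H}(N)$, because that component will turn out to equal $\widetilde{\nabla}_X a_{\widetilde{H}}$. So the first preliminary step I would record is: for \emph{any} $Y\in\Gamma(\widetilde{H}(N))$ one has $\pi_{\widetilde{H}}(\widehat{\nabla}_X Y)=\widetilde{\nabla}_X Y$. This follows by combining the Gauss formula (\ref{3}) with the relation (\ref{77}) (applied on $N$) between the Levi-Civita connection $\widetilde{\nabla}^{\theta}$ and the Tanaka-Webster connection $\widetilde{\nabla}$: every term of the difference tensor in (\ref{77}) other than the multiple of $\widetilde{T}$ carries a factor $\widetilde{\theta}(X)$ or $\widetilde{\theta}(Y)$, both of which vanish since $X,Y$ are horizontal, while $h(X,Y)$ is normal; hence $\widehat{\nabla}_X Y-\widetilde{\nabla}_X Y\in\mathbf{R}\widetilde{T}\oplus T^{\perp}N$, and since $\widetilde{\nabla}_X Y\in\Gamma(\widetilde{H}(N))$ by Theorem \ref{80}(i), projecting onto $\widetilde{H}(N)$ recovers it.

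Next I would differentiate the expression (\ref{85}) for $a_{\widetilde{H}}$. Since $a$ is constant and $\widehat{\nabla}$ is flat, $\widehat{\nabla}_X a=0$, so
\[
\widehat{\nabla}_X a_{\widetilde{H}}=-\widehat{\nabla}_X a^{\perp}-X\!\left(\langle a,\widetilde{T}\rangle\right)\widetilde{T}-\langle a,\widetilde{T}\rangle\,\widehat{\nabla}_X\widetilde{T}.
\]
The first term I rewrite via the Weingarten formula $\widehat{\nabla}_X a^{\perp}=-A_{a^{\perp}}X+\nabla^{\perp}_X a^{\perp}$, and the last term via the identity $\widehat{\nabla}_X\widetilde{T}=\widetilde{\tau}X+\widetilde{J}X+h(\widetilde{T},X)$, which is exactly (\ref{66}) established inside the proof of Lemma \ref{88}.

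Now apply $\pi_{\widetilde{H}}$ to both sides. By the preliminary step the left side becomes $\widetilde{\nabla}_X a_{\widetilde{H}}$. On the right side $\nabla^{\perp}_X a^{\perp}$ and $h(\widetilde{T},X)$ are normal, the two explicit $\widetilde{T}$-terms are annihilated, and what survives is $\pi_{\widetilde{H}}(A_{a^{\perp}}X)=A_{a^{\perp}}X-\langle A_{a^{\perp}}X,\widetilde{T}\rangle\widetilde{T}$ together with $-\langle a,\widetilde{T}\rangle(\widetilde{\tau}X+\widetilde{J}X)$ (here $\widetilde{\tau}X,\widetilde{J}X$ already lie in $\widetilde{H}(N)$). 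Collecting terms yields
\[
\widetilde{\nabla}_X a_{\widetilde{H}}=A_{a^{\perp}}X-\langle A_{a^{\perp}}X,\widetilde{T}\rangle\widetilde{T}-\langle a,\widetilde{T}\rangle\widetilde{\tau}X-\langle a,\widetilde{T}\rangle\widetilde{J}X,
\]
which is the asserted formula.

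The main obstacle, modest as it is, is the first step: one must check carefully that all the Levi-Civita--to--Tanaka-Webster correction terms in (\ref{77}) \emph{and} the second fundamental form fall outside $\widetilde{H}(N)$ once $X,Y$ are horizontal, so that taking the $\widetilde{H}$-component of the ambient derivative genuinely returns $\widetilde{\nabla}_X a_{\widetilde{H}}$. After that, the remainder is a routine substitution of the Weingarten formula and (\ref{66}), followed by separating the tangential/normal and $\widetilde{H}$/$\widetilde{T}$ parts.
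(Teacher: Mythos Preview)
Your proof is correct and follows essentially the same route as the paper: differentiate the decomposition (\ref{85}) using $\widehat{\nabla}a=0$, invoke the Weingarten formula and the identity (\ref{66}), and pass between $\widehat{\nabla}$, $\widetilde{\nabla}^{\theta}$, and $\widetilde{\nabla}$ via (\ref{77}). The only cosmetic difference is that you isolate the observation $\pi_{\widetilde{H}}(\widehat{\nabla}_X Y)=\widetilde{\nabla}_X Y$ for horizontal $X,Y$ at the outset and then project, whereas the paper carries the $\widetilde{T}$-correction terms from (\ref{77}) along and cancels them explicitly at the last step.
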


\begin{proof}\quad Note that $G_{\widetilde{\theta}}=\frac{1}{2}d\widetilde{\theta}(\cdot,\widetilde J \cdot)$ and $\widehat{\nabla}a=0$ .
Then using (\ref{77}) and (\ref{66}) we perform the following calculations:
\begin{align}
\widetilde{\nabla}_{X}a_{\widetilde H}&=\widetilde{\nabla}_X^{\theta}a_{\widetilde H} + \frac{1}{2} d \widetilde{\theta}(X,a_{\widetilde H})\widetilde{T}+\widetilde{A}(X,a_{H}) \widetilde{T} \nonumber \\
&=[\widehat{\nabla}_{X}(a-a^{\bot}-  \langle a,\widetilde{T}  \rangle  \widetilde{T})]^{\top}+ \langle \widetilde{J}X,a \rangle  \widetilde{T}+ \langle \widetilde{\tau}X,a \rangle \widetilde{T} \nonumber \\
&=(- \widehat{\nabla}_{X}a^{\bot})^{\top}-X \langle a,\widetilde{T} \rangle  \widetilde{T}- \langle a,\widetilde{T} \rangle (\widehat{\nabla}_X \widetilde{T})^{\top}+ \langle \widetilde{J}X,a \rangle  \widetilde{T}
+ \langle \widetilde{\tau}X,a \rangle \widetilde{T} \nonumber \\
&=A_{a^{\bot}}X- \langle a,\widehat{\nabla}_{X} \widetilde{T} \rangle \widetilde T- \langle a,\widetilde{T} \rangle ( \widetilde{\tau}X+\widetilde{J}X)+ \langle \widetilde{J}X,a \rangle  \widetilde{T}
 + \langle \widetilde{\tau}X,a \rangle \widetilde{T} \nonumber \\
&=A_{a^{\bot}}X- \langle A_{a^{\bot}}X, \widetilde{T} \rangle  \widetilde{T}- \langle a,\widetilde{T} \rangle  \widetilde{\tau}X- \langle a,\widetilde{T} \rangle  \widetilde{J}X. \nonumber
\end{align}
\end{proof}
\indent Now we investigate the unstability of horizontal pseudoharmonic maps from a closed pseudo-Hermitian manifold $(M^{2m+1},H(M),J,\theta)$ into an isometric embedded CR manifold $(N^{2n+1},\widetilde{H} (N), \widetilde{J},\widetilde {\theta} )$ .\\
\\
\indent Let $f$: ($M,H(M),J,\theta $)$\rightarrow$ ($N,\widetilde{H}(N),\widetilde{J},\widetilde{\theta}$) be a horizontal pseudoharmonic map between M and N. Let $a$ be a constant vector field in $\mathbb{C}^{n+k}(\cong\mathbf{R}^{2n+2k})$. We use $\varphi_{t}$($|t|<\epsilon$) to denote the flow or one-parameter group of diffeomorphisms generated by $a_{\widetilde H}$. For the horizontal variational vector field $a_{\widetilde H}$ along $f$, the one-parameter variation is $f_{t}= \varphi_{t}\circ f$ with $\ f_0=f,\ \frac{\partial f_t}{\partial t}|_{t=0}=a_{\widetilde H}$ . Since $a_{\widetilde H}$ is a horizontal vector field, then $\frac{\partial f_t }{\partial t}=\frac{\partial \varphi_{t} }{\partial t}\circ f$ is horizontal for all $t$. \\
\indent By Corollary \ref{86}, the second variation formula can be written in the following form:
\begin{align}
\left. \frac{d^2}{d t^2}\right |_{t=0} E_{H,\widetilde{H}}(f_t)
&=\int_{M} \sum_{\lambda =1}^{2m}[|\widetilde{\nabla}_{df(e_{\lambda})}a_{\widetilde H}|^2-\widetilde R (a_{\widetilde{H}},df(e_{\lambda}),a_{\widetilde {H}},df(e_{\lambda}))\nonumber \\
&\quad -2 \langle df(e_{\lambda}),\widetilde {\tau} a_{\widetilde {H}} \rangle  \langle df(e_{\lambda}),\widetilde{J} a_{\widetilde H} \rangle ]dv_{\widetilde {\theta}} \label{6}
\end{align}
Here $\{e_{\lambda}\}_{\lambda=1}^{2m}$ is a local orthonormal frame of H(M).
\begin{thm}\quad \label{107}
Let $f:(M,H(M),J,\theta)\rightarrow (N,\widetilde{H}(N),\widetilde{J},\widetilde{\theta})$ be a nonconstant horizontal pseudoharmonic map from a closed pseudo-Hermitian manifold into an isometric embedded CR manifold. Assume that $Q_{y}^{N}$ is negative definite on $\widetilde{H}(N)$ at each point $y$ of N(i.e. $ \langle Q^{N}X,X \rangle \; < 0$ for all $X\neq0$ and $X\in\Gamma(\widetilde{H}(N).)$
Then $f$ is unstable.
\end{thm}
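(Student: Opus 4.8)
I will run a Leung-type averaging argument, using the constant vector fields of the ambient space $\mathbb{C}^{n+k}\cong\mathbf{R}^{2n+2k}$ to produce test variations. Fix an orthonormal basis $\{a_1,\dots,a_{2n+2k}\}$ of $\mathbf{R}^{2n+2k}$, regarded as constant (hence $\widehat{\nabla}$-parallel) vector fields, and for each $A$ let $\varphi_t^A$ be the flow on $N$ of $a_{A,\widetilde H}$ and set $f_t^A=\varphi_t^A\circ f$. Since $a_{A,\widetilde H}\in\Gamma(\widetilde H(N))$, $\{f_t^A\}$ is a horizontal variation of $f$ with variation field $a_{A,\widetilde H}$; and since $f$ is horizontal we have $\widetilde\theta(df(e_\lambda))=0$ and $df(e_\lambda)\in\widetilde H(N)$, so by Corollary \ref{86} the quantity $H_f(a_{A,\widetilde H},a_{A,\widetilde H})$ is given by $(\ref{6})$. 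The core of the proof is to add up these $2n+2k$ quantities and establish
\[
\sum_{A=1}^{2n+2k}H_f(a_{A,\widetilde H},a_{A,\widetilde H})=\int_M\sum_{\lambda=1}^{2m}\bigl\langle Q^N_{f(x)}(df(e_\lambda)),df(e_\lambda)\bigr\rangle\,dv_{\widetilde\theta}.
\]

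The summation relies on the completeness identity $\sum_A\langle a_A,u\rangle\langle a_A,v\rangle=\langle u,v\rangle$ for $u,v\in\mathbf{R}^{2n+2k}$, together with the pointwise orthogonal splitting $\mathbf{R}^{2n+2k}\supset T_yN=\widetilde H_y(N)\oplus\mathbf{R}\widetilde T$ and $T_y^{\perp}N$. Writing $a_A=a_{A,\widetilde H}+\langle a_A,\widetilde T\rangle\widetilde T+a_A^{\perp}$ and using $(\ref{85})$, these yield $\sum_A a_{A,\widetilde H}\otimes a_{A,\widetilde H}=\mathrm{Id}$ on $\widetilde H_y(N)$, $\sum_A a_A^{\perp}\otimes a_A^{\perp}=\mathrm{Id}$ on $T_y^{\perp}N$, $\sum_A\langle a_A,\widetilde T\rangle^2=1$, and $\sum_A\langle a_A,\widetilde T\rangle a_{A,\widetilde H}=\sum_A\langle a_A,\widetilde T\rangle a_A^{\perp}=0$, $\sum_A a_{A,\widetilde H}\otimes a_A^{\perp}=0$. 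Plugging Lemma \ref{7} into the term $|\widetilde\nabla_{df(e_\lambda)}a_{A,\widetilde H}|^2$ and summing over $A$, every cross term carrying a factor $\langle a_A,\widetilde T\rangle$ together with $a_A^{\perp}$ drops out, leaving, with $X:=df(e_\lambda)$, the contribution $\sum_\alpha\langle(\pi_{\widetilde H}A_{v_\alpha})^2X,X\rangle+|\widetilde\tau X+\widetilde JX|^2$. Expanding the Gauss-type formula $(\ref{8})$ for the curvature term and summing, using $trace_{G_{\widetilde\theta}}\widetilde\tau=0$, contributes $\sum_\alpha trace_{G_{\widetilde\theta}}(A_{v_\alpha})\langle A_{v_\alpha}X,X\rangle-\sum_\alpha\langle(\pi_{\widetilde H}A_{v_\alpha})^2X,X\rangle+3|X|^2-|\widetilde\tau X|^2$; and the last term of $(\ref{6})$ sums to $2\langle\widetilde\tau X,\widetilde JX\rangle$. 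Collecting everything, the integrand of the summed second variation becomes
\[
2\sum_\alpha\langle(\pi_{\widetilde H}A_{v_\alpha})^2X,X\rangle-\sum_\alpha trace_{G_{\widetilde\theta}}(A_{v_\alpha})\langle A_{v_\alpha}X,X\rangle-2|X|^2+2|\widetilde\tau X|^2+4\langle\widetilde\tau X,\widetilde JX\rangle .
\]

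To recognize the last three terms I would invoke Lemma \ref{88}(ii): for $X\in\widetilde H(N)$, $A_\xi X=\widetilde J\widetilde\tau X-X$, so $\langle A_\xi^2X,X\rangle=|\widetilde J\widetilde\tau X-X|^2=|\widetilde\tau X|^2+2\langle\widetilde\tau X,\widetilde JX\rangle+|X|^2$, whence $\langle(2A_\xi^2-4\,\mathrm{Id})X,X\rangle=2|\widetilde\tau X|^2+4\langle\widetilde\tau X,\widetilde JX\rangle-2|X|^2$, which is exactly the residual. Hence the summed integrand equals $\langle Q^N_{f(x)}X,X\rangle$, which proves the displayed identity. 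Now $df(e_\lambda)\in\widetilde H(N)$ because $f$ is horizontal, and $Q^N$ is negative definite on $\widetilde H(N)$, so the right-hand side is $\le0$ everywhere and $<0$ at any point where some $df(e_\lambda)\ne0$. Such a point exists: if $df$ vanished identically on $H(M)$, then since $L_\theta$ is nondegenerate we have $H(M)+[H(M),H(M)]=TM$, and Lemma \ref{82} gives $df([X,Y])=\widetilde\nabla_Xdf(Y)-\widetilde\nabla_Ydf(X)=0$ for all $X,Y\in\Gamma(H(M))$, forcing $df\equiv0$ and $f$ constant, against the hypothesis. Therefore $\sum_AH_f(a_{A,\widetilde H},a_{A,\widetilde H})<0$, so $H_f(a_{A,\widetilde H},a_{A,\widetilde H})<0$ for at least one $A$, and $f$ is unstable.

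The main obstacle is the bookkeeping in the second and third paragraphs: one must check that precisely the $\langle a_A,\widetilde T\rangle$- and $a_A^{\perp}$-cross terms cancel after summation, that the two sources of $(\pi_{\widetilde H}A_{v_\alpha})^2$ (from Lemma \ref{7} and from the Gauss term of $(\ref{8})$) combine with coefficient $2$, and that the leftover $\widetilde\tau$- and identity-terms reassemble into $2A_\xi^2-4\,\mathrm{Id}$ through Lemma \ref{88}. That last identification, rather than the lengthy but routine summations, is the genuinely non-mechanical point.
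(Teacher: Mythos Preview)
Your proposal is correct and follows essentially the same approach as the paper: a Leung-type averaging over an orthonormal basis of $\mathbf{R}^{2n+2k}$, using Lemma~\ref{7} for the covariant-derivative term, formula~(\ref{8}) for the curvature term, and Lemma~\ref{88}(ii) to recast the residual $\widetilde\tau$- and identity-terms as $2A_\xi^2-4\,\mathrm{Id}$. The only cosmetic difference is that the paper picks, at each fixed point $y$, an adapted basis with $a_1,\dots,a_{2n}\in\widetilde H_y(N)$, $a_{2n+1}=\widetilde T_y$, $a_{2n+2},\dots,a_{2n+2k}\in T_y^\perp N$ (so the cross terms vanish trivially at $y$), whereas you keep a fixed global basis and use the completeness relations; these are equivalent bookkeeping devices and yield the same identity $\sum_A H_f((a_A)_{\widetilde H},(a_A)_{\widetilde H})=\int_M\sum_\lambda\langle Q^N df(e_\lambda),df(e_\lambda)\rangle\,dv_\theta$.
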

\begin{proof}\quad  Let $f: M\rightarrow N$ be a nonconstant horizontal pseudoharmonic map. We consider the horizontal vector field $a_{\widetilde H}$ on N as above . By Definition \ref{87} and using (\ref{6}), we have
\begin{align}
H_{f}(a_{\widetilde H},a_{\widetilde H})&= \frac {d^2 E_{H,\widetilde H}(f_t)} {dt^2}|_{t=0} \nonumber \\
&=\int_{M} \sum_{\lambda=1}^{2m}[|\widetilde{\nabla}_{df(e_{\lambda})}a_{\widetilde H}|^2-\widetilde R (a_{\widetilde{H}},df(e_{\lambda}),a_{\widetilde {H}},df(e_{\lambda}))\nonumber \\
&\quad -2 \langle df(e_{\lambda}),\widetilde {\tau} a_{\widetilde {H}} \rangle  \langle df(e_{\lambda},\widetilde{J} a_{\widetilde H}) \rangle ]dv_{\widetilde {\theta}},
\end{align}
where $\{e_{\lambda} \}_{\lambda=1}^{2m}$ is a local orthonormal frame of $H(M)$. Since $f$ is horizontal and by Lemma \ref{7}, we get
\begin{align}
\widetilde{\nabla}_{df(e_{\lambda})} a_{\widetilde{H}}=A _{a^{\bot}}df(e_{\lambda})-\langle A_{a^{\bot}}df(e_{\lambda}),\widetilde T \rangle  \widetilde T - \langle a,\widetilde T \rangle  \widetilde {\tau} df(e_{\lambda})- \langle a,\widetilde T \rangle \widetilde J df(e_{\lambda}) \nonumber
\end{align}
and thus
\begin{align}
\left. |\widetilde{\nabla}_{df(e_{\lambda})}a_{\widetilde{H}}|^2 \right.
&= \langle A_{a^{\bot}}df(e_{\lambda}),A_{a^{\bot}}df(e_{\lambda}) \rangle - \langle A_{a^{\bot}} df(e_{\lambda}) ,\widetilde{T}  \rangle ^2  \nonumber \\
+  \langle a,\widetilde{T} \rangle ^2& \langle  \widetilde{\tau}^2 df(e_{\lambda}),df(e_{\lambda}) \rangle  + \langle a,\widetilde{T} \rangle ^2|df(e_{\lambda})|^2 \nonumber \\
-2 \langle a,\widetilde{T} \rangle & \langle A_{a^{\bot}}df(e_{\lambda}),\widetilde{\tau}df(e_{\lambda}) \rangle
-2 \langle A_{a^{\bot}}df(e_{\lambda}),
\widetilde{J} df(e_{\lambda}) \rangle  \langle a,\widetilde{T} \rangle   \nonumber \\
+2 \langle a,\widetilde{T} \rangle ^2& \langle \widetilde{\tau} \widetilde{J}df(e_{\lambda}),df(e_{\lambda}) \rangle  \label{9}
\end{align}
Next using (\ref{8}), we derive that
\begin{align}
&\widetilde{R} (a_{\widetilde{H}},df(e_{\lambda}),a_{\widetilde{H}},df(e_{\lambda})) \nonumber \\
&= \langle h(a_{\widetilde H},a_{\widetilde{H}}),h(df(e_{\lambda}),df(e_{\lambda})) \rangle -|h(a_{\widetilde{H}},df(e_{\lambda}))|^2
+3 \langle \widetilde{J}a_{\widetilde H},df(e_{\lambda}) \rangle ^2 \nonumber \\
&- \langle a_{\widetilde{H}},\widetilde{\tau}df(e_{\lambda}) \rangle ^2
+ \langle \widetilde{\tau}a_{\widetilde H},a_{\widetilde{H}} \rangle  \langle \widetilde{\tau}df(e_{\lambda}),df(e_{\lambda}) \rangle  \nonumber \\
&= \langle A_{h(a_{\widetilde {H}},a_{\widetilde {H}})} df(e_{\lambda}),df(e_{\lambda}) \rangle -|h(a_{\widetilde{H}},df(e_{\lambda}))|^2
+3 \langle a_{\widetilde H},\widetilde {J} df(e_{\lambda}) \rangle ^2 \nonumber \\
&- \langle a_{\widetilde{H}},\widetilde{\tau}df(e_{\lambda}) \rangle ^2
+ \langle \widetilde{\tau}a_{\widetilde H},a_{\widetilde{H}} \rangle  \langle \widetilde{\tau}df(e_{\lambda}),df(e_{\lambda}) \rangle  \label{10}
\end{align}
\indent For any fixed point $y$, we choose a real orthonormal basis \{$a_1,\ldots,a_{2n+2k}$\} of $\mathbb{C}^{n+k} \cong  {\mathbf{R}^{2n+2k}}$ such that $\{a_1, \ldots ,a_{2n}\}|_y$ is a basis of $\widetilde {H}_y(N)$ , $a_{2n+1}|_y=\widetilde {T}_y$ and $\{a_{2n+2},\ldots,a_{2n+2k}\}|_y$ is a basis of $T^{\perp}_{y}N$.
Then we use $a_i$ $(i=1,\ldots,2n+2k)$ to construct the vector field $(a_i)_{\widetilde {H}}$ as above. By a direct computation at $y$ and using (\ref{9}), (\ref{10}) and $trace\widetilde{\tau}=0$, we may get
\begin{align}
&\sum_{i=1}^{2n+2k} \langle \widetilde{\nabla}_{df(e_{\lambda})}(a_{i})_{\widetilde{H}},\widetilde{\nabla}_{df(e_{\lambda})}(a_{i})_{\widetilde{H}} \rangle  \nonumber \\
&=
\sum_{i =2n+2}^{2n+2k}[ \langle A_{a_{i}}df(e_{\lambda}),A_{a_{i}}df(e_{\lambda}) \rangle - \langle A_{a_{i}}df(e_{\lambda}),\widetilde{T} \rangle ] \nonumber \\
&+ \langle \widetilde{\tau}^2 df(e_{\lambda}),df(e_{\lambda}) \rangle +|df(e_{\lambda})|^2+2 \langle \widetilde {\tau} \widetilde {J}df(e_{\lambda}),df(e_{\lambda}) \rangle   \label{14}
\end{align}
and
\begin{align}
\sum_{i=1}^{2n+2k}\widetilde {R} ((a_i)_{\widetilde H},df(e_{\lambda}),(a_i)_{\widetilde H},df(e_{\lambda}))
&=\sum_{i=1}^{2n}[ \langle A_{h(a_i,a_i)}df(e_{\lambda}),df(e_{\lambda}) \rangle  \nonumber \\
-|h(a_i,df&(e_{\lambda}))|^2] +3|df(e_{\lambda})|^2-|\widetilde{\tau}df(e_{\lambda})|^2  \label{12}
\end{align}
Using $\widetilde J^2=-1$, a easy calculation shows
\begin{align}
\sum_{i=1}^{2n+2k}-2 \langle df(e_{\lambda}),\widetilde{\tau}(a_i)_{\widetilde{H}} \rangle  \langle df(e_{\lambda}),\widetilde{J}(a_i)_{\widetilde H} \rangle
=2 \langle df(e_{\lambda}),\widetilde{\tau} \widetilde {J}df(e_{\lambda}) \rangle   \label{13}
\end{align}
It follows from (\ref{14}),(\ref{12}) and (\ref{13}) that
\begin{align}
&\sum_{i=1}^{2n+2k} H_{f} ((a_i)_{\widetilde H},(a_i)_{\widetilde H}) \nonumber \\
&=\int_M \sum_{\lambda=1}^{2m}\{\sum_{i=2n+2}^{2n+2k}[ \langle \pi_{\widetilde{H}}A_{a_i}df(e_{\lambda}),\pi_{\widetilde{H}}A_{a_i}df(e_{\lambda}) \rangle ]
-\sum_{j=1}^{2n} \langle A_{h(a_j,a_j)}df(e_{\lambda}),df(e_{\lambda}) \rangle  \nonumber \\
&\ \ \ \ +\sum_{j=1}^{2n} \langle h(a_j,df(e_{\lambda})),h(a_j,df(e_{\lambda})) \rangle ]
+2 \langle \widetilde {\tau}^2df(e_{\lambda}),df(e_{\lambda}) \rangle
-2|df(e_{\lambda})|^2+4 \langle \widetilde{\tau} \widetilde{J} df (e_{\lambda}),df(e_{\lambda}) \rangle \}dv_{\theta} \label{17}
\end{align}
By (\ref{15}) and the choice of the basis, we obtain
\begin{align}
\sum_{j=1}^{2n} \langle h(a_j,df(e_{\lambda})),h(a_j,df(e_{\lambda})) \rangle
&=\sum_{j=1}^{2n}\sum_{i=2n+2}^{2n+2k} \langle A_{a_i}df(e_{\lambda}),a_j \rangle ^2
=\sum_{i=2n+2}^{2n+2k} \langle \pi_{ \widetilde{H}} A_{a_i}df(e_{\lambda}),\pi_{ \widetilde{H}} A_{a_i}df(e_{\lambda}) \rangle . \label{18}
\end{align}
Using (\ref{15}) again, we perform the following calculations:
\begin{align}
\sum_{j=1}^{2n} \langle A_{h(a_j,a_j)}df(e_{\lambda}),df(e_{\lambda}) \rangle
&=\sum_{j=1}^{2n} \sum_{i=2n+2}^{2n+2k} \langle A_{a_i} a_j,a_j \rangle  \langle A_{a_i} df(e_{\lambda}),df(e_{\lambda}) \rangle  \nonumber \\
&=\sum_{i=2n+2}^{2n+2k} \langle (trace_{G_{\widetilde {\theta}}}A_{a_i}) A_{a_i}df(e_{\lambda}),df(e_{\lambda}) \rangle    \label{19}
\end{align}
By $(ii)$ in Lemma \ref{88} and (\ref{75}) ,we have
\begin{align}
\widetilde {\tau}^2X=(\widetilde {\tau} \widetilde{J})\cdot(\widetilde{\tau} \widetilde {J})X
&=(-A_{\xi}-Id)\cdot(-A_{\xi}-Id)X \nonumber \\
&=(A_{\xi}^2+2A_{\xi}+Id)X  \label{16}
\end{align}
for any $X \in \Gamma(\widetilde H(N))$.
It follows from (\ref{16}) that
\begin{align}
&2 \langle \widetilde {\tau}^2 df(e_{\lambda}),df(e_{\lambda}) \rangle -2|df(e_{\lambda})|^2+4 \langle \widetilde {\tau} \widetilde{J}df(e_{\lambda}),df(e_{\lambda}) \rangle    \nonumber \\
&=2 \langle A_{\xi}^2 df(e_{\lambda})+2A_{\xi}df(e_{\lambda})+df(e_{\lambda}),df(e_{\lambda}) \rangle -2|df(e_{\lambda})|^2 \nonumber \\
& \: \;\;\;-4 \langle A_{\xi} df(e_{\lambda}),df(e_{\lambda}) \rangle -4|df(e_{\lambda})|^2 \nonumber \\
&= \langle (2A_{\xi}^2-4Id)df(e_{\lambda}),df(e_{\lambda}) \rangle  \label{20}
\end{align}
Finally substituting (\ref{18}), (\ref{19}), (\ref{20}) into (\ref{17}), we get
\begin{align}
\sum_{i=1}^{2n+2k}H_f((a_i)_{\widetilde H},(a_i)_{\widetilde H})=\int_M \sum_{\lambda=1}^{2m} \langle Q^N(df(e_{\lambda})),df(e_{\lambda}) \rangle dv_{\theta} \label{101}
\end{align}
Under the assumption, $\int_M \sum_{\lambda=1}^{2m} \langle Q^N(df(e_{\lambda})),df(e_{\lambda}) \rangle dv_{\theta}$ is negative. We see that at least one $H_f((a_i)_{\widetilde H},(a_i)_{\widetilde H})$ must be negative. Then by Definition \ref{87}, $f$ is unstable.
\end{proof}
\begin{rmk}\quad \label{103}
By (\ref{101}), we observe that the condition $ \langle Q^{N}X,X \rangle < 0$ can be relaxed in such a way that
\begin{align}
\sum_{\lambda=1}^{2m} \langle Q^N(df(e_{\lambda})),df(e_{\lambda}) \rangle  < 0. \nonumber
\end{align}
Here $\{e_{\lambda}\}_{\lambda=1}^{2m}$ is a local orthonormal frame of $H(M)$.
\end{rmk}
Using Theorem \ref{107}, we may recapture the result in [1] as follows:
\begin{cor}(cf.[1])\quad\label{98}
Suppose $f:(M,H(M),J,\theta)\rightarrow(S^{2n+1},\widetilde{H},\widetilde J, \widetilde{\theta})$ is a nonconstant horizontal pseudoharmonic map from a closed pseudo-Hermitian manifold to the odd dimensional sphere. Then $f$ is unstable.
\end{cor}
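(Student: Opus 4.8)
The plan is to deduce this directly from Theorem \ref{107}: since $f$ is a nonconstant horizontal pseudoharmonic map from a closed pseudo-Hermitian manifold into the isometric embedded CR manifold $i:S^{2n+1}\hookrightarrow\mathbb{C}^{n+1}$ (Example \ref{91}), it suffices to check that the operator $Q^{N}_{y}$ attached to this embedding is negative definite on $\widetilde H_{y}(S^{2n+1})$ at every point $y$.

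First I would compute the second fundamental form of the unit sphere. Here the real codimension in $\mathbb{C}^{n+1}\cong\mathbf{R}^{2n+2}$ is one ($k=1$), so the normal space at $y$ is spanned by the single unit vector $\nu_{y}=y$ (the position vector). From $\widehat\nabla_{X}\nu=X$ one reads off $A_{\nu}=-\,\mathrm{Id}$ on $T_{y}N$; hence $\pi_{\widetilde H}A_{\nu}=-\,\mathrm{Id}$ on $\widetilde H_{y}(N)$, so $(\pi_{\widetilde H}A_{\nu})^{2}=\mathrm{Id}$, and $\mathrm{trace}_{G_{\widetilde\theta}}(A_{\nu})=\sum_{j=1}^{2n}\langle A_{\nu}X_{j},X_{j}\rangle=-2n$. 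For the vector field $\xi$ of (\ref{61}) I would use that $S^{2n+1}$ is Sasakian, so $\widetilde\tau=0$; then Lemma \ref{88}(ii) gives $A_{\xi}X=\widetilde J\widetilde\tau X-X=-X$ for $X\in\widetilde H(N)$, i.e. $A_{\xi}=-\,\mathrm{Id}$ and $A_{\xi}^{2}=\mathrm{Id}$ on $\widetilde H(N)$ (equivalently, $\widetilde T=\pm\widehat J\nu$ forces $\xi=\pm\nu$, with the same outcome).

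Next I would substitute these into the definition of $Q^{N}_{y}$. Since the sum over the normal frame reduces to the single index $\alpha=2n+2$, we get on $\widetilde H_{y}(N)$
\[
Q^{N}_{y}=2(\pi_{\widetilde H}A_{\nu})^{2}-\mathrm{trace}_{G_{\widetilde\theta}}(A_{\nu})\,A_{\nu}+2A_{\xi}^{2}-4\,\mathrm{Id}=\bigl(2-2n+2-4\bigr)\mathrm{Id}=-2n\,\mathrm{Id}.
\]
Since $n\geq1$, this is negative definite, i.e. $\langle Q^{N}X,X\rangle=-2n\,|X|^{2}<0$ for all $0\neq X\in\widetilde H(N)$. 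Theorem \ref{107} then immediately yields that $f$ is unstable.

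The argument is essentially a substitution; the only point needing care is the bookkeeping of sign conventions for the outward normal $\nu$, the characteristic direction $\widetilde T$, and the relation $\widetilde T=\widehat J\xi$, together with confirming $\widetilde\tau\equiv0$ on $S^{2n+1}$ so that Lemma \ref{88}(ii) applies directly. I do not expect any genuine obstacle beyond this.
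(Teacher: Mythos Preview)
Your proposal is correct and follows essentially the same route as the paper: compute $Q^{S^{2n+1}}$ for the standard embedding $i:S^{2n+1}\hookrightarrow\mathbb{C}^{n+1}$ and apply Theorem~\ref{107}. The paper records the formula as $Q^{S^{2n+1}}_y=2\pi_{\widetilde H}-(2n+2)\,\mathrm{Id}$ (which equals your $-2n\,\mathrm{Id}$ on $\widetilde H_y$), and then, rather than invoking Theorem~\ref{107} verbatim, it appeals to Remark~\ref{103} together with Lemma~\ref{104} (nonconstant horizontal $\Rightarrow$ horizontally nonconstant) to see that $\sum_\lambda\langle Q^N df(e_\lambda),df(e_\lambda)\rangle<0$; this is the same argument with the strict negativity made explicit.
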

Before we proof the corollary, we need the following lemma.
\begin{lem}\quad\label{104}
Suppose $f:(M,H(M),J,\theta)\rightarrow (N,\widetilde{H}(N),\widetilde{J},\widetilde{\theta})$ is a horizontal map. If $f$ is nonconstant, then it is horizontally nonconstant.
\end{lem}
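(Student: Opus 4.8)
The plan is to prove the contrapositive: if $f$ is horizontally constant, meaning $df(X)=0$ for every $X\in\Gamma(H(M))$ (equivalently $df_{H,\widetilde H}\equiv0$, since $f$ is horizontal), then $f$ must already be constant, contradicting the hypothesis. The whole point is to recover control of $df(\widetilde{\phantom{T}})$ on the characteristic direction $T$ from the horizontal data, using that $H(M)$ is a contact distribution on a strictly pseudoconvex CR manifold.

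First I would feed $X,Y\in\Gamma(H(M))$ into Lemma~\ref{82}. Because $df(X)$ and $df(Y)$ vanish identically as sections of $f^{-1}TN$, the left-hand side $\widetilde{\nabla}_Xdf(Y)-\widetilde{\nabla}_Ydf(X)$ is zero, and the terms $\widetilde{\theta}(df(X))\widetilde{\tau}(df(Y))$, $\widetilde{\theta}(df(Y))\widetilde{\tau}(df(X))$ and $d\widetilde{\theta}(df(X),df(Y))\widetilde T$ also vanish, being linear in $df(X)$ and $df(Y)$. Hence Lemma~\ref{82} collapses to
\begin{align}
df([X,Y])=0\qquad\text{for all }X,Y\in\Gamma(H(M)).\nonumber
\end{align}

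The key step is to exploit strict pseudoconvexity. Fix $x\in M$ and a nonzero $v\in H(M)_x$; extend $v$ to a local section $X\in\Gamma(H(M))$ and set $Y=J_bX\in\Gamma(H(M))$. Since $\theta$ vanishes on $H(M)$ we have $d\theta(X,Y)=-\theta([X,Y])$, while $d\theta(X,J_bX)=2G_\theta(X,X)$ by the definition of $G_\theta$; as $L_\theta$, hence $G_\theta$ on $H(M)$, is positive definite, $\theta([X,Y])(x)=-2G_\theta(v,v)\neq0$. Decomposing $[X,Y]=\pi_H[X,Y]+\theta([X,Y])\,T$ and applying $df$, the horizontal piece $df(\pi_H[X,Y])$ is killed by horizontal constancy while $df([X,Y])=0$ from the previous step, so $\theta([X,Y])(x)\,df(T_x)=0$ and therefore $df(T_x)=0$. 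Since $x$ was arbitrary, $df(T)\equiv0$; together with $df|_{H(M)}\equiv0$ this forces $df\equiv0$ on all of $TM$, so $f$ is locally constant, hence constant on the connected closed manifold $M$, the desired contradiction.

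I expect the only delicate point to be this contact-structure argument: one must make sure $X$ and $Y$ are honest local vector fields so that $[X,Y]$ is defined, and that the sign bookkeeping in $G_\theta(X,Y)=\frac{1}{2}d\theta(X,J_bY)$ and in $d\theta(X,Y)=X\theta(Y)-Y\theta(X)-\theta([X,Y])$ is carried through consistently, so that $\theta([X,Y])(x)$ is genuinely nonzero. Everything else is a formal consequence of Lemma~\ref{82}.
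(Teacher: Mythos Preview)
Your proof is correct and follows essentially the same route as the paper: both feed horizontal $X,Y$ into Lemma~\ref{82} and use nondegeneracy of $d\theta$ on $H(M)$ to force $df(T)=0$. The only cosmetic difference is that the paper phrases the computation via the antisymmetrized second fundamental form $(\widetilde{\nabla}_Xdf)(Y)-(\widetilde{\nabla}_Ydf)(X)=-d\theta(X,Y)\,df(T)$ (using that $H(M)$ is $\nabla$-parallel so $df(\nabla_XY)=0$), whereas you obtain $df([X,Y])=0$ first and then decompose $[X,Y]$; these are equivalent since $T_{\nabla}(X,Y)=d\theta(X,Y)T$ on $H(M)\times H(M)$.
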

\begin{proof}\quad we only have to show that if $df\mid_{H(M)}=0$ then $df\mid_{TM}=0$. For any $X,Y \in \Gamma(H(M))$, since $df\mid_{H(M)}=0$, by (\ref{83}) we have
\begin{align}
(\widetilde{\nabla}_{X}df)(Y)-(\widetilde{\nabla}_{Y}df)(X)=-d \theta(X,Y)df(T). \nonumber
\end{align}
As the levi distribution $H(M)$ is parallel with respect to $\nabla$, we get $df(T)=0$.
\end{proof}
\indent Now we are ready to proof the Corollary \ref{98}.
Let $N=S^{2n+1}$. By Example \ref{91} we know that the standard odd-dimensional sphere $i:S^{2n+1}\hookrightarrow C^{n+1}$ is a isometric embedded CR manifold. Therefore we have $\widetilde{T}=\widehat{J}\nu$. Here $\nu$ be the exterior unit normal to $S^{2n+1}$. It is known that $A_{\nu}X=X$ for all
$X\in \Gamma(TS^{2n+1})$. Then $Q_y^{S^{2n+1}}=2\pi_{\widetilde{H}}-(2n+2)Id $.
Since $f$ is nonconstant, by Lemma \ref{104} we can get $\sum_{\lambda=1}^{2m}|df(e_{\lambda})|^2 > 0$. It follows that $\sum_{\lambda=1}^{2m} \langle Q^N(df(e_{\lambda})),df(e_{\lambda}) \rangle  < 0$. By Remark \ref{103}, we will obtain the result.
\\
\\
\indent Next, we will consider the simplest map, i.e. the identity map $I:S^{2n+1}\rightarrow S^{2n+1}$. Obviously $I$ is a horizontal pseudoharmonic map. By Corallary \ref{98}, we know that $I$ is unstable. Following the method in [7], we want to investigate the unstable degree of $I$ as a pseudoharmonic map.\\
\indent Let $V \in \Gamma(TS^{2n+1})$. According to the decomposition
$TS^{2n+1}=span\{\widetilde{T}\}\oplus \widetilde{H}(S^{2n+1})$, we may write $V$ as
\begin{align}
V=V_{\widetilde T}+V_{\widetilde{H}}, \label{43}
\end{align}
where $V_{\widetilde{T}}= \langle V,\widetilde{T} \rangle  \widetilde{T}$ and $V_{\widetilde H}=V- \langle V,\widetilde{T} \rangle  \widetilde{T}$.
Obviously $V_{\widetilde H}$ is a section of $\widetilde{H} (S^{2n+1})$. It is also a section of $I^{-1}(\widetilde H (S^{2n+1}))$.\\
\indent Let $L_{V}g_{\widetilde{\theta}}$ be the Lie derivative of the webster metric $g_{\widetilde{\theta}}$ in the direction of $V$. It is the symmetric 2-tensor on $S^{2n+1}$.
Firstly, we give the following lemma:
\begin{lem}\quad \label{46}
For any $\ V \in \Gamma(TS^{2n+1})$.
\begin{align}
\frac{1}{2} \int_{S^{2n+1}}|L_V g_{\widetilde{\theta}}|^2_{\widetilde{H}}dV_{\widetilde{\theta}}=
\int_{S^{2n+1}} \{\sum_{\lambda=1}^{2n} &[  \langle  \widetilde{\nabla}_{e_{\lambda}}V_{\widetilde{H}},\widetilde{\nabla}_{e_{\lambda}}V_{\widetilde{H}} \rangle
- \langle \widetilde{R}(V,e_{\lambda},V,e_{\lambda} \rangle ] \nonumber \\
&+( div V_{\widetilde H})^2-2 \langle \widetilde{\nabla}_{\widetilde T}V_{\widetilde H},\widetilde J V_{\widetilde H} \rangle \}d V_{\widetilde{\theta}}. \nonumber
\end{align}
Here
$|L_{V} g_{\widetilde{\theta}}|_{\widetilde H}^2=\sum_{\lambda,\mu=1}^{2n}[(L_{V}g_{\widetilde{\theta}})(e_{\lambda},e_{\mu})]^2$ and $\{e_{\lambda}\}_{\lambda=1}^{2n}$ is a local orthonormal frame of $\widetilde{H}(S^{2n+1})$.
\end{lem}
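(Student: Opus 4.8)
The plan is to reduce Lemma \ref{46} to a pointwise identity valid modulo a total divergence and then integrate over the closed manifold $S^{2n+1}$. Fix a point $y$ and choose a local orthonormal frame $\{e_{\lambda}\}_{\lambda=1}^{2n}$ of $\widetilde{H}(S^{2n+1})$ that is $\widetilde{\nabla}$-normal at $y$ (so $\widetilde{\nabla}e_{\lambda}=0$ there). Since $\widetilde{\nabla}^{\theta}g_{\widetilde{\theta}}=0$ and $\widetilde{\nabla}^{\theta}$ is torsion free, for horizontal $X,Y$ one has $(L_{V}g_{\widetilde{\theta}})(X,Y)=\langle\widetilde{\nabla}^{\theta}_{X}V,Y\rangle+\langle\widetilde{\nabla}^{\theta}_{Y}V,X\rangle$, whence
\[
\tfrac12\,|L_{V}g_{\widetilde{\theta}}|^{2}_{\widetilde{H}}=\sum_{\lambda,\mu=1}^{2n}\langle\widetilde{\nabla}^{\theta}_{e_{\lambda}}V,e_{\mu}\rangle^{2}+\sum_{\lambda,\mu=1}^{2n}\langle\widetilde{\nabla}^{\theta}_{e_{\lambda}}V,e_{\mu}\rangle\,\langle\widetilde{\nabla}^{\theta}_{e_{\mu}}V,e_{\lambda}\rangle .
\]
Everything then comes down to analyzing these two sums.

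Because $S^{2n+1}$ is Sasakian (Example \ref{91}), $\widetilde{\tau}=0$ and $\widetilde{A}=0$, so (\ref{77}) of Lemma \ref{81} collapses, for horizontal $e_{\lambda}$, to $\widetilde{\nabla}^{\theta}_{e_{\lambda}}Y=\widetilde{\nabla}_{e_{\lambda}}Y-\tfrac12 d\widetilde{\theta}(e_{\lambda},Y)\widetilde{T}+\widetilde{\theta}(Y)\widetilde{J}e_{\lambda}$. Writing $V=V_{\widetilde{H}}+\langle V,\widetilde{T}\rangle\widetilde{T}$ and pairing against the horizontal $e_{\mu}$, the $\widetilde{T}$-component drops out and, using $\widetilde{\nabla}\widetilde{T}=0$, one gets $\langle\widetilde{\nabla}^{\theta}_{e_{\lambda}}V,e_{\mu}\rangle=\langle\widetilde{\nabla}_{e_{\lambda}}V_{\widetilde{H}},e_{\mu}\rangle+\langle V,\widetilde{T}\rangle\langle\widetilde{J}e_{\lambda},e_{\mu}\rangle$. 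Substituting this into the first sum produces $\sum_{\lambda}|\widetilde{\nabla}_{e_{\lambda}}V_{\widetilde{H}}|^{2}_{\widetilde{H}}$ plus purely algebraic terms in $\langle V,\widetilde{T}\rangle$ and cross terms of the form $\langle V,\widetilde{T}\rangle\langle\widetilde{J}e_{\lambda},e_{\mu}\rangle\langle\widetilde{\nabla}_{e_{\lambda}}V_{\widetilde{H}},e_{\mu}\rangle$, which are set aside for later cancellation.

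The core of the argument is the integration by parts on the second sum. After inserting the same expansion for $\langle\widetilde{\nabla}^{\theta}_{e_{\mu}}V,e_{\lambda}\rangle$, the leading piece $\int_{S^{2n+1}}\sum_{\lambda,\mu}\langle\widetilde{\nabla}_{e_{\lambda}}V_{\widetilde{H}},e_{\mu}\rangle\langle\widetilde{\nabla}_{e_{\mu}}V_{\widetilde{H}},e_{\lambda}\rangle\,dV_{\widetilde{\theta}}$ is treated exactly as in the classical Yano-type identity: since $\widetilde{\nabla}_{V_{\widetilde{H}}}V_{\widetilde{H}}$ and $(\mathrm{div}\,V_{\widetilde{H}})V_{\widetilde{H}}$ are $\widetilde{H}$-valued, the horizontal divergence formula (\ref{90}) lets one move one covariant derivative across without any $\widetilde{T}(\widetilde{\theta}(\cdot))$ contribution, producing $\int(\mathrm{div}\,V_{\widetilde{H}})^{2}$ and a term $-\int\sum_{\lambda,\mu}\langle V_{\widetilde{H}},\widetilde{\nabla}_{e_{\lambda}}\widetilde{\nabla}_{e_{\mu}}V_{\widetilde{H}}\rangle$. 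Commuting via $\widetilde{\nabla}_{e_{\lambda}}\widetilde{\nabla}_{e_{\mu}}-\widetilde{\nabla}_{e_{\mu}}\widetilde{\nabla}_{e_{\lambda}}=\widetilde{R}(e_{\lambda},e_{\mu})+\widetilde{\nabla}_{[e_{\lambda},e_{\mu}]}$ and tracing gives the curvature term $-\int\sum_{\lambda}\widetilde{R}(V,e_{\lambda},V,e_{\lambda})$, plus a residual term: by Lemma \ref{81} with $\widetilde{\tau}=0$, the torsion of $\widetilde{\nabla}$ forces $[e_{\lambda},e_{\mu}]=-d\widetilde{\theta}(e_{\lambda},e_{\mu})\widetilde{T}=-2\langle\widetilde{J}e_{\lambda},e_{\mu}\rangle\widetilde{T}$ at $y$, so $\widetilde{\nabla}_{[e_{\lambda},e_{\mu}]}=-2\langle\widetilde{J}e_{\lambda},e_{\mu}\rangle\widetilde{\nabla}_{\widetilde{T}}$. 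Summing against $e_{\mu}$ (using $\sum_{\mu}\langle\widetilde{J}e_{\lambda},e_{\mu}\rangle e_{\mu}=\widetilde{J}e_{\lambda}$) and then against $V_{\widetilde{H}}$, this residual term — combined with the algebraic pieces involving $\langle\widetilde{J}e_{\lambda},e_{\mu}\rangle$ collected in the previous step — reassembles to precisely $-2\int\langle\widetilde{\nabla}_{\widetilde{T}}V_{\widetilde{H}},\widetilde{J}V_{\widetilde{H}}\rangle$, while all the remaining terms algebraic in $\langle V,\widetilde{T}\rangle$ cancel.

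I expect the main obstacle to be exactly this bookkeeping of the non-horizontal contributions: the $2\widetilde{\theta}\odot\widetilde{J}$ summand of (\ref{77}), the $d\widetilde{\theta}\otimes\widetilde{T}$ part of the Tanaka-Webster torsion (\ref{72}), and the $\widetilde{T}$-component of the brackets $[e_{\lambda},e_{\mu}]$ each contribute expressions built out of $\widetilde{J}$ and $\langle V,\widetilde{T}\rangle$, and one must confirm that every term algebraic in $\langle V,\widetilde{T}\rangle$ cancels while only the genuine $\widetilde{\nabla}_{\widetilde{T}}V_{\widetilde{H}}$ contribution survives, and with coefficient exactly $-2$. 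Once the pointwise identity is assembled up to a $\mathrm{div}$-term, integrating over the closed manifold $S^{2n+1}$ annihilates the divergence and yields the stated formula.
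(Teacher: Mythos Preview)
Your proposal is correct and follows essentially the same Yano-type integration-by-parts argument as the paper: one rewrites the horizontal norm of $L_Vg_{\widetilde\theta}$ as a ``square'' sum plus a ``cross'' sum, then applies the divergence theorem to $\widetilde\nabla_{V_{\widetilde H}}V_{\widetilde H}$ and $(\mathrm{div}\,V_{\widetilde H})V_{\widetilde H}$ to turn the cross sum into the curvature term, $(\mathrm{div}\,V_{\widetilde H})^2$, and---via the torsion identity $[e_\lambda,e_\mu]=\widetilde\nabla_{e_\lambda}e_\mu-\widetilde\nabla_{e_\mu}e_\lambda-d\widetilde\theta(e_\lambda,e_\mu)\widetilde T$---the residual $-2\langle\widetilde\nabla_{\widetilde T}V_{\widetilde H},\widetilde JV_{\widetilde H}\rangle$.

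The one organizational difference is how the $\widetilde T$-component of $V$ is handled. You pass through the Levi--Civita connection, expand $\langle\widetilde\nabla^{\theta}_{e_\lambda}V,e_\mu\rangle=\langle\widetilde\nabla_{e_\lambda}V_{\widetilde H},e_\mu\rangle+\langle V,\widetilde T\rangle\langle\widetilde Je_\lambda,e_\mu\rangle$, and then verify that every term algebraic in $\langle V,\widetilde T\rangle$ cancels; this is the ``bookkeeping'' you correctly flag as the main obstacle. The paper sidesteps that entirely by first observing, from $\widetilde\nabla\widetilde T=0$ and $\widetilde\tau=0$, that $(L_{V_{\widetilde T}}g_{\widetilde\theta})(e_\lambda,e_\mu)=0$, so $(L_Vg_{\widetilde\theta})|_{\widetilde H}=(L_{V_{\widetilde H}}g_{\widetilde\theta})|_{\widetilde H}$ and one can work with $V_{\widetilde H}$ and the Tanaka--Webster connection from the outset. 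This is cleaner and removes exactly the cancellation you were worried about, but the underlying computation is the same.
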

\begin{proof}\quad By (\ref{43}), we have
\begin{align}
(L_Vg_{\widetilde{\theta}})(e_{\lambda},e_{\mu})=(L_{V_{\widetilde{H}}}g_{\widetilde{\theta}})(e_{\lambda},e_{\mu})+(L_{V_{\widetilde T}} g_{\widetilde {\theta}})(e_{\lambda},e_{\mu}), \nonumber
\end{align}
where $1\leqslant\lambda,\ \mu\leqslant2n$.
Since $S^{2n+1}$ is a Sasakian manifold and $\widetilde{\nabla} \widetilde{T}=0$, we may perform the following calculations:
\begin{align}
(L_{V_{\widetilde{T}}}g_{\widetilde{\theta}})(e_{\lambda},e_{\mu})&=
- \langle [V_{\widetilde{T}},e_{\lambda}],e_{\mu} \rangle - \langle e_{\lambda},[V_{\widetilde{T}},e_{\mu}] \rangle  \nonumber \\
&=-\widetilde{\theta}(V)[ \langle \widetilde{\nabla}_{\widetilde{T}}e_{\lambda},e_{\mu} \rangle
+ \langle e_{\lambda},\widetilde{\nabla}_{\widetilde{T}}e_{\mu} \rangle ] \nonumber \\
&=0 ; \nonumber
\end{align}
hence
\begin{align}
(L_{V}g_{\widetilde{\theta}})(e_{\lambda},e_{\mu})=(L_{V_{\widetilde{H}}}g_{\widetilde{\theta}})(e_{\lambda},e_{\mu}). \label{48}
\end{align}
Since $S^{2n+1}$ is Sasakian, we deduce that
\begin{align}
\frac{1}{2}|L_{V_{\widetilde{H}}}g_{\widetilde{\theta}}|^2&=\frac{1}{2}\sum_{\lambda,\mu=1}^{2n}[(L_{V_{\widetilde H}}g_{\widetilde{\theta}})(e_{\lambda},e_{\mu})]^2 \nonumber \\
=\sum_{\lambda=1}^{2n}& \langle \widetilde{\nabla}_{e_{\lambda}}V_{\widetilde{H}},\widetilde{\nabla}_{e_{\lambda}}V_{\widetilde{H}} \rangle +
\sum_{\lambda,\mu=1}^{2n} \langle \widetilde{\nabla}_{e_{\lambda}}V_{\widetilde H},e_{\mu} \rangle  \langle e_{\lambda},\widetilde{\nabla}_{e_{\mu}}V_{\widetilde H} \rangle  \label{45}
\end{align}
\indent Let $X,Y \in \Gamma(\widetilde H(S^{2n+1}))$ be (locally) defined by $X=\widetilde{\nabla}_{V_{\widetilde H}}V_{\widetilde H}$
and $Y=\sum_{i=1}^{2n} \langle \widetilde{\nabla}_{e_{\lambda}}V_{\widetilde H},e_{\lambda} \rangle V_{\widetilde H}$.
Let us compute the divergence of $X$ and $Y$ respectively. A calculation shows that
\begin{align}
divX-divY&=\sum_{\lambda=1}^{2n}\widetilde R(V_{\widetilde H},e_{\lambda},V_{\widetilde H},e_{\lambda})+\sum_{\lambda,\mu=1}^{2n}[ \langle \widetilde{\nabla}_{[e_{\lambda},e_{\mu}]}V_{\widetilde H},e_{\lambda} \rangle  \langle V_{\widetilde H},e_{\mu} \rangle  \nonumber \\
&+ \langle V_{\widetilde H},\widetilde{\nabla}_{e_{\lambda}}e_{\mu} \rangle  \langle \widetilde{\nabla}_{e_{\mu}}V_{\widetilde H},e_{\lambda} \rangle - \langle \widetilde{\nabla}_{e_{\lambda}}V_{\widetilde H},\widetilde{\nabla}_{e_{\mu}}e_{\lambda} \rangle  \langle V_{\widetilde{H}},e_{\mu} \rangle  \nonumber \\
&+\sum_{\lambda,\mu=1}^{2n} \langle \widetilde{\nabla}_{e_{\lambda}}V_{\widetilde H},e_{\mu} \rangle  \langle e_{\lambda},\widetilde{\nabla}_{e_{\mu}}V_{\widetilde H} \rangle] -(divV_{\widetilde H})^2 \nonumber
\end{align}
Taking into account (\ref{72}) and $\widetilde{\tau}=0$ we may actually express $[e_{\lambda},e_{\mu}]$ as
\begin{align}
[e_{\lambda},e_{\mu}]=\widetilde{\nabla}_{e_{\lambda}}e_{\mu}-\widetilde{\nabla}_{e_{\mu}}e_{\lambda}
-d\widetilde{\theta}(e_{\lambda},e_{\mu})\widetilde T. \nonumber
\end{align}
Then
\begin{align}
divX-divY&=\sum_{\lambda=1}^{2n}\widetilde R(V_{\widetilde H},e_{\lambda},V_{\widetilde H},e_{\lambda})+\sum_{\lambda,\mu=1}^{2n} \langle \widetilde{\nabla}_{e_{\lambda}}V_{\widetilde H},e_{\mu} \rangle  \langle \widetilde{\nabla}_{e_{\mu}}V_{\widetilde H},e_{\lambda} \rangle  \nonumber \\
&+\sum_{\lambda,\mu=1}^{2n}[ \langle \widetilde{\nabla}_{\widetilde {\nabla}_{e_{\lambda}}e_{\mu}}V_{\widetilde H},e_{\lambda} \rangle  \langle V_{\widetilde H},e_{\mu} \rangle - \langle \widetilde{\nabla}_{\widetilde{\nabla}_{e_{\mu}}e_{\lambda}}V_{\widetilde H},e_{\lambda} \rangle  \langle V_{\widetilde H},e_{\mu} \rangle ] \nonumber \\
&+\sum_{\lambda,\mu=1}^{2n}[ \langle V_{\widetilde{H}},\widetilde{\nabla}_{e_{\lambda}}e_{\mu} \rangle  \langle \widetilde{\nabla}_{e_{\mu}}V_{\widetilde H},e_{\lambda} \rangle - \langle \widetilde{\nabla}_{e_{\lambda}}V_{\widetilde H},\widetilde{\nabla}_{e_{\mu}}e_{\lambda} \rangle  \langle V_{\widetilde{H}},e_{\mu} \rangle ] \nonumber \\
&-\sum_{\lambda,\mu=1}^{2n}d\widetilde{\theta}(e_{\lambda},e_{\mu}) \langle \widetilde{\nabla}_{\widetilde T}V_{\widetilde H},e_{\lambda} \rangle  \langle V_{\widetilde H},e_{\mu} \rangle -(divV_{\widetilde H})^2  \label{44}
\end{align}
On the other hand, we have
\begin{align}
\sum_{\lambda,\mu=1}^{2n} \langle \widetilde{\nabla}_{\widetilde {\nabla}_{e_{\lambda}}e_{\mu}}V_{\widetilde H},e_{\lambda} \rangle  \langle V_{\widetilde H},e_{\mu} \rangle
&=\sum_{\lambda,\mu,\nu=1}^{2n} \langle \widetilde{\nabla}_{e_{\lambda}}e_{\mu},e_{\nu} \rangle  \langle \widetilde{\nabla}_{e_{\nu}}V_{\widetilde H},e_{\lambda} \rangle  \langle V_{\widetilde H},e_{\mu} \rangle  \nonumber \\
&=-\sum_{\lambda,\mu,\nu=1}^{2n} \langle e_{\mu},\widetilde{\nabla}_{e_{\lambda}}e_{\nu} \rangle  \langle \widetilde{\nabla}_{e_{\nu}}V_{\widetilde H},e_{\lambda} \rangle  \langle V_{\widetilde H},e_{\mu} \rangle  \nonumber \\
&=-\sum_{\lambda,\mu=1}^{2n} \langle \widetilde{\nabla}_{e_{\mu}}V_{\widetilde H},e_{\lambda} \rangle  \langle V_{\widetilde H},\widetilde{\nabla}_{e_{\lambda}}e_{\mu} \rangle . \nonumber
\end{align}
Similarly, by a easy calculation we get
\begin{align}
\sum_{\lambda,\mu=1}^{2n} \langle \widetilde{\nabla}_{\widetilde{\nabla}_{e_{\mu}}e_{\lambda}}V_{\widetilde H},e_{\lambda} \rangle  \langle V_{\widetilde H},e_{\mu} \rangle=-\sum_{\lambda,\mu=1}^{2n}\langle \widetilde{\nabla}_{e_{\lambda}}V_{\widetilde{H}},\widetilde{\nabla}_{e_{\mu}}e_{\lambda}     \rangle \langle    V_{\widetilde{H}},e_{\mu}   \rangle . \nonumber
\end{align}
Therefore (\ref{44}) becomes (by $G_{\widetilde{\theta}}(\cdot,\cdot)=\frac{1}{2}d\widetilde{\theta}(\cdot,\widetilde{J}\cdot)$)
\begin{align}
divX-divY&=\sum_{\lambda=1}^{2n}\widetilde R(V_{\widetilde H},e_{\lambda},V_{\widetilde H},e_{\lambda})+\sum_{\lambda,\mu=1}^{2n} \langle \widetilde{\nabla}_{e_{\lambda}}V_{\widetilde H},e_{\mu} \rangle  \langle \widetilde{\nabla}_{e_{\mu}}V_{\widetilde H},e_{\lambda} \rangle  \nonumber \\
&-(divV_{\widetilde H})^2+2 \langle \widetilde{\nabla}_{\widetilde T}V_{\widetilde H},e_{\lambda} \rangle  \langle \widetilde{J}V_{\widetilde H},e_{\lambda} \rangle . \nonumber
\end{align}
Then
\begin{align}
& \sum_{\lambda,\mu=1}^{2n} \langle \widetilde{\nabla}_{e_{\lambda}}V_{\widetilde H},e_{\mu} \rangle  \langle \widetilde{\nabla}_{e_{\mu}}V_{\widetilde H},e_{\lambda} \rangle \nonumber \\
&=divX-divY-\sum_{\lambda=1}^{2n}\widetilde R(V_{\widetilde H},e_{\lambda},V_{\widetilde H},e_{\lambda})
+(divV_{\widetilde H})^2 \nonumber \\
&-2 \langle \widetilde{\nabla}_{\widetilde T}V_{\widetilde H},e_{\lambda} \rangle  \langle \widetilde{J}V_{\widetilde H},e_{\lambda} \rangle . \label{100}
\end{align}
Since $S^{2n+1}$ is Sasakian, we have(cf.[2])
\begin{align}
 \langle \widetilde{R}(\widetilde{T},Y)Z,W \rangle = \langle \widetilde{S}(Z,W),Y \rangle =0. \nonumber
\end{align}
Thus
\begin{align}
\widetilde{R}(V_{\widetilde H},e_{\lambda},V_{\widetilde H},e_{\lambda})=\widetilde{R}(V,e_{\lambda},V,e_{\lambda}). \nonumber
\end{align}
Then (\ref{100}) becomes
\begin{align}
& \sum_{\lambda,\mu=1}^{2n} \langle \widetilde{\nabla}_{e_{\lambda}}V_{\widetilde H},e_{\mu} \rangle  \langle \widetilde{\nabla}_{e_{\mu}}V_{\widetilde H},e_{\lambda} \rangle \nonumber \\
&=divX-divY-\sum_{\lambda=1}^{2n}\widetilde R(V,e_{\lambda},V,e_{\lambda})
+(divV_{\widetilde H})^2 \nonumber \\
&-2 \langle \widetilde{\nabla}_{\widetilde T}V_{\widetilde H},e_{\lambda} \rangle  \langle \widetilde{J}V_{\widetilde H},e_{\lambda} \rangle . \nonumber
\end{align}
We may substitute into (\ref{45}) to obtain:
\begin{align}
\frac{1}{2}|L_{V_{\widetilde H}}g_{\widetilde{\theta}}|_{\widetilde H}^2
&=\sum_{\lambda=1}^{2n}[ \langle \widetilde{\nabla}_{e_{\lambda}}V_{\widetilde H},\widetilde{\nabla}_{e_{\lambda}}V_{\widetilde H} \rangle -\widetilde R(e_{\lambda},V,e_{\lambda},V) \nonumber \\
&-2 \langle \widetilde{\nabla}_{\widetilde T}V_{\widetilde H},e_{\lambda} \rangle  \langle \widetilde JV_{\widetilde H},e_{\lambda} \rangle ]
+(divV_{\widetilde H})^2+divX-divY; \nonumber
\end{align}
hence
\begin{align}
\frac{1}{2} \int_{S^{2n+1}}|L_V g_{\widetilde{\theta}}|^2_{\widetilde{H}}dV_{\widetilde{\theta}}=
\int_{S^{2n+1}} \{\sum_{\lambda=1}^{2n}& [  \langle  \widetilde{\nabla}_{e_{\lambda}}V_{\widetilde{H}},\widetilde{\nabla}_{e_{\lambda}}V_{\widetilde{H}} \rangle
- \langle \widetilde{R}(V,e_{\lambda},V,e_{\lambda} \rangle ] \nonumber \\
+( div V_{\widetilde H})^2&-2 \langle \widetilde{\nabla}_{\widetilde T}V_{\widetilde H},\widetilde J V_{\widetilde H} \rangle \}d V_{\widetilde{\theta}}. \nonumber
\end{align}
\end{proof}
\indent By Remark \ref{93} and Lemma \ref{46}, we have the following result.
\begin{prp}\quad
For any $V \in \Gamma(TS^{2n+1})$, we have
\begin{align}
H_I(V,V)=\int_{S^{2n+1}}[\frac{1}{2}|L_Vg_{\widetilde{\theta}}|_{\widetilde H}^2
-(divV_{\widetilde H})^2+2 \langle \widetilde{\nabla}_{\widetilde T}V_{\widetilde H},\widetilde JV_{\widetilde H} \rangle ]dV_{\widetilde{\theta}}, \label{49}
\end{align}
where $V_{\widetilde H}$ denotes the horizontal part of $V$.
\end{prp}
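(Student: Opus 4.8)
The plan is to specialize the Sasakian second variation formula of Remark~\ref{93} to the identity map and then absorb the resulting integrand into $|L_Vg_{\widetilde\theta}|_{\widetilde H}^2$ by means of Lemma~\ref{46}; the argument is essentially a bookkeeping one.

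First I would apply (\ref{200}) to $f=I:S^{2n+1}\to S^{2n+1}$, which is legitimate because $S^{2n+1}$ is Sasakian (Example~\ref{91}). Here $S^{2n+1}$ has real dimension $2n+1$, so the horizontal frame $\{e_\lambda\}$ runs over $\lambda=1,\dots,2n$, and since $dI$ is the identity of $TS^{2n+1}$ with each $e_\lambda$ horizontal we have $dI_{H,\widetilde H}(e_\lambda)=\pi_{\widetilde H}(e_\lambda)=e_\lambda$. Thus (\ref{200}) becomes
\[
H_I(V,V)=\int_{S^{2n+1}}\sum_{\lambda=1}^{2n}\bigl[\langle\widetilde{\nabla}_{e_\lambda}V_{\widetilde H},\widetilde{\nabla}_{e_\lambda}V_{\widetilde H}\rangle-\widetilde R(V,e_\lambda,V,e_\lambda)\bigr]\,dV_{\widetilde\theta},
\]
where for a general $V\in\Gamma(TS^{2n+1})$ the symbol $H_I(V,V)$ denotes $\frac{d^2}{dt^2}\big|_{t=0}E_{H,\widetilde H}(f_t)$ along the variation determined by $V$, and $V_{\widetilde H}$ is its horizontal part.

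Next I would invoke Lemma~\ref{46}: its right-hand side is exactly the integral just displayed plus $\int_{S^{2n+1}}\bigl[(divV_{\widetilde H})^2-2\langle\widetilde{\nabla}_{\widetilde T}V_{\widetilde H},\widetilde JV_{\widetilde H}\rangle\bigr]dV_{\widetilde\theta}$, while its left-hand side is $\tfrac12\int_{S^{2n+1}}|L_Vg_{\widetilde\theta}|_{\widetilde H}^2\,dV_{\widetilde\theta}$. Solving for the first integral and inserting the result into the expression for $H_I(V,V)$ yields precisely (\ref{49}).

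The only point demanding care — and the closest thing to an obstacle — is to check that the curvature contributions of the two inputs genuinely agree: the term $\widetilde R(V,dI_{H,\widetilde H}(e_\lambda),V,dI_{H,\widetilde H}(e_\lambda))$ of (\ref{200}) equals $\widetilde R(V,e_\lambda,V,e_\lambda)$ by $dI_{H,\widetilde H}(e_\lambda)=e_\lambda$, and this is the same quantity occurring in Lemma~\ref{46}, where, as in that lemma's proof, replacing $V$ by $V_{\widetilde H}$ inside the curvature is permitted because the Sasakian identity $\langle\widetilde R(\widetilde T,Y)Z,W\rangle=0$ kills the $V_{\widetilde T}$ part. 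Once these line up term by term, the proposition follows immediately.
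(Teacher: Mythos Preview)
Your proposal is correct and follows precisely the paper's own argument: the paper simply writes ``By Remark~\ref{93} and Lemma~\ref{46}, we have the following result,'' and you have spelled out exactly that two-step substitution, including the specialization $dI_{H,\widetilde H}(e_\lambda)=e_\lambda$ and the matching of curvature terms. The point you flag about replacing $V$ by $V_{\widetilde H}$ inside the curvature via the Sasakian identity $\langle\widetilde R(\widetilde T,Y)Z,W\rangle=0$ is already handled in the proof of Lemma~\ref{46}, so no extra work is needed.
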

Let $\underline{i}$ denote the algebra of infinitesimal isometries, i.e.vector fields $V$ satisfying $L_Vg_{\widetilde{\theta}}=0$ and let $\underline{c}$ denote the algebra of conformal fields(here we follow the notations of [3]). We have the following conclusion:
\begin{prp}\quad
A vector field $V$ on $S^{2n+1}$ is conformal iff $L_Vg_{\widetilde{\theta}}=\frac{divV_{\widetilde H}}{n}g_{\widetilde{\theta}}$.   \label{51}
\end{prp}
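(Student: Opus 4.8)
The plan is to establish both implications at once by showing that, on $S^{2n+1}$, the only function $\phi$ for which $L_V g_{\widetilde{\theta}} = \phi\, g_{\widetilde{\theta}}$ can hold is $\phi = \frac{1}{n}\, div\, V_{\widetilde{H}}$. Granting this, the ``if'' part is immediate, since $\frac{1}{n}\, div\, V_{\widetilde{H}}$ is a smooth function on $S^{2n+1}$, so the stated identity displays $V$ as a conformal field. For the ``only if'' part I would take a vector field $V$ with $L_V g_{\widetilde{\theta}} = \phi\, g_{\widetilde{\theta}}$, fix a local orthonormal frame $\{e_{\lambda}\}_{\lambda=1}^{2n}$ of $\widetilde{H}(S^{2n+1})$, evaluate this tensor identity on each pair $(e_{\lambda}, e_{\lambda})$, and sum over $1 \le \lambda \le 2n$; this gives $\sum_{\lambda=1}^{2n} (L_V g_{\widetilde{\theta}})(e_{\lambda}, e_{\lambda}) = 2n\,\phi$, so everything reduces to the trace identity $\sum_{\lambda=1}^{2n} (L_V g_{\widetilde{\theta}})(e_{\lambda}, e_{\lambda}) = 2\, div\, V_{\widetilde{H}}$.

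To prove that trace identity I would proceed exactly as in the proof of Lemma \ref{46}. First, decompose $V = V_{\widetilde{H}} + V_{\widetilde{T}}$ as in (\ref{43}); since $S^{2n+1}$ is Sasakian (Example \ref{91}) and $\widetilde{\nabla}\widetilde{T} = 0$, one has $(L_{V_{\widetilde{T}}} g_{\widetilde{\theta}})(e_{\lambda}, e_{\mu}) = 0$ for horizontal arguments --- this is the computation yielding (\ref{48}) --- so the sum is unchanged if $V$ is replaced by $V_{\widetilde{H}}$. Next, rewrite the Lie derivative of the Webster metric through the Tanaka--Webster connection: because $\widetilde{\tau} = 0$ and, by (\ref{77}), the difference $\widetilde{\nabla}^{\theta} - \widetilde{\nabla}$ consists only of terms proportional to $\widetilde{T}$ or carrying a factor $\widetilde{\theta}(\cdot)$ that annihilates horizontal vectors, one gets $(L_{V_{\widetilde{H}}} g_{\widetilde{\theta}})(e_{\lambda}, e_{\mu}) = \langle \widetilde{\nabla}_{e_{\lambda}} V_{\widetilde{H}}, e_{\mu} \rangle + \langle \widetilde{\nabla}_{e_{\mu}} V_{\widetilde{H}}, e_{\lambda} \rangle$ for horizontal indices --- the formula already used implicitly in deriving (\ref{45}). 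Setting $\mu = \lambda$, summing, and applying (\ref{90}) (legitimate since $V_{\widetilde{H}} \in \Gamma(\widetilde{H}(S^{2n+1}))$) gives $\sum_{\lambda=1}^{2n} (L_{V_{\widetilde{H}}} g_{\widetilde{\theta}})(e_{\lambda}, e_{\lambda}) = 2 \sum_{\lambda=1}^{2n} \langle \widetilde{\nabla}_{e_{\lambda}} V_{\widetilde{H}}, e_{\lambda} \rangle = 2\, div\, V_{\widetilde{H}}$, which is the desired identity. Combining with $2n\,\phi = \sum_{\lambda=1}^{2n}(L_V g_{\widetilde{\theta}})(e_{\lambda}, e_{\lambda})$ yields $\phi = \frac{1}{n}\, div\, V_{\widetilde{H}}$, hence $L_V g_{\widetilde{\theta}} = \frac{div\, V_{\widetilde{H}}}{n}\, g_{\widetilde{\theta}}$.

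I do not anticipate a real obstacle here: the argument is just the contraction of the conformality relation against a horizontal frame together with the horizontal divergence formula (\ref{90}). The one delicate point is the passage from the Levi--Civita Lie derivative to the Tanaka--Webster connection, where every torsion-type correction term in (\ref{77}) must be checked to drop out of the purely horizontal trace; but this is precisely the bookkeeping already carried out in the proof of Lemma \ref{46}, so in the write-up it can be invoked rather than repeated.
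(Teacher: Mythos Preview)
Your proposal is correct and follows essentially the same route as the paper: start from $L_Vg_{\widetilde{\theta}}=\sigma g_{\widetilde{\theta}}$, use (\ref{48}) to replace $V$ by $V_{\widetilde H}$ on horizontal arguments, rewrite the Lie derivative via the Tanaka--Webster connection, and contract against the horizontal frame using (\ref{90}) to obtain $\sigma=\frac{divV_{\widetilde H}}{n}$. Your write-up is in fact slightly more careful than the paper's in making the ``if'' direction explicit and in justifying why the torsion corrections from (\ref{77}) drop out.
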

\begin{proof}\quad A vector field is conformal iff $L_Vg_{\widetilde{\theta}}=\sigma g_{\widetilde{\theta}}$, where $\sigma$ is a function. In an orthonormal frame of $\widetilde H(S^{2n+1})$, we see that
\begin{align}
(L_{V}g_{\widetilde{\theta}})(e_{\lambda},e_{\mu})=\sigma g_{\widetilde{\theta}}(e_{\lambda},e_{\mu})=\sigma \delta_{\lambda \mu}. \nonumber
\end{align}
By (\ref{48}), we may conclude that
\begin{align}
(L_{V}g_{\widetilde{\theta}})(e_{\lambda},e_{\mu})=(L_{V_{\widetilde{H}}}g_{\widetilde{\theta}})(e_{\lambda},e_{\mu})
= \langle \widetilde{\nabla}_{e_{\lambda}}V_{\widetilde H},e_{\mu} \rangle + \langle e_{\lambda},\widetilde{\nabla}_{e_{\mu}}V_{\widetilde{H}} \rangle . \nonumber
\end{align}
Therefore
\begin{align}
 \langle \widetilde{\nabla}_{e_{\lambda}}V_{\widetilde H},e_{\mu} \rangle + \langle e_{\lambda},\widetilde{\nabla}_{e_{\mu}}V_{\widetilde{H}} \rangle =\sigma \delta_{\lambda \mu}. \nonumber
\end{align}
Then contract this identity to obtain
\begin{align}
\sigma=\frac{divV_{\widetilde H}}{n}.
\end{align}
\end{proof}
We are now ready to prove the following result:
\begin{prp}\quad
If $n\geq1$, then index($I$)$\geq$ 2n+2. Here the index of $I$ is the dimension of the largest subspace of $\Gamma(TS^{2n+1})$ on which $H_{I}$ is negative.
\end{prp}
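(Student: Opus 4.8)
The plan is to follow Smith's method by producing an explicit $(2n+2)$-dimensional subspace of $\Gamma(TS^{2n+1})$ on which $H_I$ is negative definite, built from the ``gradient'' conformal fields of the sphere. Realize $S^{2n+1}\hookrightarrow\mathbb{C}^{n+1}\cong\mathbb{R}^{2n+2}$ as the isometric embedded CR manifold of Example~\ref{91}, which is moreover Sasakian, so that $\widetilde\tau=0$. For each constant vector $a\in\mathbb{R}^{2n+2}$ let $a_{\widetilde H}$ be the horizontal field of (\ref{85}) and Lemma~\ref{7}. First I would check that $a\mapsto a_{\widetilde H}$ is injective for $n\ge 1$: by (\ref{85}), $a_{\widetilde H}(x)=0$ forces $a\in\mathbb{R}\nu_x\oplus\mathbb{R}\widetilde T_x=\mathbb{C}x$, and since $n+1\ge 2$ two distinct complex lines in $\mathbb{C}^{n+1}$ meet only at $0$, so $a_{\widetilde H}\equiv 0$ implies $a=0$. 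Hence $W:=\{a_{\widetilde H}:a\in\mathbb{R}^{2n+2}\}$ has dimension $2n+2$, and it remains to show $H_I(a_{\widetilde H},a_{\widetilde H})<0$ whenever $a\neq 0$.

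To evaluate $H_I(a_{\widetilde H},a_{\widetilde H})$ I would use (\ref{49}) (with $V_{\widetilde H}=a_{\widetilde H}$). On $S^{2n+1}$ one has $\widetilde\tau=0$ and $A_\nu=\mathrm{Id}$, so $A_{a^\perp}=\langle a,\nu\rangle\,\mathrm{Id}$, and Lemma~\ref{7} reduces to $\widetilde\nabla_X a_{\widetilde H}=\langle a,\nu\rangle X-\langle a,\widetilde T\rangle\widetilde J X$ for $X\in\widetilde H(S^{2n+1})$. Symmetrizing and using that $\widetilde J$ is skew with respect to $g_{\widetilde\theta}$, the $\widetilde J$-terms cancel and $(L_{a_{\widetilde H}}g_{\widetilde\theta})(X,Y)=2\langle a,\nu\rangle\langle X,Y\rangle$ on $\widetilde H$; thus $a_{\widetilde H}$ is conformal with factor $2\langle a,\nu\rangle$, and Proposition~\ref{51} gives $\mathrm{div}\,a_{\widetilde H}=2n\langle a,\nu\rangle$. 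Consequently $\tfrac12|L_{a_{\widetilde H}}g_{\widetilde\theta}|_{\widetilde H}^2=4n\langle a,\nu\rangle^2$ and $(\mathrm{div}\,a_{\widetilde H})^2=4n^2\langle a,\nu\rangle^2$.

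The remaining term $2\langle\widetilde\nabla_{\widetilde T}a_{\widetilde H},\widetilde J a_{\widetilde H}\rangle$ is the crux, since Lemma~\ref{7} furnishes only the horizontal covariant derivatives; I would compute $\widetilde\nabla_{\widetilde T}a_{\widetilde H}$ extrinsically. Differentiating $a=a_{\widetilde H}+\langle a,\widetilde T\rangle\widetilde T+\langle a,\nu\rangle\nu$ along $\widetilde T$ using $\widehat\nabla a=0$, together with $\widehat\nabla_{\widetilde T}\widetilde T=h(\widetilde T,\widetilde T)=\nu$ (from (\ref{62}) and $A_\nu=\mathrm{Id}$) and $\widehat\nabla_{\widetilde T}\nu=-A_\nu\widetilde T=-\widetilde T$, the four correction terms cancel in pairs and one gets $\widehat\nabla_{\widetilde T}a_{\widetilde H}=0$; hence by (\ref{3}) $\widetilde\nabla^\theta_{\widetilde T}a_{\widetilde H}=0$, and then (\ref{77}) on a Sasakian manifold yields $\widetilde\nabla_{\widetilde T}a_{\widetilde H}=-\widetilde J a_{\widetilde H}$, so that $2\langle\widetilde\nabla_{\widetilde T}a_{\widetilde H},\widetilde J a_{\widetilde H}\rangle=-2|a_{\widetilde H}|^2$. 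Substituting into (\ref{49}),
\begin{align}
H_I(a_{\widetilde H},a_{\widetilde H})=\int_{S^{2n+1}}[\,4n(1-n)\langle a,\nu\rangle^2-2|a_{\widetilde H}|^2\,]\,dV_{\widetilde\theta}. \nonumber
\end{align}

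Finally, for $n\ge 1$ both integrands are $\le 0$, and since $a_{\widetilde H}\not\equiv 0$ the second is not identically zero, so $H_I(a_{\widetilde H},a_{\widetilde H})<0$ for every $a\neq 0$; if desired, one obtains the closed form $H_I(a_{\widetilde H},a_{\widetilde H})=-\tfrac{2n^2}{n+1}|a|^2\,\mathrm{Vol}(S^{2n+1})$ from $|a|^2=|a_{\widetilde H}|^2+\langle a,\widetilde T\rangle^2+\langle a,\nu\rangle^2$ and the symmetry integrals $\int_{S^{2n+1}}\langle a,\nu\rangle^2\,dV_{\widetilde\theta}=\int_{S^{2n+1}}\langle a,\widetilde T\rangle^2\,dV_{\widetilde\theta}=\tfrac{|a|^2}{2n+2}\,\mathrm{Vol}(S^{2n+1})$. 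Thus $H_I$ is negative definite on the $(2n+2)$-dimensional subspace $W$, whence $\mathrm{index}(I)\ge 2n+2$. The only nonroutine point is the extrinsic identity $\widetilde\nabla_{\widetilde T}a_{\widetilde H}=-\widetilde J a_{\widetilde H}$; everything else is assembling formulas already established above.
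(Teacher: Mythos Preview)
Your proof is correct and follows essentially the same route as the paper's: both exhibit the $(2n+2)$-dimensional space coming from constant vectors in $\mathbb{R}^{2n+2}$ (the paper phrases this as the complement of $\underline{i}$ in $\underline{c}$), plug $V=a_{\widetilde H}$ into (\ref{49}), and reduce everything to the identity $\widetilde\nabla_{\widetilde T}a_{\widetilde H}=-\widetilde J a_{\widetilde H}$. The only difference is that you derive this identity explicitly via the extrinsic cancellation argument, whereas the paper simply cites it as Lemma~6.9 of [1]; you also supply the injectivity of $a\mapsto a_{\widetilde H}$ and the closed-form value of $H_I$, which the paper leaves implicit.
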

\begin{proof}\quad For $V \in \underline{c}$, we have $|L_{V}g_{\widetilde{\theta}}|_{\widetilde H}^2=2\frac{(divV_{\widetilde H})^2}{n}$ so that (\ref{49}) yields
\begin{align}
H_I(V,V)=\int_{S^{2n+1}}\frac{1-n}{n}(divV_{\widetilde H})^2+2 \langle \widetilde{\nabla}_{\widetilde T}V_{\widetilde H},\widetilde JV_{\widetilde H} \rangle dV_{\widetilde{\theta}}   \label{50}
\end{align}
\indent If $V$ is in the orthogonal complement of $\underline{i}$ in $\underline{c}$, it may be seen as restrictions of the constant vector fields on $R^{2n+2}$ by the standard embedding $S^{2n+1}\rightarrow R^{2n+2}$.
It has been shown that(cf. Lemma 6.9 in [1]) $\widetilde{\nabla}_{\widetilde{T}}V_{\widetilde{H}}=-\widetilde{J}V_{\widetilde{H}}$.
We may substitute this identity into (\ref{50}) to get
\begin{align}
H_{I}(V,V)=\int_{S^{2n+1}}[\frac{1-n}{n}(divV_{\widetilde{H}})^2 -2 |V_{\widetilde{H}}| ^2]dV_{\widetilde{\theta}}. \nonumber
\end{align}
Moreover, if $V$ is in the orthogonal complement of $\underline{i}$ in $\underline{c}$, by Proposition \ref  {51} we have
$divV_{\widetilde H}\neq0$ ;
hence
$H_I(V,V)< 0$. Then we obtain index($I$)$\geq$dim$(\underline{c}/ \underline{i})$. i.e. index($I$)$\geq$ 2n+2.
\end{proof}
\section{Pseudoharmonic maps into pseudo-Hermitian submanifolds in Heisenberg groups }
\ \ \ \ In this section, we want to give a condition on the CR weigarten map of a pseudo-Hermitian immersed submanifold $N$ of Heisenberg group which implies that any nonconstant pseudoharmonic map $f$ from a closed pseudo-Hermitian manifold to N is unstable. Firstly we introduce some notions(see [2] for details).\\
\indent Let ($M, H(M), J,\theta$) and $(K,H(K),J_{K},\Theta)$ be two pseudo-Hermitian manifolds of real dimensions $m$ and $m+k$ respectively. We say that a map $f:M\rightarrow K  $ is a CR immersion if $f$ is a CR map and $rank(d_xf)=dimM$ at any $ x\in M$.
\begin{dfn}([2])\quad
Let $f:M\rightarrow K$ be a CR immersion. Then f is called an isopseudo-Hermitian immersion if $f^* \Theta = \theta$ .
\end{dfn}
\begin{rmk}\quad
Using the fact that $f$ is a CR map, we have $J_K \circ f_*=f_* \circ J_M$ and thus $f^*G_{\Theta}=G_{\theta}$.
In general, the immersion $f$ is not isometric(with respect to the Riemannian metrics $g_{\theta}$ and $g_{\Theta}$ ).
\end{rmk}
\indent For simplicity, let's identify $M$ with f(M) and denote the immersion by $i:M \hookrightarrow K$. Since $(K,H(K),J_K,\Theta)$ is a pseudo-Hermitian manifold, ($K,g_{\Theta}$) is a Riemannian manifold.
We can define a vector field $W^{\top}$ tangent to $M$ and a vector field $W^{\bot}$ normal to $N$ by $W^{\top}(x)=tan_xW $ and $W^{\bot}(x)=nor_xW $ for any $x\in M$.  \\
\indent We denote $\nabla ^K$ to be the Tanaka-Webster connection on $K$. Then we set
\begin{align}
\nabla_X Y=(\nabla_{X}^K Y)^{\top},  \label{23} \\
\alpha(X,Y)=(\nabla_{X}^{K}Y)^{\bot}
\end{align}
for any $X \in \Gamma(TM)$ and $\eta$ $\in \Gamma(T^{\bot}M)$.
It is easy to prove that $\nabla$ is a linear connection on $M$, while $\alpha$ is $C^{\infty}(M)$-bilinear and has values in $T^{\bot}M$. Then we obtain the following CR Gauss formula([2]):
\begin{align}
\nabla_{X}^K Y=\nabla _X Y+\alpha(X,Y)
\end{align}
We can also set
\begin{align}
a_{\eta}X=-(\nabla_{X}^K \eta)^{\top}  \\
\nabla _X^{\bot} \eta =(\nabla _{X} ^K \eta )^{\bot}.
\end{align}
for any $X \in \Gamma(TM)$ and $\eta$ $\in \Gamma(T^{\bot}M)$. Then $a$ is $C^{\infty}(M)$-bilinear, while $\nabla ^{\bot}$ is a connection in $T^{\bot}M$. So we have the following CR Wegarten formula([2]):
\begin{align}
\nabla _{X}^K \eta = - a_{\eta}X+ \nabla _X^{\bot} \eta.
\end{align}
The connection $\nabla$ in (\ref{23}) does not coincide with the Tanaka-Webster connection of $(M,\theta)$ in general, nor is $\alpha (f)$ symmetric.\\
\indent Let $i:M\rightarrow K$ be an isopseudo-Hermitian CR immersion between two pseudo-Hermitian manifolds. In [2], it has been proved that $i^*g_{\Theta}=g_{\theta}$ if and only if $T_K^{\bot}=0$.\\
\indent According to the above statement, we have the following definition.
\begin{dfn}([2])\quad \label{25}
A pseudo-Hermitian immersion is an isopseudo-Hermitian CR immersion with the additional property $T_{K}^{\bot}=0$.
\end{dfn}
\begin{rmk}\quad
If $i:M\hookrightarrow K$ is a pseudo-Hermitian immersion, we have $T_M=T_{K}|_{M}$(see details for [2]).
\end{rmk}
Then we have the following theorem.(cf. [2], page354)
\begin{thm}([2])\quad \label{24}
Let ($M,H(M),J,\theta$) and ($K,H(K),J_{K},\Theta$) be two pseudo-Hermitian manifolds and $i:M\rightarrow K $ a pseudo-Hermitian immersion. Then
\begin{enumerate}[(i)]
\item $\nabla$ is the Tanaka-Webster connection of $(M,\theta)$.
\item $\pi _H \alpha \ is \ symmetric $. Here $\pi _{H} \alpha$ be a vector-valued form defined by
\begin{align}
(\pi _H \alpha )(X,Y) =\alpha  (\pi _H X, \pi _H Y) \nonumber
\end{align}
for any $X$,$Y$ $\in TM$.
\item $a_{\eta}$ is $H(M)$-valued and for any $x \in M$, $(a_{\eta})_x: H(M)_x \rightarrow H(M)_x$ is self-adjoint(with respect to $G_{\theta , x}$).
\end{enumerate}
\end{thm}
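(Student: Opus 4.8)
The plan is to prove all three assertions at once by verifying that the connection $\nabla$ defined by $(\ref{23})$ satisfies every one of the defining properties of the Tanaka--Webster connection listed in Theorem \ref{80}; assertion (i) is then the uniqueness clause of that theorem, and assertions (ii) and (iii) will be by-products of the torsion computation and of the metric compatibility of $\nabla^{K}$.

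First I would record the preliminary facts used throughout. Since $i$ is a pseudo-Hermitian immersion, Definition \ref{25} gives $T^{\bot}_{K}=0$; by the discussion preceding it this is equivalent to $i^{*}g_{\Theta}=g_{\theta}$, so $i$ is an isometric immersion of the Webster metrics, and moreover $T_{M}=T_{K}|_{M}$. Because $i$ is a CR map, $i_{*}(H(M))\subseteq H(K)$ and $J_{K}\circ i_{*}=i_{*}\circ J$ on $H(M)$; together with $J_{K}T_{K}=0$ this shows $J_{K}$ preserves $TM$ with $J_{K}|_{TM}=J$. From $g_{\Theta}(X,T_{K})=\Theta(X)$ and $T_{M}=T_{K}$ one gets $T^{\bot}M\subseteq H(K)$ and $H(M)=TM\cap H(K)$; and since $J_{K}$ is a $G_{\Theta}$-isometry on $H(K)$ preserving $H(M)$, it preserves $T^{\bot}M$ as well. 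Consequently $J_{K}$ commutes with the orthogonal projections $(\cdot)^{\top}$ and $(\cdot)^{\bot}$, and, $H(K)$ being $\nabla^{K}$-parallel, $\nabla^{K}_{X}Y\in\Gamma(H(K))$ for $Y\in\Gamma(H(M))$ and $\nabla^{K}_{X}\eta\in\Gamma(H(K))$ for $\eta\in\Gamma(T^{\bot}M)$, whenever $X\in\Gamma(TM)$.

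Now the verification. Metric compatibility $\nabla g_{\theta}=0$ is the usual consequence of $\nabla^{K}g_{\Theta}=0$ and $i^{*}g_{\Theta}=g_{\theta}$, and differentiating $\langle Y,\eta\rangle=0$ gives the familiar Weingarten identity $\langle a_{\eta}X,Y\rangle=\langle\alpha(X,Y),\eta\rangle$. Parallelism of $H(M)$ follows since for $Y\in\Gamma(H(M))$ we have $\nabla^{K}_{X}Y\in\Gamma(H(K))$, whose tangential part lies in $TM\cap H(K)=H(M)$ (the normal part being in $T^{\bot}M\subseteq H(K)$). For $\nabla J=0$, using that $J_{K}$ commutes with $(\cdot)^{\top}$ and $J_{K}|_{TM}=J$, one has $(\nabla_{X}J)Y=(\nabla^{K}_{X}(J_{K}Y))^{\top}-(J_{K}\nabla^{K}_{X}Y)^{\top}=((\nabla^{K}_{X}J_{K})Y)^{\top}=0$. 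For the torsion I would decompose, for $X,Y\in\Gamma(TM)$,
\[
T_{\nabla^{K}}(X,Y)=T_{\nabla}(X,Y)+\big(\alpha(X,Y)-\alpha(Y,X)\big),
\]
the first term tangential and the second normal, and then substitute Lemma \ref{81}, $T_{\nabla^{K}}=\Theta\wedge\tau_{K}+d\Theta\otimes T_{K}$, for the three relevant kinds of arguments: for $X,Y\in\Gamma(T_{1,0}M)\subseteq\Gamma(T_{1,0}K)$ one gets $T_{\nabla^{K}}(X,Y)=0$ by $(\ref{73})$; for $X\in\Gamma(T_{1,0}M)$, $\overline{W}\in\Gamma(T_{0,1}M)$ one gets $T_{\nabla^{K}}(X,\overline{W})=2\sqrt{-1}\,L_{\Theta}(X,\overline{W})T_{K}$ by $(\ref{74})$, and naturality of $d$ together with $T_{K}=T_{M}$ makes the right-hand side tangential and equal to $2\sqrt{-1}\,L_{\theta}(X,\overline{W})T_{M}$; and $\tau_{\nabla}X=T_{\nabla}(T_{M},X)=(T_{\nabla^{K}}(T_{K},X))^{\top}=(\tau_{K}X)^{\top}$, whence $\tau_{\nabla}(JX)+J(\tau_{\nabla}X)=((\tau_{K}J_{K}+J_{K}\tau_{K})X)^{\top}=0$ by $(\ref{75})$. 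Thus $\nabla$ satisfies $(\ref{73})$, $(\ref{74})$, $(\ref{75})$ and all remaining axioms, and by the uniqueness in Theorem \ref{80} it is the Tanaka--Webster connection of $(M,\theta)$, proving (i).

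The normal parts of the same computation give $\alpha(X,Y)=\alpha(Y,X)$ whenever $X,Y$ are both of type $(1,0)$ or one of each type, hence, by conjugation, for arbitrary horizontal $X,Y$; since $\pi_{H}\alpha(X,Y)=\alpha(\pi_{H}X,\pi_{H}Y)$, this is (ii). For (iii), $\nabla^{K}_{X}\eta\in\Gamma(H(K))$ for $\eta\in\Gamma(T^{\bot}M)$ forces $a_{\eta}X=-(\nabla^{K}_{X}\eta)^{\top}\in TM\cap H(K)=H(M)$, and for $X,Y\in\Gamma(H(M))$ the identity $\langle a_{\eta}X,Y\rangle=\langle\alpha(X,Y),\eta\rangle=\langle\pi_{H}\alpha(X,Y),\eta\rangle$ combined with the symmetry from (ii) gives $\langle a_{\eta}X,Y\rangle=\langle a_{\eta}Y,X\rangle$. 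I expect the only real obstacle to be organizational: keeping track, in the torsion computation, that the characteristic vector $T_{K}$ is tangent to $M$ (so it contributes to $T_{\nabla}$, not to $\alpha$) while every normal vector is horizontal in $K$ (so the Weingarten operators land in $H(M)$). Once these two facts are isolated, each identity is a one-line consequence of Theorem \ref{80} and Lemma \ref{81}.
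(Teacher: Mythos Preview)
The paper does not prove this theorem at all: it is quoted verbatim from reference [2] (Dragomir--Tomassini, \emph{Differential Geometry and Analysis on CR Manifolds}, p.~354), as indicated by the ``([2])'' tag, and no proof is supplied in the text. So there is no in-paper argument to compare against.

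Your proof is correct and is essentially the standard one from [2]. The key structural observations you isolate---that a pseudo-Hermitian immersion forces $T_{K}|_{M}=T_{M}$, hence $T^{\bot}M\subseteq H(K)$ and $H(M)=TM\cap H(K)$, and that $J_{K}$ therefore commutes with the tangential/normal projections---are exactly what drive the argument in the reference. With those in hand, verifying the axioms of Theorem~\ref{80} for the induced connection $\nabla=(\nabla^{K})^{\top}$ is routine: parallelism of $H(M)$ and $\nabla J=0$ follow from the parallelism of $H(K)$ and $\nabla^{K}J_{K}=0$ together with the projection compatibility; the torsion decomposition $T_{\nabla^{K}}(X,Y)=T_{\nabla}(X,Y)+\big(\alpha(X,Y)-\alpha(Y,X)\big)$ (tangential plus normal) lets you read off $(\ref{73})$, $(\ref{74})$, $(\ref{75})$ for $T_{\nabla}$ from the corresponding identities for $T_{\nabla^{K}}$, using $i^{*}\Theta=\theta$ and $T_{K}=T_{M}$; and the normal parts of that same decomposition give the symmetry of $\pi_{H}\alpha$. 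Finally, $a_{\eta}$ being $H(M)$-valued comes from $T^{\bot}M\subseteq H(K)$ and $\nabla^{K}$-parallelism of $H(K)$, while self-adjointness is (\ref{26}) plus (ii). Your closing remark about the only obstacle being organizational is apt; there is no missing idea.
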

In fact, the tensors $\alpha$ and $a$ can also related by
\begin{align}
g_{\Theta}(\alpha(X,Y), \eta) =g_{\theta}(a_{\eta}X,Y)  \label{26}
\end{align}
for any $X$,$Y$ $ \in $ $\Gamma(TM)$, $\eta \in \Gamma(T^{\bot}M)$. \\
\\
\indent Next we will consider a special pseudo-Hermitian immersion. Let $K=H_{n+k}$ be the Heisenberg group(with the standard strictly pseudoconvex pseudo-Hermitian structure). By Example \ref{92} we know it is a Sasakian manifold. Let $(N,\widetilde{H}(N),\widetilde{J},\widetilde {\theta})$ be a pseudo-Hermitian manifold of dimension 2n+1.\\
\indent From now on, we always assume that $i:(N,\widetilde{H}(N),\widetilde{J},\widetilde{\theta})\rightarrow H_{n+k}$ is a pseudo-Hermitian immersion.\\
\indent Let $ \overline{\nabla},\widetilde {\nabla}$ be the Tanaka-Webster connections on $\mathbf{H}_{n+k}$ and $N$ respectively. Since $i$ is a pseudo-Hermitian immersion,  by Theorem \ref{24} we have
\begin{align}
\overline{\nabla}_XY
=\widetilde{\nabla}_X Y+\alpha(X,Y)   \label{27}
\end{align}
and
\begin{align}
\overline{\nabla}_X \eta =-a_{\eta}X+\nabla_{X}^{\bot} \eta       \label{28}
\end{align}
for any $X,Y\in\Gamma(TN),\eta \in \Gamma(T^{\bot}N) $.\\
\indent We have the following lemma:
\begin{lem}\quad \label{36}
For any $\eta \in \Gamma(T^{\bot}N)$, $X \in \Gamma(TN)$, $a_{\eta} \widetilde{T}=0$ and $\alpha(\widetilde{T},X)=0$.
Here we still use $\widetilde T$ to denote the characteristic direction on N.
\end{lem}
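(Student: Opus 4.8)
The plan is to reduce the statement to two structural facts about the ambient space $\mathbf{H}_{n+k}$: it is Sasakian, so its pseudo-Hermitian torsion $\overline{\tau}$ vanishes; and, because $i$ is a pseudo-Hermitian immersion, the characteristic direction $\widetilde{T}$ of $N$ is the restriction to $N$ of the characteristic direction $\overline{T}$ of $\mathbf{H}_{n+k}$ (the remark following Definition \ref{25}), in particular $\widetilde{T}$ is tangent to $N$. Throughout I would write $\overline{\nabla}$ for the Tanaka--Webster connection of $\mathbf{H}_{n+k}$, $\widetilde{\nabla}$ for that of $N$, and use the identity $\overline{\nabla}\,\overline{T}=0$ (recorded after Theorem \ref{80}), together with the CR Gauss--Weingarten formulas (\ref{27}), (\ref{28}), (\ref{26}).

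First I would establish $\alpha(X,\widetilde{T})=0$ for all $X\in\Gamma(TN)$. By (\ref{27}), $\alpha(X,\widetilde{T})$ is exactly the $T^{\bot}N$-component of $\overline{\nabla}_X\widetilde{T}$; and $\overline{\nabla}_X\widetilde{T}=\overline{\nabla}_X\overline{T}=0$, so $\alpha(X,\widetilde{T})=0$. Next, for $\alpha(\widetilde{T},X)$ I would use the torsion of $\overline{\nabla}$ to write $\overline{\nabla}_{\widetilde{T}}X=\overline{\nabla}_X\widetilde{T}+[\widetilde{T},X]+T_{\overline{\nabla}}(\widetilde{T},X)$. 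Here $\overline{\nabla}_X\widetilde{T}=0$ as just shown; $[\widetilde{T},X]$ is tangent to $N$, since $\widetilde{T}$ and $X$ are; and $T_{\overline{\nabla}}(\widetilde{T},X)=T_{\overline{\nabla}}(\overline{T},X)=\overline{\tau}(X)=0$ because $\mathbf{H}_{n+k}$ is Sasakian. Hence $\overline{\nabla}_{\widetilde{T}}X$ is tangent to $N$, so its normal component $\alpha(\widetilde{T},X)$ vanishes.

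For $a_{\eta}\widetilde{T}=0$ with $\eta\in\Gamma(T^{\bot}N)$ I would invoke the relation (\ref{26}) between $\alpha$ and $a$: for every $Y\in\Gamma(TN)$, $\langle a_{\eta}\widetilde{T},Y\rangle=\langle\alpha(\widetilde{T},Y),\eta\rangle=0$ by the previous step, and since $\langle\,,\,\rangle$ is nondegenerate on $TN$ this forces $a_{\eta}\widetilde{T}=0$. (Alternatively, and self-containedly, one may compute $\langle\overline{\nabla}_{\widetilde{T}}\eta,Y\rangle=\widetilde{T}\langle\eta,Y\rangle-\langle\eta,\overline{\nabla}_{\widetilde{T}}Y\rangle=0$ for all $Y\in\Gamma(TN)$, using that $\langle\eta,Y\rangle\equiv 0$ along $N$ and that $\overline{\nabla}_{\widetilde{T}}Y$ is tangent to $N$ by the computation above; this gives $(\overline{\nabla}_{\widetilde{T}}\eta)^{\top}=0$, i.e. $a_{\eta}\widetilde{T}=0$.)

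I do not expect a real obstacle here. The two points that require attention are the identification $\widetilde{T}=\overline{T}|_N$, which is what licenses the use of $\overline{\nabla}\,\overline{T}=0$, and the observation in the torsion identity that the bracket term $[\widetilde{T},X]$ remains tangent to $N$; this, combined with $\overline{\tau}=0$, is precisely what keeps $\overline{\nabla}_{\widetilde{T}}X$ tangent. Everything else is routine bookkeeping with the Gauss--Weingarten formulas.
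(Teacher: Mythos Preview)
Your argument is correct and follows essentially the same route as the paper: you use $\widetilde T=\overline T|_N$ together with $\overline{\nabla}\,\overline T=0$, the Sasakian condition $\overline{\tau}=0$ to kill the torsion term, and the fact that $[\widetilde T,X]$ is tangent to $N$, then deduce $a_{\eta}\widetilde T=0$ from $\alpha(\widetilde T,\cdot)=0$ via the pairing (\ref{26}). The paper's proof is the same computation, written slightly more tersely (it folds the torsion step into the line $(\overline{\nabla}_{\widetilde T}X)^{\bot}=(\overline{\nabla}_X\widetilde T+[\widetilde T,X])^{\bot}$ without naming $\overline{\tau}$ explicitly).
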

\begin{proof}\quad For $ X \in \Gamma(TN)$, by (\ref{26}) we have \\
\begin{align}
 \langle a_{\eta} \widetilde{T},X \rangle = \langle \alpha(\widetilde{T},X),\eta \rangle .
\end{align}
Since $\mathbf{H}_{n+k}$ is sasakian, using $T_{\mathbf{H}_{n+k}}|_N=\widetilde{T}, \overline{\nabla} T_{\mathbf{H}_{n+k}}=0,$ and $\widetilde{\nabla} \widetilde{T}=0$ we get
\begin{align}
\alpha(\widetilde{T},X)=(\overline{\nabla}_{\widetilde{T}}X)^{\bot}
=(\overline{\nabla}_X \widetilde{T}+[\widetilde{T},X])^{\bot}
=\overline{\nabla}_X T_{\mathbf{H}_{n+k}}-\widetilde{\nabla}_X\widetilde{T}
=0. \nonumber
\end{align}
\end{proof}
\indent Since $\mathbf{H}_{n+k}$ with the standard strictly pseudoconvex pseudo-Hermitian structure is Sasakian, then $(H,\widetilde{H}(N),\widetilde {J},\widetilde{\theta})$ is also sasakian.
In fact, by Definition \ref{25}, Theorem \ref{24} and $\widetilde{\nabla}\widetilde T=0$ again, we can perform the following calculations:
\begin{align}
\widetilde{\tau}(X) =T_{\widetilde{\nabla}}(\widetilde{T},X)
&=(\overline{\nabla}_{\widetilde{T}}X)^{\top}-[\widetilde{T},X] \nonumber \\
&=(\overline{\nabla}_{T_{\mathbf{H}_{n+k}}|_N}X-[T_{\mathbf{H}_{n+k}}|_N,X])^{\top} \nonumber \\
&=(\tau_{\mathbf{H}_{n+k}}|_N(X))^{\top}    \nonumber \\
&=0
\end{align}
for any $X\in \Gamma(TN)$.
Here $\widetilde{\tau}$ is the pseudo-Hermitian torsion on $N$.\\
\\
\indent For each $x\in N$, let $\{\eta_{2n+2},\cdots,\eta_{2n+2k+1}\}$ be an orthonomal basis for the normal space $T_x^{\bot}N$. According to ($iii$) in Theorem \ref{24}, we can define a selfadjoint linear map $P_x^N:\ \widetilde{H}(N)_x\rightarrow \widetilde H(N)_x$ by
\begin{align}
P_x^N=\sum_{i=2n+2}^{2n+2k+1}[2a_{\eta_{i}}^2-tr_{g_{\widetilde{\theta}}}(a_{\eta_{i}})\cdot a_{\eta_{i}}].
\end{align}
It is easy to see that $P_x^N$ does not depend on the choice of $\{\eta_i\}_{i=2n+2}^{2n+2k+1}$.\\
\\
\indent Let $V$ be a vector in $H_{n+k}$. $V$ can be identified with a parallel vector field on $H_{n+k}(\overline{\nabla}V=0)$ . For any $W\in \Gamma(TN)$, we define a tensor $\mathcal{A}^W$ in $Hom(TN,TN)$ corresponding to $W$ by $\mathcal{A}^W(X)=\widetilde{\nabla}_XW$, for any $X\in \Gamma(TN)$. \\
\indent If we let $W$ be $V^{\top}$, using (\ref{27}), (\ref{28}) and $\overline{\nabla}V=0$ by a direct computation we obtain
\begin{align}
\mathcal{A}^{V^{\top}}(X)=\widetilde{\nabla}_X V^{\top}
=a_{V^{\bot}}(X) \nonumber
\end{align}
and
\begin{align}
\nabla^{\bot}_X V^{\bot}=(\overline{\nabla}_X V^{\bot})^{\bot}
=-\alpha(X,V^{\top}).    \nonumber
\end{align}
These imply
\begin{align}
(\widetilde{\nabla}_{V^{\bot}}\mathcal{A}^{V^{\top}})=(\overline{\nabla}_{V^{\top}}a)_{V^{\bot}}-a_{\alpha(V^{\top},V^{\top})},\label{34}
\end{align}
where $(\overline{\nabla}_Xa)_{V^{\bot}}=(\widetilde{\nabla}_{X}a_{V^{\bot}})-a_{\nabla_X^{\bot}V^{\bot}}$, for any $X \in\Gamma(TN)$.
\begin{lem}\quad  \label{33}
For any $X \in\Gamma(TN)$, \\
\begin{align}
\widetilde{R}(V^{\top},X,V^{\top},X)&=- \langle (\widetilde{\nabla}_{V^{\top}}\mathcal{A}^{V^{\top}})(X),X \rangle - \langle \mathcal{A}^{V^{\top}} \mathcal{A}^{V^{\top}}(X),X \rangle  \nonumber \\
&+ \langle \widetilde{\nabla}_X(\mathcal{A}^{V^{\top}}(V^{\top})),X \rangle
\end{align}
\end{lem}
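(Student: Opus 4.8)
The plan is to expand $\widetilde{R}(V^{\top},X)V^{\top}$ directly from the definition of the curvature operator of the Tanaka-Webster connection $\widetilde{\nabla}$ and then convert every covariant derivative of $V^{\top}$ into the tensor $\mathcal{A}^{V^{\top}}$ via its defining relation $\mathcal{A}^{V^{\top}}(Y)=\widetilde{\nabla}_{Y}V^{\top}$. With the curvature sign convention of the paper one has $\widetilde{R}(V^{\top},X,V^{\top},X)=\langle\widetilde{\nabla}_{X}\widetilde{\nabla}_{V^{\top}}V^{\top}-\widetilde{\nabla}_{V^{\top}}\widetilde{\nabla}_{X}V^{\top}+\widetilde{\nabla}_{[V^{\top},X]}V^{\top},X\rangle$. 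First I would rewrite the three pieces: $\widetilde{\nabla}_{X}\widetilde{\nabla}_{V^{\top}}V^{\top}=\widetilde{\nabla}_{X}(\mathcal{A}^{V^{\top}}(V^{\top}))$; by the Leibniz rule for the $(1,1)$-tensor $\mathcal{A}^{V^{\top}}$, $\widetilde{\nabla}_{V^{\top}}\widetilde{\nabla}_{X}V^{\top}=(\widetilde{\nabla}_{V^{\top}}\mathcal{A}^{V^{\top}})(X)+\mathcal{A}^{V^{\top}}(\widetilde{\nabla}_{V^{\top}}X)$; and $\widetilde{\nabla}_{[V^{\top},X]}V^{\top}=\mathcal{A}^{V^{\top}}([V^{\top},X])$.

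Next I would replace the Lie bracket by the torsion of $\widetilde{\nabla}$: $[V^{\top},X]=\widetilde{\nabla}_{V^{\top}}X-\widetilde{\nabla}_{X}V^{\top}-T_{\widetilde{\nabla}}(V^{\top},X)=\widetilde{\nabla}_{V^{\top}}X-\mathcal{A}^{V^{\top}}(X)-T_{\widetilde{\nabla}}(V^{\top},X)$. Substituting this into $\mathcal{A}^{V^{\top}}([V^{\top},X])$, the two occurrences of $\mathcal{A}^{V^{\top}}(\widetilde{\nabla}_{V^{\top}}X)$ cancel and one is left with $\widetilde{R}(V^{\top},X,V^{\top},X)=\langle\widetilde{\nabla}_{X}(\mathcal{A}^{V^{\top}}(V^{\top})),X\rangle-\langle(\widetilde{\nabla}_{V^{\top}}\mathcal{A}^{V^{\top}})(X),X\rangle-\langle\mathcal{A}^{V^{\top}}\mathcal{A}^{V^{\top}}(X),X\rangle-\langle\mathcal{A}^{V^{\top}}(T_{\widetilde{\nabla}}(V^{\top},X)),X\rangle$. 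This is precisely the asserted identity as soon as the last, torsion-born term is shown to vanish.

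To dispose of that term I would use that $N$ is Sasakian — established just above from the fact that $\mathbf{H}_{n+k}$ is Sasakian, so $\widetilde{\tau}=0$ — whence, by $(\ref{72})$, $T_{\widetilde{\nabla}}=d\widetilde{\theta}\otimes\widetilde{T}$ and therefore $T_{\widetilde{\nabla}}(V^{\top},X)=d\widetilde{\theta}(V^{\top},X)\,\widetilde{T}$. Then $\mathcal{A}^{V^{\top}}(\widetilde{T})=\widetilde{\nabla}_{\widetilde{T}}V^{\top}=a_{V^{\bot}}(\widetilde{T})$ by the identity $\widetilde{\nabla}_{Y}V^{\top}=a_{V^{\bot}}(Y)$ recorded above, and $a_{V^{\bot}}(\widetilde{T})=0$ by Lemma \ref{36}; hence the torsion term is zero and the lemma follows after taking $\langle\,\cdot\,,X\rangle$. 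I expect this last cancellation to be the only delicate point: one must check that the torsion contribution really disappears through $a_{\eta}\widetilde{T}=0$, and one must keep careful track of the curvature sign convention so that the three surviving terms carry exactly the signs in the statement; everything else is a mechanical expansion.
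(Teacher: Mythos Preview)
Your proposal is correct and follows essentially the same route as the paper: expand the curvature operator, replace the Lie bracket via the torsion formula $T_{\widetilde{\nabla}}=d\widetilde{\theta}\otimes\widetilde{T}$ (using that $N$ is Sasakian), and kill the resulting $\widetilde{T}$-term through $\mathcal{A}^{V^{\top}}(\widetilde{T})=a_{V^{\bot}}(\widetilde{T})=0$ from Lemma~\ref{36}. The only cosmetic difference is that the paper computes $\langle\widetilde{R}(V^{\top},X)V^{\top},X\rangle$ first and applies the sign convention $\widetilde{R}(V^{\top},X,V^{\top},X)=-\langle\widetilde{R}(V^{\top},X)V^{\top},X\rangle$ at the end, whereas you build the sign in from the start.
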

\begin{proof}\quad
By the definition of the curvature tensor we have
\begin{align}
 \langle \widetilde{R}(V^{\top},X)V^{\top},X \rangle = \langle \widetilde{\nabla}_{V^{\top}}\widetilde{\nabla}_XV^{\top}-\widetilde{\nabla}_X \widetilde{\nabla}_{V^{\top}}V^{\top}-\widetilde{\nabla}_{[V^{\top},X]}V^{\top},X \rangle  \label{31}
\end{align}
\indent Since N is sasakin, by (\ref{72}) we can get
\begin{align}
[V^{\top},X]&=\widetilde{\nabla}_{V^{\top}}X-\widetilde{\nabla}_X V^{\top}-T_{\widetilde{\nabla}}(V^{\top},X) \nonumber \\
&=\widetilde{\nabla}_{V^{\top}}X-\widetilde{\nabla}_X{V^{\top}}-d\widetilde{\theta}(V^{\top},X)\widetilde{T} \nonumber
\end{align}
\indent Putting this into (\ref{31}) , Using Lemma \ref{36} we obtain
\begin{align}
 \langle \widetilde{R}(V^{\top},X)V^{\top},X \rangle
&= \langle \widetilde{\nabla}_{V^{\top}}\widetilde{\nabla}_XV^{\top},X \rangle - \langle \widetilde{\nabla}_X\widetilde{\nabla}_{V^{\top}}V^{\top},X \rangle \nonumber \\
- \langle \widetilde{\nabla}_{\widetilde{\nabla}_{V^{\top}}X}V^{\top},&X \rangle
+ \langle \widetilde{\nabla}_{\widetilde{\nabla}_XV^{\top}}V^{\top},X \rangle +
d\widetilde{\theta} (V^{\top},X) \langle \widetilde{\nabla}_{\widetilde{T}}V^{\top},X \rangle  \nonumber \\
= \langle (\widetilde{\nabla}_{V^{\top}}\mathcal{A}^{V^{\top}})&(X),X \rangle + \langle \mathcal{A}^{V^{\top}}\mathcal{A}^{V^{\top}}(X),X \rangle
- \langle \widetilde{\nabla}_X(\mathcal{A}^{V^{\top}}(V^{\top})),X \rangle . \nonumber
\end{align}
Then we can get
\begin{align}
\widetilde{R}(V^{\top},X,V^{\top},X)&=- \langle (\widetilde{\nabla}_{V^{\top}}\mathcal{A}^{V^{\top}})(X),X \rangle - \langle \mathcal{A}^{V^{\top}} \mathcal{A}^{V^{\top}}(X),X \rangle  \nonumber \\
&+ \langle \widetilde{\nabla}_X(\mathcal{A}^{V^{\top}}(V^{\top})),X \rangle , \nonumber
\end{align}
since $\widetilde{R}(V^{\top},X,V^{\top},X)=- \langle \widetilde R(V^{\top},X)V^{\top},X \rangle $.
\end{proof}
\begin{thm}\quad
Let $i:(N,\widetilde{H}(N),\widetilde J,\widetilde \theta)\rightarrow \mathbf{H}_{n+k}$ be a pseudo-Hermitian immersion and ($M,H(M),J,\theta$) be a closed pseudo-Hermitian manifold. Let $f:(M,H(M),J,\theta)\rightarrow (N,\widetilde{H}(N),\widetilde{J},\widetilde{\theta})$ be a nonconstant pseudoharmonic map. Assume that $P_{y}^{N}$ is negative definite on $\widetilde{H}(N)$ at each point $y$ of N(i.e. for all $X\neq0$ and $X\in\Gamma(\widetilde{H}(N))$,
$ \langle P^{N}X,X \rangle \; < 0$).
Then $f$ is unstable.
\end{thm}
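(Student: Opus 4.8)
The plan is to imitate the proof of Theorem \ref{107}, replacing the constant vector fields of $\mathbb{C}^{n+k}$ by an $\overline{\nabla}$-parallel orthonormal frame of the ambient Heisenberg group and then summing the resulting second variations over that frame. The key preliminary remark is that, since $\mathbf{H}_{n+k}$ is Sasakian and $i$ is a pseudo-Hermitian immersion, $N$ is Sasakian, i.e. $\widetilde{\tau}=0$ (this is precisely the computation of $\widetilde{\tau}(X)$ carried out just above the statement). Hence the second variation formula (\ref{200}) of Remark \ref{93} applies, and for a variation whose variation field $W\in\Gamma(f^{-1}\widetilde{H}(N))$ is kept horizontal it reads
\begin{equation*}
H_f(W,W)=\int_M\sum_{\lambda=1}^{2m}\Bigl[\langle\widetilde{\nabla}_{e_\lambda}W,\widetilde{\nabla}_{e_\lambda}W\rangle-\widetilde{R}\bigl(W,df_{H,\widetilde{H}}(e_\lambda),W,df_{H,\widetilde{H}}(e_\lambda)\bigr)\Bigr]dV_\theta .
\end{equation*}

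Given $V\in T_y\mathbf{H}_{n+k}$, identified with the parallel field it generates ($\overline{\nabla}V=0$), put $V_{\widetilde{H}}:=\pi_{\widetilde{H}}(V^{\top})=V^{\top}-\langle V,\widetilde{T}\rangle\widetilde{T}$, a horizontal vector field on $N$, and let $f_t=\varphi_t\circ f$ be the variation generated by its local flow $\varphi_t$; this is admissible with horizontal variation field. Recalling $\widetilde{\nabla}_XV^{\top}=\mathcal{A}^{V^{\top}}(X)=a_{V^{\bot}}X$, and using $\widetilde{\nabla}\widetilde{T}=0$, $\overline{\nabla}\widetilde{T}=0$ (whence $X\langle V,\widetilde{T}\rangle=0$) and Lemma \ref{36}, one gets $\widetilde{\nabla}_{df(e_\lambda)}V_{\widetilde{H}}=a_{V^{\bot}}\bigl(df_{H,\widetilde{H}}(e_\lambda)\bigr)$, and substituting this into the formula above expresses $H_f(V_{\widetilde{H}},V_{\widetilde{H}})$ through $a_{V^{\bot}}$ and $\widetilde{R}(V_{\widetilde{H}},\cdot,V_{\widetilde{H}},\cdot)$.

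Next, fix $x\in M$, set $y=f(x)$, choose an orthonormal basis $\{V_1,\dots,V_{2n},V_{2n+1}=\widetilde{T}_y,\eta_{2n+2},\dots,\eta_{2n+2k+1}\}$ of $T_y\mathbf{H}_{n+k}$ adapted to $\widetilde{H}_y(N)\oplus\mathbf{R}\widetilde{T}_y\oplus T^{\bot}_yN$, extend it to a parallel frame, and sum $H_f((V_i)_{\widetilde{H}},(V_i)_{\widetilde{H}})$ over all $i$. At $y$ the terms $|\widetilde{\nabla}_{e_\lambda}(\cdot)_{\widetilde{H}}|^2$ collapse to $\sum_{i=2n+2}^{2n+2k+1}\langle a_{\eta_i}^2\,df_{H,\widetilde{H}}(e_\lambda),df_{H,\widetilde{H}}(e_\lambda)\rangle$. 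For the curvature terms one first uses that $N$ is Sasakian, so $\widetilde{R}(\widetilde{T},\cdot,\cdot,\cdot)=0$ and hence $\widetilde{R}((V_i)_{\widetilde{H}},X,(V_i)_{\widetilde{H}},X)=\widetilde{R}(V_i^{\top},X,V_i^{\top},X)$ with $X=df_{H,\widetilde{H}}(e_\lambda)$, and then invokes Lemma \ref{33} with $\mathcal{A}^{V_i^{\top}}=a_{V_i^{\bot}}$ and (\ref{34}). The crucial step, and the one I expect to require the most care, is that after summation over $i$ the two ``derivative'' contributions vanish: because $\{V_i\}$ is a \emph{pointwise} orthonormal frame, $\sum_i\langle V_i,A\rangle\langle V_i,B\rangle=\langle A,B\rangle$ for all $A,B$, so $\sum_iV_i^{\top}\otimes V_i^{\bot}=0$ (tangent vectors being orthogonal to normal ones), which forces $\sum_i(\overline{\nabla}_{V_i^{\top}}a)_{V_i^{\bot}}=0$ by tensoriality; and the identity $\langle\sum_ia_{V_i^{\bot}}(V_i^{\top}),W\rangle=\sum_i\langle\alpha(V_i^{\top},W),V_i^{\bot}\rangle$ from (\ref{26}), together with the same orthogonality, gives $\sum_ia_{V_i^{\bot}}(V_i^{\top})\equiv0$, hence $\sum_i\widetilde{\nabla}_X\bigl(\mathcal{A}^{V_i^{\top}}(V_i^{\top})\bigr)=0$. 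What survives is a second copy of $\sum_{i=2n+2}^{2n+2k+1}a_{\eta_i}^2$ (from $-\sum_i(\mathcal{A}^{V_i^{\top}})^2$) and, using $\sum_i\alpha(V_i^{\top},V_i^{\top})=\sum_{i=2n+2}^{2n+2k+1}tr_{g_{\widetilde{\theta}}}(a_{\eta_i})\,\eta_i$ (the same orthogonality relation restricted to $\widetilde{H}(N)$), the term $-\sum_{i=2n+2}^{2n+2k+1}tr_{g_{\widetilde{\theta}}}(a_{\eta_i})\,a_{\eta_i}$ (from $-\sum_ia_{\alpha(V_i^{\top},V_i^{\top})}$). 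Collecting these gives the analogue of (\ref{101}),
\begin{equation*}
\sum_{i=1}^{2n+2k+1}H_f\bigl((V_i)_{\widetilde{H}},(V_i)_{\widetilde{H}}\bigr)=\int_M\sum_{\lambda=1}^{2m}\bigl\langle P^N\bigl(df_{H,\widetilde{H}}(e_\lambda)\bigr),df_{H,\widetilde{H}}(e_\lambda)\bigr\rangle\,dV_\theta .
\end{equation*}

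Finally, since $f$ is nonconstant one has $\sum_\lambda|df_{H,\widetilde{H}}(e_\lambda)|^2>0$ on a nonempty open set (cf. Lemma \ref{104}); together with the negative definiteness of $P^N$ on $\widetilde{H}(N)$ this makes the right-hand side strictly negative, so at least one of the $H_f((V_i)_{\widetilde{H}},(V_i)_{\widetilde{H}})$ is negative, and $f$ is unstable by Definition \ref{87}. As in Remark \ref{103}, the hypothesis on $P^N$ can in fact be relaxed to $\sum_\lambda\langle P^N(df_{H,\widetilde{H}}(e_\lambda)),df_{H,\widetilde{H}}(e_\lambda)\rangle<0$.
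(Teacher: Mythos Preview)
Your argument is correct and follows the same overall strategy as the paper: use the Sasakian second variation formula (\ref{200}), expand the curvature term via Lemma \ref{33} together with (\ref{34}), sum over a $\overline{\nabla}$-parallel orthonormal frame of $\mathbf{H}_{n+k}$, and identify the result with $\int_M\sum_\lambda\langle P^N(df_{H,\widetilde H}(e_\lambda)),df_{H,\widetilde H}(e_\lambda)\rangle\,dV_\theta$.

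The one genuine difference is how the two ``derivative'' pieces coming from Lemma \ref{33}, namely $(\overline{\nabla}_{V^{\top}}a)_{V^{\bot}}$ and $\widetilde{\nabla}_X\bigl(\mathcal{A}^{V^{\top}}(V^{\top})\bigr)$, are disposed of. The paper first integrates the latter by parts against the pseudoharmonic equation $\tau_{H,\widetilde H}(f)=0$, producing an extra term involving $\widetilde{\theta}(df(e_\lambda))\langle(\overline{\nabla}_{\widetilde T}a)_{V^{\bot}}(V^{\top}),\pi_{\widetilde H}df(e_\lambda)\rangle$, and only afterwards evaluates the summed integrand at the adapted point $x$, where for each $j$ one of $V_j^{\top}|_x$, $V_j^{\bot}|_x$ vanishes. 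You bypass the integration by parts entirely: because $\overline{\nabla}g_{\Theta}=0$, the parallel frame stays orthonormal at \emph{every} point of $N$, so the tensorial identities $\sum_i(\overline{\nabla}_{V_i^{\top}}a)_{V_i^{\bot}}=0$ and $\sum_i a_{V_i^{\bot}}(V_i^{\top})=0$ hold identically, not just at the chosen point, and the derivative terms drop out of the sum before any integration. This is cleaner and makes the role of the pseudoharmonic hypothesis transparent (it enters only through the validity of the second variation formula itself). One small caveat: your appeal to Lemma \ref{104} at the end is only heuristic, since that lemma assumes $f$ horizontal; the paper does not justify $df_{H,\widetilde H}\not\equiv 0$ explicitly either, so you are in the same position there.
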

\begin{proof}\quad Firstly we consider the second variation of $f$ corresponding to $V^{\top}$(as above). Since N is sasakian, by Remark \ref{93}, we have
\begin{align}
&H_{f}(V^{\top},V^{\top}) \nonumber \\
&=\int_M\sum_{\lambda=1} ^{2m}[ \langle \widetilde{\nabla}_{df(e_{\lambda})}(V^{\top})_{\widetilde H},\widetilde{\nabla}_{df(e_{\lambda})}(V^{\top})_{\widetilde H} \rangle - \langle \widetilde{R}(V^{\top},
\pi_{\widetilde{H}}df(e_{\lambda}),V^{\top},\pi_{\widetilde H}df(e_{\lambda}) \rangle ]dV_{\theta}, \label{32}
\end{align}
where $\{e_{\lambda} \}_{\lambda=1}^{2m}$ is a local orthonormal frame of $H(M)$.\\
\indent Using $(iii)$ in Theorem \ref{24} and Lemma \ref{36}, the first term in (\ref{32}) may be written as
\begin{align}
&\int_M\sum_{\lambda=1} ^{2m} \langle \widetilde{\nabla}_{df(e_{\lambda})}(V^{\top})_{\widetilde H},\widetilde{\nabla}_{df(e_{\lambda})}(V^{\top})_{\widetilde H} \rangle  \nonumber \\
&=\int_{M}\sum_{\lambda=1}^{2m} \langle  a_{V^{\bot}}(df(e_{\lambda})),a_{V^{\bot}}(df(e_{\lambda})) \rangle  \nonumber \\
&=\int_{M}\sum_{\lambda=1}^{2m} \langle a_{V^{\bot}}(\pi_{\widetilde{H}}(df(e_{\lambda}))),a_{V^{\bot}}(\pi_{\widetilde{H}}(df(e_{\lambda}))) \rangle  \nonumber \\
&=\int_{M}\sum_{\lambda=1}^{2m} \langle a_{V^{\bot}}^2(\pi_{\widetilde{H}}(df(e_{\lambda}))),\pi_{\widetilde{H}}(df(e_{\lambda})) \rangle  \label{40}
\end{align}
\indent By (\ref{34}) and Lemma \ref{33}, the second term in (\ref{32}) may be written as
\begin{align}
&\int_M \sum_{\lambda=1}^{2m}\widetilde{R}(V^{\top},\pi_{\widetilde{H}}df(e_{\lambda}),V^{\top},\pi_{\widetilde H}df(e_{\lambda})) \nonumber \\
&=\int_M \sum_{\lambda=1}^{2m}[- \langle (\widetilde{\nabla}_{V^{\top}}\mathcal{A}^{V^{\top}})(\pi_{\widetilde H}df(e_{\lambda})),\pi_{\widetilde H}df(e_{\lambda}) \rangle - \langle \mathcal{A}^{V^{\top}} \mathcal{A}^{V^{\top}}(\pi_{\widetilde H}df(e_{\lambda})),\pi_{\widetilde H}df(e_{\lambda}) \rangle  \nonumber \\
&\ \ \ \ + \langle \widetilde{\nabla}_{\pi_{\widetilde H}df(e_{\lambda})}(\mathcal{A}^{V^{\top}}(V^{\top})),\pi_{\widetilde H}df(e_{\lambda}) \rangle ]dV_{\theta} \nonumber \\
&=\int _M \sum_{\lambda=1} ^{2m}[- \langle (\overline{\nabla}_{V^{\top}}a)_{V^{\bot}}(\pi_{\widetilde{H}}df(e_{\lambda})),\pi_{\widetilde{H}}df(e_{\lambda}) \rangle
+ \langle a_{\alpha(V^{\top},V^{\top})}(\pi_{\widetilde{H}}df(e_{\lambda})),\pi_{\widetilde{H}}df(e_{\lambda}) \rangle  \nonumber \\
&\ \ \ \ - \langle a_{V^{\bot}}^2(\pi_{\widetilde{H}}(df(e_{\lambda}))),\pi_{\widetilde{H}}(df(e_{\lambda})) \rangle + \langle \widetilde{\nabla}_{\pi_{\widetilde H}df(e_{\lambda})}(\widetilde{\nabla}_{V^{\top}}V^{\top}),\pi_{\widetilde H}df(e_{\lambda}) \rangle ]dV_{\theta}   \label{35}
\end{align}
The last term in (\ref{35}) may be computer as
\begin{align}
&\int_{M} \sum_{\lambda=1}^{2m} \langle \widetilde{\nabla}_{\pi_{\widetilde H}df(e_{\lambda})}(\widetilde{\nabla}_{V^{\top}}V^{\top}),\pi_{\widetilde H}df(e_{\lambda}) \rangle  \nonumber \\
&=\int _M \sum_{\lambda=1}^{2m}[ \langle \widetilde{\nabla}_{df(e_{\lambda})} (\widetilde{\nabla}_{V^{\top}}V^{\top}),\pi_{\widetilde{H}}df(e_{\lambda}) \rangle -\widetilde{\theta}(df(e_{\lambda})) \langle \widetilde{\nabla}_{\widetilde T} (\widetilde{\nabla}_{V^{\top}}V^{\top}),\pi_{\widetilde{H}}df(e_{\lambda}) \rangle ]dV_{\theta} \nonumber \\
&=\int_M \sum_{\lambda=1} ^{2m}[e_{\lambda} \langle \widetilde{\nabla}_{V^{\top}}V^{\top},\pi_{\widetilde{H}}df(e_{\lambda}) \rangle
- \langle \widetilde{\nabla}_{V^{\top}}V^{\top},\widetilde{\nabla}_{e_{\lambda}}(\pi_{\widetilde{H}}df(e_{\lambda})) \rangle  \nonumber \\
&-\widetilde{\theta}(df(e_{\lambda})) \langle \widetilde{\nabla}_{\widetilde T} (\widetilde{\nabla}_{V^{\top}}V^{\top}),\pi_{\widetilde{H}}df(e_{\lambda}) \rangle ]dV_{\theta}. \nonumber
\end{align}
Let $Y\in\Gamma(H(M))$ be locally defined by $Y= \langle \widetilde{\nabla}_{V^{\top}}V^{\top},\pi_{\widetilde{H}}df(e_{\lambda}) \rangle e_{\lambda}$.
Let us computer the divergence of $Y$.
We have
\begin{align}
div(Y)
=\sum_{\lambda=1}^{2m}[e_{\lambda} \langle \widetilde{\nabla}_{V^{\top}}V^{\top},\pi_{\widetilde H}df(e_{\lambda}) \rangle - \langle \widetilde{\nabla}_{V^{\top}}V^{\top},\pi_{\widetilde H}df(\nabla_{e_{\lambda}}e_{\lambda}) \rangle ] \nonumber
\end{align}
Then
\begin{align}
&\int_{M} \sum_{\lambda=1}^{2m} \langle \widetilde{\nabla}_{\pi_{\widetilde H}df(e_{\lambda})}(\widetilde{\nabla}_{V^{\top}}V^{\top}),\pi_{\widetilde H}df(e_{\lambda}) \rangle  \nonumber \\
&=\int_M[- \langle \widetilde{\nabla}_{V^{\top}}V^{\top},tr_{G_{\theta}} \beta_{H,\widetilde{H}} \rangle -\sum_{\lambda=1}^{2m} \widetilde{\theta}(df(e_{\lambda})) \langle \widetilde{\nabla}_{\widetilde T} (\widetilde{\nabla}_{V^{\top}}V^{\top}),\pi_{\widetilde{H}}df(e_{\lambda}) \rangle ]dV_{\theta} \nonumber
\end{align}
Since $f$ is pseudoharmonic, by (\ref{34}) and Lemma \ref{36} we obtain
\begin{align}
&\int_{M} \sum_{\lambda=1}^{2m} \langle \widetilde{\nabla}_{\pi_{\widetilde H}df(e_{\lambda})}(\widetilde{\nabla}_{V^{\top}}V^{\top}),\pi_{\widetilde H}df(e_{\lambda}) \rangle  \nonumber \\
&=-\int_M \sum_{\lambda=1}^{2m}\widetilde{\theta}(df(e_{\lambda})) \langle \widetilde{\nabla}_{\widetilde T} (\widetilde{\nabla}_{V^{\top}}V^{\top}),\pi_{\widetilde{H}}df(e_{\lambda}) \rangle dV_{\theta} \nonumber \\
&=-\int_M \sum_{\lambda=1}^{2m}\widetilde{\theta}(df(e_{\lambda})) \langle (\widetilde{\nabla}_{\widetilde T}\mathcal{A}^{V^{\top}})(V^{\top}),\pi_{\widetilde H}df(e_{\lambda}) \rangle dV_{\theta} \nonumber \\
&=-\int_M \sum_{\lambda=1}^{2m} \widetilde{\theta}(df(e_{\lambda})) \langle (\overline{\nabla}_{\widetilde T}a)_{V^{\bot}}(V^{\top})-a_{\alpha(\widetilde T,V^{\top})}(V^{\top}),\pi_{\widetilde H}df(e_{\lambda}) \rangle dV_{\theta} \nonumber \\
&=-\int_M \sum_{\lambda=1}^{2m} \widetilde{\theta}(df(e_{\lambda})) \langle (\overline{\nabla}_{\widetilde T}a)_{V^{\bot}}(V^{\top}),\pi_{\widetilde H}df(e_{\lambda}) \rangle dV_{\theta} \nonumber
\end{align}
Next, we may substitute into (\ref{35}) to get
\begin{align}
&\int_M \sum_{\lambda=1}^{2m}\widetilde{R}(V^{\top},\pi_{\widetilde{H}}df(e_{\lambda}),V^{\top},\pi_{\widetilde H}df(e_{\lambda})) \nonumber \\
&=\int _M \sum_{\lambda=1} ^{2m}[- \langle a_{V^{\bot}}^2(\pi_{\widetilde{H}}(df(e_{\lambda}))),\pi_{\widetilde{H}}(df(e_{\lambda})) \rangle - \langle (\overline{\nabla}_{V^{\top}}a)_{V^{\bot}}(\pi_{\widetilde{H}}df(e_{\lambda})),\pi_{\widetilde{H}}df(e_{\lambda}) \rangle
 \nonumber \\
&+ \langle a_{\alpha(V^{\top},V^{\top})}(\pi_{\widetilde{H}}df(e_{\lambda})),\pi_{\widetilde{H}}df(e_{\lambda}) \rangle -\widetilde{\theta}(df(e_{\lambda})) \langle (\overline{\nabla}_{\widetilde T}a)_{V^{\bot}}(V^{\top}),\pi_{\widetilde H}df(e_{\lambda}) \rangle ]dV_{\theta} \label{39}
\end{align}
It follows from (\ref{32}),(\ref{40}) and (\ref{39}) that
\begin{align}
&H_{f}(V^{\top},V^{\top})=\left.\frac{d^2}{dt^2}\right |  _{t=0}E_{H,\widetilde{H}}(f_t)\nonumber \\
&=\int _M \sum_{\lambda=1} ^{2m}[2 \langle a_{V^{\bot}}^2(\pi_{\widetilde{H}}(df(e_{\lambda}))),\pi_{\widetilde{H}}(df(e_{\lambda})) \rangle + \langle (\overline{\nabla}_{V^{\top}}a)_{V^{\bot}}(\pi_{\widetilde{H}}df(e_{\lambda})),\pi_{\widetilde{H}}df(e_{\lambda}) \rangle
 \nonumber \\
&- \langle a_{\alpha(V^{\top},V^{\top})}(\pi_{\widetilde{H}}df(e_{\lambda})),\pi_{\widetilde{H}}df(e_{\lambda}) \rangle +\widetilde{\theta}(df(e_{\lambda})) \langle (\overline{\nabla}_{\widetilde T}a)_{V^{\bot}}(V^{\top}),\pi_{\widetilde H}df(e_{\lambda}) \rangle ]dV_{\theta}
\end{align}
\indent We choose an orthonormal basis $\{V_1,\cdots,V_{2n+2k+1}\}$ of $H_{n+k}$ such that $\{V_1,\cdots,V_{2n+1}\}$ is a basis of $T_xN$ and $\{V_{2n+2},\cdots,V_{2n+2k+1}\}$ is a basis of $T_{x}^{\bot}N$.\\
Then $(\overline{\nabla}_{V_{j}^{\top}}a)_{V_{j}^{\bot}}=0$ and $(\overline{\nabla}_{\widetilde T}a)_{V_{j}^{\bot}}(V_{j}^{\top})=0$
as for each $j$ one of $V_{j}^{\top}|_x$ or $V_{j}^{\bot}|_x$ is zero. \\
Then we obtain
\begin{align}
\sum_{j=1}^{2n+2k+1} H_{f}(V^{\top}_j,V^{\top}_j)
=\int _M &\sum_{\lambda=1} ^{2m}[\sum_{i=2n+2}^{2n+2k+1} 2 \langle a_{V_i}^2(\pi_{\widetilde{H}}(df(e_{\lambda}))),\pi_{\widetilde{H}}(df(e_{\lambda})) \rangle  \nonumber \\
&-\sum_{j=1}^{2n+1} \langle a_{\alpha(V_j,V_j)}(\pi_{\widetilde{H}}df(e_{\lambda})),\pi_{\widetilde{H}}df(e_{\lambda}) \rangle ]dV_{\theta}  \label{95}
\end{align}
By (\ref{26}) a calculation shows that
\begin{align}
 \langle a_{\alpha(V_j,V_j)}(\pi_{\widetilde{H}}df(e_{\lambda})),&\pi_{\widetilde{H}}df(e_{\lambda}) \rangle
= \langle \alpha(\pi_{\widetilde{H}}df(e_{\lambda}),\pi_{\widetilde{H}}df(e_{\lambda})),\alpha(V_j,V_j) \rangle  \nonumber \\
&=\sum_{i=2n+2}^{2n+2k+1} \langle \alpha(\pi_{\widetilde{H}}df(e_{\lambda}),\pi_{\widetilde{H}}df(e_{\lambda})),V_i \rangle  \langle \alpha(V_j,V_j),V_i \rangle  \nonumber \\
&=\sum_{i=2n+2}^{2n+2k+1} \langle a_{V_i}(\pi_{\widetilde{H}}df(e_{\lambda})),\pi_{\widetilde{H}}df(e_{\lambda}) \rangle  \langle a_{V_{i}}V_j,V_j \rangle  \nonumber \\
&=\sum_{i=2n+2}^{2n+2k+1} \langle tr_{g_{\widetilde{\theta}}}a_{V_{i}}\cdot a_{V_i}(\pi_{\widetilde{H}}df(e_{\lambda})),\pi_{\widetilde{H}}df(e_{\lambda}) \rangle  \nonumber
\end{align}
Substitute into (\ref{95}), then
\begin{align}
\sum_{j=1}^{2n+2k+1} H_{f}(V^{\top}_j,V^{\top}_j)
=\int _M &\sum_{\lambda=1} ^{2m}\sum_{i=2n+2}^{2n+2k+1} [2 \langle a_{V_i}^2(\pi_{\widetilde{H}}(df(e_{\lambda}))),\pi_{\widetilde{H}}(df(e_{\lambda})) \rangle  \nonumber \\
&- \langle tr_{g_{\widetilde{\theta}}}a_{V_{i}}\cdot a_{V_i}(\pi_{\widetilde{H}}df(e_{\lambda})),\pi_{\widetilde{H}}df(e_{\lambda}) \rangle
]dV_{\theta} \nonumber \\
=\int _M &\sum_{\lambda=1} ^{2m} \langle P^{N}(\pi_{\widetilde{H}}df(e_{\lambda})),\pi_{\widetilde{H}}df(e_{\lambda}) \rangle
dV_{\theta}. \nonumber
\end{align}
Under  the assumption, $P^{N}$ is negative. We see that at least one $H_f(V_j^{\top},V_j^{\top  })$ must be negative. Then $f$ is unstable.
\end{proof}

\vspace{45pt}

Tian Chong \\
School of Mathematical Science, Fudan University\\
Shanghai 200433, China \\
E-mail: valery4619@sina.com \\

Yuxin Dong \\
School of Mathematical Science, Fudan University\\
Shanghai 200433, China \\
E-mail: yxdong@fudan.edu.cn \\

Yibin Ren \\
School of Mathematical Science, Fudan University\\
Shanghai 200433, China \\
E-mail: allenrybqqm@hotmail.com \\


\begin{thebibliography}{99}




\bibitem{do} Dong Y X. Pseudoharmonic maps between pseudo-Hermitian manifolds, to appear.
\bibitem{dt2006} Dragomir S, Tomassini G. Differential Geometry and Analyis on CR manifolds. Progress in Mathematics, Birkhauser, 2006, 246.
\bibitem{Ee1983} Eells J, Lemaire L. Selected topics in harmonic maps. Conf  Board  Math  Sci, 1983, 50.
\bibitem{Haward1986} Howard R, Wei S W. Nonexistence of stable harmonic maps to and from certain homogeneous spaces and submanifolds of Euclidean space. Trans  Amer  Math  Soc, 1986, 294: 319-331.
\bibitem{leung1982} Leung P F. On the stability of harmonic maps. Lecture Notes in Mathematics, 1982, 949: 122-129.
\bibitem{petit2009} Petit R. Mok-Siu-Yeung type formulas on contact locally sub-symmetric spaces. Ann  Glob  Anal  Geom, 2009, 35: 1-37.
\bibitem{SM1975} Smith R T. The second variation formula for harmonic mappings. Pro  Amer  Math  Soc, 1975, 47: 229-236.
\bibitem{xin1980}Xin Y L. some results on stable harmonic maps. Duke  Math  J, 1980, 47: 609-613.
\end{thebibliography}
\end{document}